\documentclass[a4paper,10pt, oneside]{article}
\usepackage{amsmath,amssymb,amsthm,bm,mathrsfs,mathtools,graphicx}
\usepackage{algorithm,algorithmic}
\usepackage{authblk}
\usepackage[font=small,labelfont=md,textfont=it]{caption}
\usepackage[titletoc, title]{appendix}
\usepackage{enumitem}
\usepackage[colorlinks,linkcolor=black,citecolor=black]{hyperref}
\usepackage{etoolbox}
\usepackage[a4paper,paperheight=297mm,paperwidth=210mm,margin=1.25in]{geometry} 
\usepackage{longtable}
\usepackage{diagbox}
\usepackage{booktabs,makecell,multirow}
\usepackage[capitalise,nameinlink]{cleveref}
\usepackage{cases,color}


\Crefname{figure}{Figure}{Figures}
\crefformat{equation}{\textup{#2(#1)#3}}
\crefrangeformat{equation}{\textup{#3(#1)#4--#5(#2)#6}}
\crefmultiformat{equation}{\textup{#2(#1)#3}}{ and \textup{#2(#1)#3}}
{, \textup{#2(#1)#3}}{, and \textup{#2(#1)#3}}
\crefrangemultiformat{equation}{\textup{#3(#1)#4--#5(#2)#6}}%
{ and \textup{#3(#1)#4--#5(#2)#6}}{, \textup{#3(#1)#4--#5(#2)#6}}{, and \textup{#3(#1)#4--#5(#2)#6}}
\Crefformat{equation}{#2Equation~\textup{(#1)}#3}
\Crefrangeformat{equation}{Equations~\textup{#3(#1)#4--#5(#2)#6}}
\Crefmultiformat{equation}{Equations~\textup{#2(#1)#3}}{ and \textup{#2(#1)#3}}
{, \textup{#2(#1)#3}}{, and \textup{#2(#1)#3}}
\Crefrangemultiformat{equation}{Equations~\textup{#3(#1)#4--#5(#2)#6}}%
{ and \textup{#3(#1)#4--#5(#2)#6}}{, \textup{#3(#1)#4--#5(#2)#6}}{, and \textup{#3(#1)#4--#5(#2)#6}}

\apptocmd{\sloppy}{\hbadness 10000\relax}{}{}

\newcommand{\dual}[1]{\langle {#1} \rangle}

\newcommand{\norm}[1]{\lVert {#1} \rVert}
\newcommand{\normb}[1]{\big\lVert {#1} \big\rVert}

\newcommand{\abs}[1]{\lvert {#1} \rvert}
\newcommand{\ssnm}[1]
{
	\left\vert\kern-0.25ex
	\left\vert\kern-0.25ex
	\left\vert
	{#1}
	\right\vert\kern-0.25ex
	\right\vert\kern-0.25ex
	\right\vert
}

\makeatletter
\def\spher@harm#1{%
	\vbox{\hbox{%
			\offinterlineskip
			\valign{&\hb@xt@2\p@{\hss$##$\hss}\vskip.2ex\cr#1\crcr}%
		}\vskip-.36ex}%
}
\def\gshone{\spher@harm{.}}
\def\gshtwo{\spher@harm{.&.}}
\def\gshthree{\spher@harm{.&.&.}}
\let\gsh\spher@harm
\makeatother

\newtheorem{lemma}{Lemma}[section]
\newtheorem{remark}{Remark}[section]
\newtheorem{theorem}{Theorem}[section]

\numberwithin{equation}{section}
\numberwithin{figure}{section}

\makeatletter\def\@captype{table}\makeatother

\begin{document}

\title{
  \Large\bf Discrete stochastic maximal $ L^p $-regularity and convergence of a spatial
  semidiscretization for a linear stochastic heat equation\thanks{This work was partially supported by National Natural
    Science Foundation of China under grant 12301525 and
    Natural Science Foundation of Sichuan province under grant 2023NSFSC1324.
  }
}


\author[1]{Binjie Li\thanks{libinjie@scu.edu.cn}}
\author[2]{Qin Zhou\thanks{zqmath@cwnu.edu.cn}}
\affil[1]{School of Mathematics, Sichuan University, Chengdu 610064, China}
\affil[2]{School of Mathematics, China West Normal University, Nanchong 637002, China}


\date{}
\maketitle

\begin{abstract}
This study investigates the boundedness of the \( H^\infty \)-calculus for the negative discrete Laplace operator under homogeneous Dirichlet boundary conditions. The discrete operator is implemented using the finite element method, and we establish that the boundedness constant of its \( H^\infty \)-calculus remains uniformly bounded with respect to the spatial mesh size. Based on this result, we derive a discrete stochastic maximal \( L^p \)-regularity estimate for a spatial semidiscretization of a linear stochastic heat equation. Furthermore, within the framework of general spatial \( L^q \)-norms, we provide a nearly optimal pathwise uniform convergence estimate for this semidiscretization.
\end{abstract}

\medskip\noindent{\bf Keywords:}
$H^\infty$-calculus, discrete stochastic maximal $ L^p $-regularity, finite element method

\section{Introduction}
Stochastic partial differential equations (SPDEs) hold significant importance in both pure mathematics and various natural sciences, including physics, chemistry, biology, and engineering; see, e.g., \cite{Flandoli2023,Kolokoltsov2000,Lototsky2017,Pardoux2021,Prato2014}. Among these, stochastic parabolic equations constitute one of the most prominent classes of SPDEs. Over the past three decades, the numerical analysis of stochastic parabolic equations has garnered considerable attention;
see, e.g., \cite{Brehier2019IMA,Gyongy1999,Gyongy2003,Gyongy2009,Gyongy1997,Gyongy2016,Hausenblas2003,
Hutzenthaler2018,Kovacs2018-2,Kruse2014book,LiuQiao2021,Sauer2015,Wangxiaojie2020} (this list is by no means exhaustive).




While significant advancements have been made in the numerical analysis of stochastic parabolic equations,
existing research predominantly focuses on spatial $ L^2 $-norm settings, with remarkably few results available for general spatial $ L^q $-norms.
Foundational work by Cox and van Neerven \cite{Cox2010,Cox2013} laid the groundwork through their analysis of splitting schemes and Euler approximations for stochastic Cauchy problems in UMD Banach spaces. Blömker and Jentzen \cite{Blomker2013} studied Galerkin approximations of a one-dimensional stochastic Burgers equation in the spatial $ L^\infty $-norm framework. Subsequent developments by van Neerven and Veraar \cite{Neerven2022} established pathwise uniform convergence rates for temporal discretizations in 2-smooth Banach spaces, and more recently, Klioba and Veraar \cite{Klioba2024} extended these results to equations with irregular nonlinearities within the same setting.
Very recently, Li and Xie \cite{LiXieLpTime2025} established the discrete stochastic maximal $ L^p $-regularity estimate for the Euler scheme.
Despite these advances, a systematic theory for the convergence of finite element-based spatial discretizations of SPDEs in general Banach spaces remains
notably lacking in the literature. It is this gap that motivates the present study.

The principal contributions of this work are threefold.
First, using Dunford calculus and standard properties of the negative discrete Laplacian $\mathcal{A}_h$, discretized via the linear finite element method, we prove that the boundedness constant of the $H^\infty$-calculus of $\mathcal{A}_h$ is uniform with respect to the mesh size $h$. This result is further used to show that the norms of the purely imaginary powers of $\mathcal{A}_h$, as well as the $\mathcal{R}$-boundedness constant of its resolvent family, are also independent of $h$. These findings are stated in \Cref{thm:Ah-Hinf}.

Second, by combining the $h$-uniform boundedness of the $H^\infty$-calculus of $\mathcal{A}_h$ with \cite[Theorem~3.5]{Neerven2012b}, we establish a discrete stochastic maximal $L^p$-regularity estimate. Specifically, for any $p \in (2,\infty)$ and $q \in [2,\infty)$, there exists a constant $C > 0$ (independent of $h$) such that for all $f_{h} \in L_{\mathbb{F}}^{p}(\Omega \times \mathbb{R}_{+}; \gamma(H, X_{0,h}^{q}))$, the process $y_{h}$ defined by
\[
y_{h}(t) = \int_{0}^{t} e^{(s-t)\mathcal{A}_{h}} f_{h}(s) \,  \mathrm{d}W_{H}(s), \quad t \geqslant 0,
\]
satisfies
\[
\begin{aligned}
\Bigl( \mathbb{E} \sup_{t \geqslant 0} \, \lVert y_{h}(t) \rVert_{(X_{0,h}^{q}, X_{1,h}^{q})_{1/2 - 1/p, p}}^{p} \Bigr)^{1/p} &+ \lVert y_{h} \rVert_{L^{p}(\Omega \times \mathbb{R}_{+}; X_{1/2,h}^{q})} 
\leqslant C \lVert f_{h} \rVert_{L^{p}(\Omega \times \mathbb{R}_{+}; \gamma(H, X_{0,h}^{q}))}.
\end{aligned}
\]
Here, $H$ is a separable Hilbert space, $W_H$ is an $H$-cylindrical Brownian motion, and the scales $X_{\theta,h}^{q}$ are defined below. This estimate extends the deterministic discrete maximal $L^p$-regularity result from \cite{Geissert2006} to stochastic evolution equations and provides a key tool for the numerical analysis of spatial semi-discretizations of nonlinear stochastic parabolic problems. The precise statement is given in
\cref{thm:DSMLP}.

Third, for a spatially semi-discretized linear stochastic heat equation, we prove an optimal error estimate in the norm $\lVert \cdot \rVert_{L^p(\Omega \times \mathbb{R}_{+}; L^q(\mathcal{O}))}$ and a nearly optimal pathwise uniform error estimate in the norm of $L^p(\Omega; \mathrm{C}([0,\infty); L^q(\mathcal{O})))$, where $p \in (2,\infty)$ and $q \in [2,\infty)$. These convergence results are presented in \Cref{thm:conv}. They extend the numerical analysis of spatial semi-discretizations for stochastic evolution equations from the Hilbert space setting (see, e.g., \cite[Section~3.4]{Kruse2014book}) to the Banach space setting.

The remainder of this paper is organized as follows.  
Section \ref{sec:pre} introduces the notational conventions and fundamental concepts that will be used throughout the study.  
Section \ref{sec:main} presents the main results, which include:  
(i) a spatial mesh size-independent bounded $ H^\infty $-calculus for the negative discrete Laplace operator;  
(ii) discrete stochastic maximal $ L^p $-regularity; and  
(iii) a nearly optimal pathwise uniform convergence estimate.  
Section \ref{sec:proof} contains the proofs of these results.  
Finally, Section \ref{sec:conclusion} summarizes the contributions of this work.

\section{Preliminaries}
\label{sec:pre}

\medskip\noindent\textbf{Notational conventions.}
Throughout this paper, we adhere to the following notational conventions.
\begin{itemize}
  \item For any Banach spaces \( E_1 \) and \( E_2 \), the notation \(\mathcal{L}(E_1, E_2)\) represents the space of all bounded linear operators mapping from \( E_1 \) to \( E_2 \). When \( E_1 = E_2 \), we simplify the notation to \(\mathcal{L}(E_1)\).
    The resolvent set of an operator \( B \) is denoted by \(\rho(B)\).
    The identity mapping is denoted by $ I $.
  \item For real numbers $ \alpha \in (0, 1) $ and $ p \in (1, \infty) $, we denote by $ (\cdot, \cdot)_{\alpha, p} $
    the real interpolation space defined via the $ K $-method; see, e.g., \cite[Chapter~1]{Lunardi2018}.
    For $ \theta \in [0, 1] $, the complex interpolation space is denoted by $ [\cdot, \cdot]_\theta $,
    as detailed in \cite[Chapter~2]{Lunardi2018}.
  \item The conjugate exponent of any \( q \in [1,\infty] \), namely $ q/(q-1) $, is represented by \( q' \),
    and the symbol \( i \) stands for the imaginary unit.
  \item For any angle \(\theta \in (0, \pi)\), the sector \(\Sigma_{\theta}\) is defined
    as the set \(\{z \in \mathbb{C} \setminus \{0\} \mid |\operatorname{Arg}(z)| < \theta\}\),
    where \(\operatorname{Arg}(z)\) denotes the principal value of the argument of \(z\) within the interval \((- \pi, \pi]\). The boundary \(\partial\Sigma_{\theta}\) is oriented in such a manner that the positive real axis lies to its left.

  \item Suppose that \( (\Omega, \mathcal{F}, \mathbb{P}) \) is a complete probability space equipped with a filtration \( \mathbb{F} := (\mathcal{F}_t)_{t \geqslant 0} \) satisfying the usual conditions, on which a sequence of independent real-valued \( \mathbb{F} \)-Brownian motions \( (\beta_n)_{n \in \mathbb{N}} \) is defined. Throughout, the expectation operator with respect to the probability space \( (\Omega, \mathcal{F}, \mathbb{P}) \) will be denoted by \( \mathbb{E} \), unless otherwise indicated.

The mathematical content remains unchanged and correct.
  \item Let \( H \) be a separable Hilbert space, endowed with the inner product
    \( (\cdot, \cdot)_H \) and an orthonormal basis \( (h_n)_{n \in \mathbb{N}} \).
    We define the $ H $-cylindrical Brownian motion $ W_H $ such that, for each \( t \in \mathbb{R}_{+} \),
    \( W_H(t) \in \mathcal{L}(H, L^2(\Omega)) \) is given by
    \[
      W_H(t)x := \sum_{n \in \mathbb{N}} \beta_n(t) (x, h_n)_H, \quad \forall x \in H.
    \]
    For a detailed exposition on the theory of stochastic integration with
    respect to the cylindrical Brownian motion \( W_H \),
    we refer the reader to \cite{Neerven2007}.
  \item For any \( p \in (1, \infty) \) and any Banach space \( E \),
    let \( L_\mathbb{F}^p(\Omega \times \mathbb R_{+}; E) \) denote the
    collection of all \( \mathbb{F} \)-adapted \( E \)-valued processes
    that belong to \( L^p(\Omega \times \mathbb R_{+}; E) \).
    Furthermore, let \( \gamma(H, E) \) denote the space of all \( \gamma \)-radonifying
    operators from \( H \) to \( E \) (see \cite[Chapter~9]{HytonenWeis2017}).
  \item Let $\mathcal{O} \subset \mathbb{R}^3$ be a bounded convex domain with a boundary $ \partial\mathcal O $
    of class $ C^{2,0} $ (cf.~\cite[Section~6.2]{Gilbarg2001}). For any $ q \in [1,\infty] $, $ W_0^{1,q}(\mathcal O) $
    and $ W^{2,q}(\mathcal O) $ denote two standard Sobolev spaces (cf.~\cite[Chapter 7]{Gilbarg2001}.
  \item The symbol \( C_{\times} \) represents a positive constant that depends only on its subscript(s).
    Its specific value may vary from one instance to another.
\end{itemize}

\medskip\noindent\textbf{$ \mathcal R $-boundedness.}
Let $E_1$ and $E_2$ be Banach spaces. A family $\mathcal{A} \subset \mathcal{L}(E_1,E_2)$ is
called $\mathcal{R}$-bounded if there exists $C>0$ such that for every $N\in\mathbb{N}$, sequences $(B_n)_{n=1}^N \subseteq \mathcal{A}$, $(x_n)_{n=1}^N \subseteq E_1$, and independent symmetric $\{-1,1\}$-valued random variables $(r_n)_{n=1}^N$,
the following inequality holds:
\[
\mathbb{E} \Bigl\| \sum_{n=1}^N r_n B_n x_n \Bigr\|_{E_2}^2 \leqslant C\, \mathbb{E} \Bigl\| \sum_{n=1}^N r_n x_n \Bigr\|_{E_1}^2.
\]
Here, $\mathbb{E}$ denotes expectation over the probability space supporting $(r_n)_{n=1}^N$.
The smallest such constant $C$ is termed the $\mathcal R$-boundedness constant of $ \mathcal A $.
For further foundational results, we refer to \cite[Chapter 4]{Pruss2016}.


\medskip\noindent\textbf{$ H^\infty $-calculus.}
Let $ A $ be a sectorial operator on a Banach space $ E $.
For any $ \theta \in (0,\pi) $, we say that $ A $ admits a bounded $ H^\infty(\Sigma_\theta) $-calculus
if there exist constant $ C > 0 $ such that
$$
  \left\lVert \varphi(A) \right\rVert_{\mathcal{L}(E)} \leqslant C \sup_{z \in \Sigma_\theta} |\varphi(z)|
$$
for all $ \varphi \in \mathcal{H}_0^\infty(\Sigma_\theta) $, where the operator $ \varphi(A) $ is defined via the Dunford integral
$$
  \varphi(A) := \frac{1}{2\pi i} \int_{\partial \Sigma_\theta} \varphi(z) (z - A)^{-1} \, \mathrm{d}z,
$$
and the function space $ \mathcal{H}_0^\infty(\Sigma_\theta) $ consists of all analytic functions $ \varphi \colon \Sigma_\theta \to \mathbb{C} $ for which there exists some $ \varepsilon > 0 $ satisfying
$$
  \sup_{z \in \Sigma_\theta}
  \left( \frac{1 + | z |^2}{| z |} \right)^\varepsilon |\varphi(z)| < \infty.
$$
The smallest such constant $ C $ is called the boundedness constant of the $ H^\infty(\Sigma_\theta) $-calculus for $ A $.
For any $ \varphi \in \mathcal{H}_0^\infty(\Sigma_\theta) $, we equip this space with the norm
$$
  \|\varphi\|_{\mathcal{H}_0^\infty(\Sigma_{\theta})} :=
  \sup_{z \in \Sigma_\theta} |\varphi(z)|.
$$
We refer the reader to \cite[Chapter~10]{HytonenWeis2017} for a comprehensive treatment of $ H^\infty $-calculus.

\medskip\noindent\textbf{Definition of $A_q$.}
For any \(q \in (1, \infty)\), we denote by \(A_q\) the realization of the negative Laplacian \(-\Delta\) in \(L^q(\mathcal{O})\) with homogeneous Dirichlet boundary conditions.
The space \(L^q(\mathcal{O})\) is denoted by \(X_0^q\). For each \(\alpha \in (0, 1]\), \(X_\alpha^q\) represents the domain of \(A_q^\alpha\), equipped with the norm  
\[
\|v\|_{X_\alpha^q} := \|A_q^\alpha v\|_{X_0^q}, \quad \forall v \in X_\alpha^q.
\]  
For each \(\alpha \in [0, 1]\), \(X_{-\alpha}^{q'}\) denotes the dual space of \(X_\alpha^q\). It is well-known that \(A_q\) uniquely extends to a bounded linear operator from \(X_\alpha^q\) to \(X_{\alpha-1}^q\), satisfying  
\[
\|A_q v\|_{X_{\alpha-1}^q} = \|v\|_{X_\alpha^q}, \quad \forall v \in X_\alpha^q.
\]  
The space $ X_1^q $ coincides with $ W_0^{1,q}(\mathcal{O}) \cap W^{2,q}(\mathcal{O}) $,
and its norm is equivalent to the standard norm on $ W^{2,q}(\mathcal{O}) $ (see, e.g., \cite[Theorem~9.15]{Gilbarg2001}).
By \cite[Theorem~2]{Duong1996} and \cite[Theorem~16.5]{Yagi2010},
for any \(0 < \alpha < 1\), \(X_\alpha^q\) coincides with the complex interpolation space \([X_0^q, X_1^q]_\alpha\),
with equivalent norms. Furthermore, by \cite[Theorem~16.15]{Yagi2010}, \(X_{1/2}^q\) corresponds to the Sobolev space \(W_0^{1,q}(\mathcal{O})\), ensuring the validity of the identity  
\begin{equation}
  \label{eq:Aq-W12}
\langle A_q u, v \rangle = \int_{\mathcal{O}} \nabla u \cdot \nabla v \, \mathrm{d}x, \quad \text{for all } u \in X_{1/2}^q \text{ and } v \in X_{1/2}^{q'}.
\end{equation}
where \(\langle \cdot, \cdot \rangle\) denotes the duality pairing between \(X_{-1/2}^q\) and \(X_{1/2}^{q'}\).



\medskip\noindent\textbf{Definition of $\mathcal{A}_h$.}
Let \( \mathcal{K}_h \) be a conforming quasi-uniform tetrahedral mesh of
the domain \( \mathcal{O} \), where \( h \) denotes the maximum diameter of the elements in $ \mathcal K_h $.
We assume the triangulation satisfies the vertex compatibility condition: each vertex on \( \partial\mathcal{O}_h \) belongs to \( \partial\mathcal{O} \), where \( \mathcal{O}_h \) denotes the polyhedral domain induced by \( \mathcal{K}_h \). 
Define the finite element space
\[
  X_h := \left\{ v_h \in C(\overline{\mathcal{O}}) \,:\, v_h|_{\overline{\mathcal{O}}\setminus\mathcal{O}_h} = 0,
  \, \text{and $v_h$ is linear on each $ K \in \mathcal{K}_h$} \right\}.
\]
We consider the following quantities:
\begin{align*}
  \varrho_1 := \max_{K \in \mathcal{K}_h} \frac{h_K}{\rho_K}, \quad
  \varrho_2 := \frac{\max_{K \in \mathcal{K}_h} h_K}{\min_{K \in \mathcal{K}_h} h_K}, \quad
  \varrho := \{\varrho_1, \varrho_2\},
\end{align*}
where \( h_K \) and \( \rho_K \) represent the circumradius and inradius of
the element \( K \), respectively.
Let \( P_h \) denote the \( L^2(\mathcal{O}) \)-orthogonal projection operator onto \( X_h \).
This projection operator admits an extension to the space \( X_{-\alpha}^q \) for any \( \alpha \in (0, \frac{1}{2}] \) and \( q \in (1, \infty) \), which satisfies
\[
  \int_{\mathcal{O}} (P_hv) v_h \, \mathrm{d}x  
  = \langle v, \, v_h \rangle 
\]
for all \( v \in X_{-\alpha}^q \) and \( v_h \in X_h \).
Here, \(\langle \cdot, \cdot \rangle\) denotes the duality pairing between \(X_{-\alpha}^q\) and \(X_{\alpha}^{q'}\),
and we note that \(X_h \subset W_0^{1,q'}(\mathcal{O})\) implies \(X_h \subset X_\alpha^{q'}\) for all \(\alpha \in [0, 1/2]\).
For any \( u_h \in X_h \), the negative discrete Laplacian \( \mathcal{A}_h u_h \in X_h \) is defined by
\[
  \int_{\mathcal{O}} (\mathcal{A}_h u_h) \cdot v_h \, \mathrm{d}x
  = \int_{\mathcal{O}} \nabla u_h \cdot \nabla v_h \, \mathrm{d}x \quad \forall v_h \in X_h.
\]
It is evident that \( \mathcal{A}_h \) is a symmetric and positive-definite operator on \( X_h \),
where \(X_h\) is equipped with the $ L^2(\mathcal O)$-norm.
For any $ \alpha \in \mathbb R $ and $ q \in (1,\infty) $, let
$ X_{\alpha,h}^q $ denote the Banach space $ X_h $ equipped with the norm
\[
  \| v_h \|_{X_{\alpha,h}^q} := \| \mathcal{A}_h^\alpha v_h \|_{X_0^q}, \quad \forall v_h \in X_h.
\]
For further properties of \( \mathcal{A}_h \), we refer the reader to \cite{Bakaev2002} and the theory developed in \cite[Chapter~8]{Brenner2008}.

\section{Main Results}
\label{sec:main}

The first main result establishes key operator-theoretic properties of $\mathcal{A}_h$, as summarized in the following theorem.

\begin{theorem}
  \label{thm:Ah-Hinf}
  Let $ \theta \in (0,\pi/2) $ and $ q \in (1,\infty) $. The negative discrete Laplace operator
  $\mathcal{A}_h$ satisfies the following properties:
  \begin{enumerate}
    \item[\rm{(i)}] For any $ \phi \in \mathcal{H}_0^\infty(\Sigma_\theta) $,
      the following inequality holds:
      \begin{equation}
        \norm{\phi(\mathcal{A}_h)}_{\mathcal{L}(X^q_{0,h})} \leqslant
        C_{\theta,q,\varrho,\mathcal{O}} \norm{\phi}_{\mathcal{H}_0^\infty(\Sigma_\theta)}.
        \label{eq:Ah-Hinf}
      \end{equation}
    \item[\rm{(ii)}] For any $ t \in \mathbb{R} $, it holds that
      \begin{equation}
        \norm{\mathcal{A}_h^{it}}_{\mathcal{L}(X_{0,h}^q)}
        \leqslant C_{\theta,q,\varrho,\mathcal{O}} e^{\theta\abs{t}}.
        \label{eq:Ah-BIP}
      \end{equation}
    \item[\rm{(iii)}] The operator family $ \{z(z - \mathcal{A}_h)^{-1} \}_{z \in \mathbb{C} \setminus \overline{\Sigma_\theta}} $
      is $\mathcal{R}$-bounded on $ X_{0,h}^q $,
      and its $\mathcal{R}$-boundedness constant is uniformly bounded with respect to $ h $.
  \end{enumerate}
\end{theorem}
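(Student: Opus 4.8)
The plan is to establish part~(i) first and then deduce parts~(ii) and~(iii) from it. The starting observation for~(i) is that $\mathcal{A}_h$ is self-adjoint and positive definite on the finite-dimensional space $X_h$ endowed with the $L^2(\mathcal{O})$-inner product, so by the spectral theorem it possesses a bounded $H^\infty(\Sigma_\sigma)$-calculus on $X_{0,h}^2$ for every $\sigma\in(0,\pi)$ with boundedness constant at most~$1$, hence independent of~$h$. The entire difficulty is to turn this $L^2$-estimate into the $L^q$-estimate \cref{eq:Ah-Hinf} with a constant that does not blow up as $h\to0$; since $X_h$ is finite-dimensional, $\norm{\phi(\mathcal{A}_h)}_{\mathcal{L}(X_{0,h}^q)}$ is automatically finite for each fixed $h$, and the content of~(i) is precisely the uniformity in $h$.

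To obtain the uniform bound I would show that the operator family $(e^{-t\mathcal{A}_h}P_h)_{t>0}$, viewed as acting on $L^q(\mathcal{O})$, is represented by an integral kernel $k_h(t,\cdot,\cdot)$ obeying Gaussian upper bounds
\[
  \abs{k_h(t,x,y)}\leqslant C_{\varrho,\mathcal{O}}\,t^{-3/2}\exp\!\bigl(-\abs{x-y}^2/(C_{\varrho,\mathcal{O}}\,t)\bigr),\qquad t>0,\ x,y\in\mathcal{O},
\]
with constants independent of~$h$. Granting this, \cref{eq:Ah-Hinf} follows from the general transference principle that an operator which is sectorial of angle~$0$ with a bounded $H^\infty$-calculus on $L^2(\mathcal{O})$ and whose semigroup satisfies Gaussian bounds admits a bounded $H^\infty(\Sigma_\theta)$-calculus on $L^q(\mathcal{O})$ for every $\theta\in(0,\pi)$ and every $q\in(1,\infty)$, with boundedness constant depending only on $\theta$, $q$, the spatial dimension, and the Gaussian constants (see \cite{Duong1996} and \cite[Chapter~10]{HytonenWeis2017}); the resulting $L^q(\mathcal{O})$-bound for the kernel operator $\phi(\mathcal{A}_h)P_h$ restricts to the desired bound for $\phi(\mathcal{A}_h)$ on $X_{0,h}^q$, and the dependence of the constant collapses to $C_{\theta,q,\varrho,\mathcal{O}}$.

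The Gaussian bounds themselves would be proved by a variant of the classical Davies exponential-weight method adapted to the finite element setting: the on-diagonal estimate $\norm{e^{-t\mathcal{A}_h}P_h}_{\mathcal{L}(L^1(\mathcal{O}),L^\infty(\mathcal{O}))}\leqslant C_{\varrho,\mathcal{O}}\,t^{-3/2}$ is derived from a Nash-type argument for finite element functions together with the $L^p$-stability of $P_h$ on quasi-uniform meshes (\cite[Chapter~8]{Brenner2008}), while the Gaussian off-diagonal factor is recovered by conjugating the discrete form with a bounded Lipschitz exponential weight $e^{\lambda\psi}$ and optimizing over $\lambda$, as in the continuous Davies--Gaffney argument. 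The resolvent and smoothing estimates for $\mathcal{A}_h$ that make these computations rigorous are those collected in \cite{Bakaev2002}, and it is exactly here that the dependence on the shape-regularity parameters $\varrho_1,\varrho_2$ enters, through inverse inequalities and the $L^p$-boundedness of $P_h$.

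Once part~(i) is available, part~(ii) follows by extending the bounded $H^\infty(\Sigma_\theta)$-calculus, via the convergence lemma \cite[Chapter~10]{HytonenWeis2017} (applicable since $\mathcal{A}_h$ is invertible on the finite-dimensional $X_h$), to the function $z\mapsto z^{it}$, for which $\sup_{z\in\Sigma_\theta}\abs{z^{it}}=e^{\theta\abs{t}}$; this yields \cref{eq:Ah-BIP}. For part~(iii) one invokes part~(i) with an auxiliary angle $\theta'\in(0,\theta)$: since $L^q(\mathcal{O})$ has Pisier's property~$(\alpha)$ for $q\in(1,\infty)$, the Kalton--Weis theorem (\cite[Chapter~10]{HytonenWeis2017}) upgrades the bounded $H^\infty(\Sigma_{\theta'})$-calculus of $\mathcal{A}_h$ to $\mathcal{R}$-boundedness of $\{\phi(\mathcal{A}_h):\norm{\phi}_{\mathcal{H}_0^\infty(\Sigma_{\theta'})}\leqslant1\}$ with $\mathcal{R}$-bound controlled by the constant in \cref{eq:Ah-Hinf}, and specializing to the functions $z\mapsto z/(z-w)$ with $w\in\mathbb{C}\setminus\overline{\Sigma_\theta}$, which form a subset of $H^\infty(\Sigma_{\theta'})$ of norm at most $C_{\theta,\theta'}$, gives the asserted $h$-uniform $\mathcal{R}$-boundedness of $\{w(w-\mathcal{A}_h)^{-1}\}_{w\in\mathbb{C}\setminus\overline{\Sigma_\theta}}$. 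The main obstacle is clearly the $h$-uniform Gaussian bound for the discrete heat semigroup: the on-diagonal part is by now routine, but producing the off-diagonal Gaussian decay with constants that do not degenerate as $h\to0$ requires careful control of the mesh geometry, and this step — rather than the essentially mechanical functional-calculus arguments that follow — constitutes the technical heart of the theorem.
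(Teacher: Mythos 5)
Your treatments of parts (ii) and (iii) coincide with the paper's (Steps 6 and 7 of its proof): one extends the calculus to $z\mapsto z^{it}$ by a regularization and convergence argument, and one upgrades the $H^\infty(\Sigma_{\theta'})$-calculus to $\mathcal R$-boundedness of the resolvent family via Pisier's contraction property and \cite[Theorem~10.3.4]{HytonenWeis2017}. The divergence is entirely in part (i), and there your argument has a genuine gap.

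The gap is the asserted $h$-uniform Gaussian upper bound for the kernel of $e^{-t\mathcal{A}_h}P_h$ valid for \emph{all} $t>0$. This bound is false for $t\ll h^2$: as $t\to0^+$ the kernel $k_h(t,x,y)$ converges to the kernel of $P_h$, which on a quasi-uniform mesh decays only exponentially at the mesh scale, $\abs{K_{P_h}(x,y)}\lesssim h^{-3}e^{-c\abs{x-y}/h}$, and is generically nonzero for fixed $x\neq y$, whereas your Gaussian majorant $t^{-3/2}\exp(-\abs{x-y}^2/(Ct))$ tends to zero as $t\to0^+$. The same obstruction (spatial decay of finite element operators saturates at scale $h$) undermines the Davies exponential-weight step, since $X_h$ is not invariant under multiplication by $e^{\lambda\psi}$ and the commutator errors incurred by projecting back are of order $\lambda h$, which cannot be absorbed when the optimization forces $\lambda\sim\abs{x-y}/t\gg h^{-1}$. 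The results actually available (Schatz--Thom\'ee--Wahlbin, Bakaev--Thom\'ee--Wahlbin) give Gaussian-type decay only for $t\gtrsim h^2$, or weaker sub-Gaussian decay, so the Duong--McIntosh transference cannot be invoked off the shelf; you would at minimum need a version based on generalized Gaussian estimates together with a separate treatment of the regime $t\lesssim h^2$, none of which is sketched. The paper avoids kernel estimates entirely: it introduces the bounded, invertible auxiliary operator $\widetilde A_q=(h^2+A_q)(1+h^2A_q)^{-1}$, transports the $H^\infty$-calculus of $A_q$ to $\widetilde A_q$ through the M\"obius substitution $\lambda\mapsto(h^2+\lambda)/(1+h^2\lambda)$, and then bounds $\phi(\mathcal{A}_h)P_h-\phi(\widetilde A_q)$ by a Dunford integral over a contour truncated at $\abs{z}\sim c^*h^{-2}$, where the $O(h^2)$ resolvent comparison of \cref{lem:z-Ah} exactly offsets the $O(h^{-2})$ contour length. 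To salvage your route you must either prove the kernel bounds in a form that is actually attainable (for $t\geqslant h^2$ only, with a separate small-time argument) or switch to a perturbative comparison of the kind the paper uses.
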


Property~(i) of \cref{thm:Ah-Hinf} is essential for establishing the discrete stochastic maximal $ L^p $-regularity for semi-discrete and fully discrete finite element approximations of SPDEs, and it plays a particularly critical role in the fully discrete setting.
Let $ p \in (2,\infty) $, $ q \in [2,\infty) $, and fix $ T > 0 $. Define the time step $ \tau := T/J $ for a positive integer $ J $. Let $ (Y_j)_{j=0}^J $ denote the solution to the fully discrete Euler scheme:
\[
\begin{cases}
  Y_{j+1} - Y_j + \tau \mathcal{A}_h Y_{j+1} = \displaystyle \int_{j\tau}^{j\tau+\tau} f_h(t)  \, \mathrm{d}W_H(t), & 0 \leqslant j < J, \\[0.5em]
    Y_0 = 0.
\end{cases}
\]
Here, $ f_h \in L_\mathbb{F}^p(\Omega \times (0,T); \gamma(H, X_{0,h}^q)) $. By Remark~3.11 of \cite{LiXieLpTime2025},
we obtain the discrete stochastic maximal $ L^p $-regularity estimate
\[
\left( \mathbb{E} \left[ \tau \sum_{j=1}^J \| \mathcal{A}_h^{1/2} Y_j \|_{X_{0,h}^q}^p \right] \right)^{\!1/p} \leqslant C_{p,q,\varrho,\mathcal{O}} \| f_h \|_{L^p(\Omega \times (0,T); \gamma(H, X_{0,h}^q))}.
\]
Furthermore, following Remark~4.2 in \cite{LiXieLpTime2025}, the following discrete maximal inequality holds:
\begin{align*}
& \left( \mathbb{E} \left[ \max_{1 \leqslant j \leqslant J}
\| Y_j \|_{X_{0,h}^q}^p \right] \right)^{\!1/p} \\
  \leqslant{} &
  C_{\alpha,p,q,\varrho,\mathcal{O}} \left( \| \mathcal{A}_h^{-\alpha} f_h \|_{L^p(\Omega \times (0,T); \gamma(H, X_{0,h}^q))} + \tau^{\frac{1}{2} - \frac{1}{p}} \| f_h \|_{L^p(\Omega \times (0,T); \gamma(H, X_{0,h}^q))} \right),
\end{align*}
for any $ \alpha \in (0, \frac{1}{2} - \frac{1}{p}) $. These estimates are fundamental for the numerical analysis of nonlinear stochastic parabolic equations.

Property~(ii) of \cref{thm:Ah-Hinf} enables the application of \cite[Theorem~4.17]{Lunardi2018}, ensuring the norm equivalence
\[
    c_0 \| v_h \|_{X_{\theta,h}^q} \leqslant \| v_h \|_{[X_{0,h}^q, X_{1,h}^q]_\theta} \leqslant c_1 \| v_h \|_{X_{\theta,h}^q} \quad \forall v_h \in X_h,
\]
between the complex interpolation space $[X_{0,h}^q, X_{1,h}^q]_\theta$ and $X_{\theta,h}^q$ for all $\theta \in (0,1)$ and $q \in (1,\infty)$, with constants $c_0, c_1 > 0$ independent of $h$. This equivalence is essential for analyzing SPDEs with low spatial regularity.
For the linear finite element method, \cite[pp.~921--922]{Kemmochi2018} established Property~(ii) under an acuteness condition on the spatial mesh.

Property~(iii) of \cref{thm:Ah-Hinf} enables the application of Weis' fundamental result \cite[Theorem~4.2]{Weis2001}, establishing the discrete maximal $ L^p $-regularity estimate:
\[
  \left\|\mathcal{A}_h \int_0^{(\cdot)} e^{(s-(\cdot))\mathcal{A}_h} f_h(s) \, \mathrm{d}s\right\|_{L^p(\mathbb{R}_+; X_{0,h}^q)} 
  \leqslant
  C_{p,q,\varrho,\mathcal{O}} \|f_h\|_{L^p(\mathbb{R}_+; X_{0,h}^q)},
\]
for all $ f_h \in L^p(\mathbb{R}_+; X_{0,h}^q) $ with $ p, q \in (1,\infty) $,
where $ (e^{-t\mathcal{A}_h})_{t \geqslant 0} $ denotes the analytic semigroup generated by $ -\mathcal{A}_h $.
This estimate was originally established by Geissert \cite{Geissert2006} through detailed kernel estimates for the semigroup $ (e^{-t\mathcal{A}_h})_{t \geqslant 0} $. Our approach employs $ H^\infty $-calculus theory instead, providing an alternative proof.

The second main result establishes the discrete stochastic maximal $L^p$-regularity estimate for the stochastic convolution with the semigroup $ (e^{-t\mathcal{A}_h})_{t \geqslant 0} $.

\begin{theorem}
  \label{thm:DSMLP}
  Let $ p \in (2, \infty) $ and $ q \in [2, \infty) $.
  For any $ f_h \in L^p_\mathbb{F}(\Omega \times \mathbb{R}_+; \gamma(H, X_{0,h}^q)) $, the following discrete stochastic maximal $L^p$-regularity estimate holds:
  \begin{equation}
    \label{eq:DSMLP}
    \left( \mathbb{E} \sup_{t \in \mathbb{R}_+} \, \norm{y_h(t)}_{(X_{0,h}^q, X_{1,h}^q)_{{1/2 - 1/p}, {p}}}^p \right)^{\!1/p}
    + \norm{y_h}_{L^p(\Omega \times \mathbb{R}_+; X_{1/2,h}^q)} 
    \leqslant C_{p,q,\varrho,\mathcal{O}} \, \norm{f_h}_{L^p(\Omega \times \mathbb{R}_+; \gamma(H, X_{0,h}^q))},
  \end{equation}
  where
  \[
  y_h(t) := \int_0^t e^{(s - t)\mathcal{A}_h} f_h(s) \, \mathrm{d}W_H(s), \quad t \in \mathbb{R}_+.
  \]
\end{theorem}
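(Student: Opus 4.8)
The plan is to deduce \eqref{eq:DSMLP} directly from the abstract stochastic maximal $L^p$-regularity theorem \cite[Theorem~3.5]{Neerven2012b}, applied to the operator $\mathcal{A}_h$ on the Banach space $X_{0,h}^q$; the whole issue is to check that every constant furnished by that theorem can be taken independent of $h$. Recall the shape of that result: if $E$ is a Banach space with suitable geometry — a closed subspace of an $L^q$-space with $q \in [2,\infty)$ suffices, in particular $E$ is then UMD and has type $2$ — and if $A$ is a sectorial operator on $E$ admitting a bounded $H^\infty$-calculus of angle strictly less than $\pi/2$ whose semigroup is exponentially stable, then for every $p \in (2,\infty)$ the stochastic convolution $t \mapsto \int_0^t e^{(s-t)A}f(s)\,\mathrm{d}W_H(s)$ of a process $f \in L^p_{\mathbb F}(\Omega \times \mathbb R_{+};\gamma(H,E))$ has a pathwise continuous version taking values in $(E,\mathrm{D}(A))_{1/2-1/p,p}$ and satisfies simultaneously the path-uniform bound in that trace space and the $L^p$-in-time bound in $\mathrm D(A^{1/2})$, with a constant depending only on $p$, on the geometric constants of $E$, and on the angle, the boundedness constant of the $H^\infty$-calculus, and the decay rate of the semigroup.

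The next step is to verify these hypotheses for the pair $(\mathcal{A}_h, X_{0,h}^q)$ with constants uniform in $h$. Geometry: $X_h$ normed by $\norm{\cdot}_{L^q(\mathcal O)}$ is a closed subspace of $L^q(\mathcal O)$, hence inherits the UMD property and type $2$ (and property $(\alpha)$) with constants no larger than those of $L^q(\mathcal O)$, which are independent of $h$; should \cite[Theorem~3.5]{Neerven2012b} be stated only for $L^q$-spaces, one transfers instead through the extension of the $L^2$-projection $P_h$, which is a bounded projection of $L^q(\mathcal O)$ onto $X_h$ with $\norm{P_h}_{\mathcal L(L^q(\mathcal O))} \leqslant C_{q,\varrho}$ uniformly in $h$ by the classical $L^q$-stability of the $L^2$-projection on quasi-uniform meshes. $H^\infty$-calculus: fixing any $\theta \in (0,\pi/2)$, \Cref{thm:Ah-Hinf}(i) gives $\norm{\phi(\mathcal{A}_h)}_{\mathcal L(X_{0,h}^q)} \leqslant C_{\theta,q,\varrho,\mathcal O}\norm{\phi}_{\mathcal H_0^\infty(\Sigma_\theta)}$, so $\mathcal{A}_h$ is sectorial on $X_{0,h}^q$ with $H^\infty$-angle at most $\theta < \pi/2$ and $h$-independent bound. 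Stability: each $v_h \in X_h$, extended by zero, lies in $W_0^{1,2}(\mathcal O)$, so the Rayleigh quotient yields $\mathcal{A}_h \geqslant \lambda_1(\mathcal O)\,I$ on $X_h$, where $\lambda_1(\mathcal O) > 0$ is the first Dirichlet eigenvalue of $-\Delta$ on $\mathcal O$; thus the spectrum of $\mathcal{A}_h$ lies in $[\lambda_1(\mathcal O),\infty)$ uniformly in $h$, and combined with the uniform sectorial resolvent bounds this makes $-\mathcal{A}_h$ generate a uniformly exponentially stable analytic semigroup, so \cite[Theorem~3.5]{Neerven2012b} applies on the entire half-line $\mathbb R_{+}$ with $h$-independent constants. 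Finally, since $\mathcal{A}_h$ is symmetric and positive-definite on the finite-dimensional space $X_h$ it is boundedly invertible, so $0 \in \rho(\mathcal{A}_h)$, the scales $X_{\alpha,h}^q = \mathrm D(\mathcal{A}_h^\alpha)$ are unambiguous, the interpolation space $(X_{0,h}^q,X_{1,h}^q)_{1/2-1/p,p}$ is well defined, and there are no technical issues with injectivity, dense range, or the behaviour of the calculus near $0$.

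Putting these together, \cite[Theorem~3.5]{Neerven2012b} applied to $\mathcal{A}_h$ on $X_{0,h}^q$ yields a pathwise continuous version of $y_h$ with values in $(X_{0,h}^q,X_{1,h}^q)_{1/2-1/p,p}$ together with the estimate \eqref{eq:DSMLP}, whose constant inherits its dependence solely from $p$, the geometric constants of $L^q(\mathcal O)$, and the $H^\infty$- and spectral data of $\mathcal{A}_h$, hence is of the asserted form $C_{p,q,\varrho,\mathcal O}$ and independent of $h$. I do not expect a serious obstacle in this argument: the hard analytic content — the mesh-independent bounded $H^\infty$-calculus of $\mathcal{A}_h$ — has already been isolated as \Cref{thm:Ah-Hinf}(i), so what remains is essentially to invoke the abstract theorem and to keep careful track of the constants, the one spot where an $h$-dependence could slip in unnoticed being the exponential decay rate entering the half-line version, which is precisely why the uniform spectral gap above is needed. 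The only other point genuinely worth checking is the applicability of \cite[Theorem~3.5]{Neerven2012b} to $X_{0,h}^q$, a closed subspace of $L^q(\mathcal O)$ rather than $L^q(\mathcal O)$ itself; this is settled either by the stated generality of that theorem or, as indicated above, by transference through the uniformly $L^q$-bounded projection $P_h$.
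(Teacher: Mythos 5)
Your proposal is correct and follows essentially the same route as the paper: both deduce \eqref{eq:DSMLP} from \cite[Theorem~3.5]{Neerven2012b} applied to $\mathcal{A}_h$ on $X_{0,h}^q$, using \cref{thm:Ah-Hinf}(i) for the $h$-uniform $H^\infty$-calculus, the closed-subspace-of-$L^q(\mathcal{O})$ structure for the $h$-uniform geometric hypotheses, and a uniform spectral gap for the half-line estimate. The only cosmetic differences are that the paper explicitly verifies the $\mathcal{R}$-boundedness of the family $\mathscr{I}=\{\mathcal{J}_h(r)\mid r>0\}$ via the $\gamma$-Fubini isomorphism and \cite[Theorem~3.1]{Neerven2012} (which you subsume under ``a closed subspace of an $L^q$-space suffices''), while you spell out the Rayleigh-quotient lower bound $\mathcal{A}_h\geqslant\lambda_1(\mathcal{O})I$ that the paper dismisses as standard.
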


As a concrete application of \cref{thm:DSMLP}, for any $ p \in (2,\infty) $ and $ q \in [2,\infty) $,
the following estimate holds:
\begin{align*}
  &\mathbb{E} \bigg[
    \sup_{t \in \mathbb{R}_+} \, \norm{y_h(t)}_{(X_{0,h}^q,X_{1,h}^q)_{1/2-1/p,p}}^p
    + \int_{\mathbb{R}_+} \|\mathcal{A}_h^{1/2} y_h(t)\|_{L^q(\mathcal{O})}^p \,\mathrm{d}t \bigg] \\
  \leqslant{}
  & C_{p,q,\varrho,\mathcal{O}} \,
  \mathbb{E} \bigg[ \int_{\mathbb{R}_+} \Big\| \Big( \sum_{n\in\mathbb{N}} |f_{h,n}(t)|^2 \Big)^{1/2} \Big\|_{L^q(\mathcal{O})}^p \,\mathrm{d}t \bigg],
\end{align*}
where $ (f_{h,n})_{n\in\mathbb{N}} $ is a sequence of processes in $ L_\mathbb{F}^p(\Omega\times\mathbb{R}_+;X_{0,h}^q) $ such that the right-hand side is finite. The process $ y_h $ is defined by the stochastic convolution
\[
y_h(t) := \sum_{n\in\mathbb{N}} \int_0^t e^{(s-t)\mathcal{A}_h} f_{h,n}(s) \, \mathrm{d}\beta_n(s), \quad t \geqslant 0,
\]
with $ (\beta_n)_{n\in\mathbb{N}} $ being a sequence of independent real-valued Brownian motions as introduced in \cref{sec:pre}. 

\begin{remark}
 We provide a direct application of the discrete stochastic maximal $ L^p $-regularity estimate (Theorem~\ref{thm:DSMLP}) to the numerical analysis of nonlinear stochastic parabolic equations. Fix a terminal time $ T > 0 $, and let $ (\beta_n)_{n \in \mathbb{N}} $ be independent real-valued Brownian motions as defined in Section~\ref{sec:pre}. Consider continuous functions $ (f_n)_{n \in \mathbb{N}} : \mathcal{O} \times \mathbb{R} \to \mathbb{R} $ satisfying the uniform growth condition
 \[
   \left( \sum_{n\in\mathbb N} |f_n(x, y)|^2 \right)^{1/2} \leqslant C(1 + |y|), \quad \forall x \in \mathcal O, \, \forall y \in \mathbb{R},
 \]
 where $ C > 0 $ is a constant independent of $ x $ and $ y $. The spatial semidiscretization is given by
 \[
   \begin{cases}
     \mathrm{d} y_h(t) + \mathcal{A}_h y_h(t) \, \mathrm{d} t = \displaystyle\sum_{n\in\mathbb N} P_h f_n(y_h(t)) \, \mathrm{d} \beta_n(t), & t \in [0, T], \\
     y_h(0) = 0,
   \end{cases}
 \]
 where $ f_n(y_h(t)) $ is an abbreviation for the function $ f_n(\cdot, y_h(t)) $. Fix $ p, q \in (2, \infty) $ such that $ (1 - 2/p)q > 3 $.  
 By Theorem~\ref{thm:DSMLP}, the stability of $ P_h $ on $ X_0^q $ (see Lemma~\ref{lem:Ph}(i)),
 the ideal property of $ \gamma $-radonifying operators \cite[Theorem~9.1.10]{HytonenWeis2017},
 the $ \gamma $-Fubini isomorphism theorem \cite[Theorem~9.4.8]{HytonenWeis2017},
 and the $ h $-uniform continuous embedding (whose verification is left to the reader)
 \[
   \big( X_{0,h}^q, X_{1,h}^q \big)_{1/2 - 1/p, p} \hookrightarrow L^\infty(\mathcal{O}),
 \]
 we obtain for any $ t \in [0, T] $:
 \begin{align*}
   \mathbb{E} \sup_{s \in [0,t]} \, \| y_h(s) \|_{L^\infty(\mathcal{O})}^p
   &\leqslant c \, \mathbb{E} \int_0^t \Big\| \Big( \sum_{n\in\mathbb N} |f_n(y_h(s))|^2 \Big)^{1/2} \Big\|_{L^q(\mathcal{O})}^p  \mathrm{d} s \\
   &\leqslant c \, \mathbb{E} \int_0^t \left( 1 + \sup_{r \in [0,s]} \, \| y_h(r) \|_{L^q(\mathcal{O})}^p \right) \mathrm{d} s \\
   &\leqslant c \, \mathbb{E} \int_0^t \left( 1 + \sup_{r \in [0,s]} \, \| y_h(r) \|_{L^\infty(\mathcal{O})}^p \right) \mathrm{d} s,
 \end{align*}
 where we used the boundedness of $ \mathcal{O} $,
 and $ c > 0 $ denotes a generic constant independent of $ h $ and $ t \in [0,T] $. An application of Gronwall’s inequality then yields the $ h $-uniform bound
 \[
   \sup_{h > 0} \,  \mathbb{E} \left[ \sup_{t \in [0,T]} \| y_h(t) \|_{L^\infty(\mathcal{O})}^p \right] < \infty.
 \]
\end{remark}

The third main result establishes the convergence of a spatial semidiscretization for a linear stochastic heat equation in general spatial $ L^q $-norms.
\begin{theorem}\label{thm:conv}
    Let $ p \in (2,\infty) $, $ q \in [2,\infty) $, and assume $ f \in L_\mathbb{F}^p(\Omega \times \mathbb{R}_{+}; \gamma(H, X_0^q)) $. Let $ y $ denote the mild solution to
    \begin{subequations}\label{eq:y}
      \begin{numcases}{}
            \mathrm{d}y(t) + A_q y(t) \, \mathrm{d}t = f(t) \, \mathrm{d}W_H(t), \qquad  t > 0, \\
            y(0) = 0, 
        \end{numcases}
    \end{subequations}
    and let $ y_h $ denote the mild solution to the spatial semidiscretization
    \begin{subequations}\label{eq:yh}
      \begin{numcases}{}
            \mathrm{d}y_h(t) + \mathcal{A}_h y_h(t) \, \mathrm{d}t = P_h f(t) \, \mathrm{d}W_H(t), \qquad t > 0, \\
            y_h(0) = 0.
        \end{numcases}
    \end{subequations}
    Then
    \begin{equation}\label{eq:conv1}
        \|y - y_h\|_{L^p(\Omega \times \mathbb{R}_{+}; X_0^q)} 
        \leqslant C_{p,q,\varrho,\mathcal{O}} \, h \, \|f\|_{L^p(\Omega \times \mathbb{R}_{+}; \gamma(H, X_0^q))}.
    \end{equation}
    Moreover, for any $ \alpha \in [0, 1/p] $,
    \begin{equation}\label{eq:conv2}
        \left[ \mathbb{E} \sup_{t \in \mathbb{R}_{+}} \|(y - y_h)(t)\|_{X_{-\alpha}^q}^p \right]^{1/p} 
        \leqslant C_{\alpha,p,q,\varrho,\mathcal{O}} \, \big(1 + |\ln h|^{1-1/p}\big) \, h^{1 + 2(\alpha - 1/p)} \, \|f\|_{L^p(\Omega \times \mathbb{R}_{+}; \gamma(H, X_0^q))}.
    \end{equation}
  These estimates remain valid on any finite time interval \([0,T]\), \(T>0\), with \(\mathbb{R}_{+}\) replaced by \([0,T]\).
\end{theorem}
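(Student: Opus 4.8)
The plan is to represent both solutions as stochastic convolutions, $y(t)=\int_0^t e^{-(t-s)A_q}f(s)\,\mathrm dW_H(s)$ and $y_h(t)=\int_0^t e^{-(t-s)\mathcal A_h}P_hf(s)\,\mathrm dW_H(s)$, and to reduce the error to a pathwise (deterministic) smoothing problem by inserting the Ritz projection $R_h$ (characterised by $\mathcal A_hR_hv=P_hA_qv$ for $v\in X_1^q$). I would first derive, for sufficiently regular $f$ (so that $y$ is a strong solution), the Thom\'ee-type identity
\[
  (y-y_h)(t)=(I-P_h)y(t)+\int_0^t\mathcal A_he^{-(t-s)\mathcal A_h}(P_h-R_h)y(s)\,\mathrm ds ,
\]
obtained by differentiating $R_hy$, using $\mathcal A_hR_h=P_hA_q$ and the equation for $y$, and integrating by parts; the general case $f\in L^p_{\mathbb F}(\Omega\times\mathbb R_+;\gamma(H,X_0^q))$ then follows by density, the convolution term being interpreted in the sense of the $h$-uniform deterministic discrete maximal $L^p$-regularity of $\mathcal A_h$ (Property~(iii) of \cref{thm:Ah-Hinf}, i.e.\ the Weis-type estimate recalled after it). The two structural inputs are: (a) the continuous stochastic maximal $L^p$-regularity of $A_q$ (valid since $A_q$ has a bounded $H^\infty(\Sigma_\theta)$-calculus on $X_0^q$ with $\theta<\pi/2$), giving $\|y\|_{L^p(\Omega\times\mathbb R_+;X_{1/2}^q)}+\bigl(\mathbb E\sup_{t}\|y(t)\|_{(X_0^q,X_1^q)_{1/2-1/p,p}}^p\bigr)^{1/p}\leqslant C\|f\|_{L^p(\Omega\times\mathbb R_+;\gamma(H,X_0^q))}$; and (b) the $L^q$-stability of $P_h$ and $R_h$ together with the projection estimates $\|(I-P_h)v\|_{X_0^q}+\|(I-R_h)v\|_{X_0^q}\lesssim h\|v\|_{X_{1/2}^q}$ and $\|(I-P_h)v\|_{X_{-\alpha}^q}\lesssim h^{2(\mu+\alpha)}\|v\|_{(X_0^q,X_1^q)_{\mu,p}}$ (the latter by real interpolation of the operator bounds and duality), all with $h$-independent constants. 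The discrete stochastic maximal regularity of \cref{thm:DSMLP} may alternatively be used to bound $y_h$ itself.

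For \eqref{eq:conv1} I would bound the two terms of the identity in $L^p(\Omega\times\mathbb R_+;X_0^q)$: the projection term is $\lesssim h\|y\|_{L^p(\Omega\times\mathbb R_+;X_{1/2}^q)}\lesssim h\|f\|$, while the convolution term is $\mathcal A_h$ applied to the deterministic convolution of $g:=(P_h-R_h)y$ with the semigroup, so the discrete maximal $L^p$-regularity gives $\lesssim\|g\|_{L^p(\Omega\times\mathbb R_+;X_0^q)}\lesssim h\|y\|_{L^p(\Omega\times\mathbb R_+;X_{1/2}^q)}\lesssim h\|f\|$.

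For \eqref{eq:conv2} the projection term is handled pointwise in time by $\|(I-P_h)y(t)\|_{X_{-\alpha}^q}\lesssim h^{1+2(\alpha-1/p)}\|y(t)\|_{(X_0^q,X_1^q)_{1/2-1/p,p}}$ (the negative-norm estimate with $\mu=1/2-1/p$), which after $\mathbb E\sup_t(\cdot)^p$ and input~(a) contributes the clean rate $h^{1+2(\alpha-1/p)}\|f\|$ with no logarithm. The convolution term is the delicate one: rewriting it as $\int_0^t\mathcal A_h^{1-\alpha-\delta}e^{-(t-s)\mathcal A_h}\,\mathcal A_h^{\delta}(P_h-R_h)y(s)\,\mathrm ds$ for a parameter $\delta\in(1/p-\alpha,1/2)$, I would combine the $h$-uniform smoothing bound $\|\mathcal A_h^{1-\alpha-\delta}e^{-r\mathcal A_h}\|_{\mathcal L(X_0^q)}\lesssim r^{-(1-\alpha-\delta)}e^{-cr}$ (with $c>0$ independent of $h$, since $\mathcal A_h$ is $h$-uniformly sectoral with spectrum bounded below) with the inverse-type estimate $\|\mathcal A_h^{\delta}(P_h-R_h)v\|_{X_0^q}\lesssim h^{1-2\delta}\|v\|_{X_{1/2}^q}$ (moment inequality, plus the inverse inequality $\|\mathcal A_hv_h\|_{X_0^q}\lesssim h^{-2}\|v_h\|_{X_0^q}$ and $\|(P_h-R_h)v\|_{X_0^q}\lesssim h\|v\|_{X_{1/2}^q}$), and then apply Young's convolution inequality $\|K*\phi\|_{L^\infty(\mathbb R_+)}\leqslant\|K\|_{L^{p'}(\mathbb R_+)}\|\phi\|_{L^p(\mathbb R_+)}$ with $K(r):=r^{-(1-\alpha-\delta)}e^{-cr}$ and $\phi(s):=\|y(s)\|_{X_{1/2}^q}$; taking $\mathbb E(\cdot)^p$ and using input~(a) yields a bound $\lesssim h^{1-2\delta}\|K\|_{L^{p'}}\|f\|$, and since $\|K\|_{L^{p'}}\sim(\delta-1/p+\alpha)^{-(1-1/p)}$ as $\delta\downarrow1/p-\alpha$, the choice $\delta=1/p-\alpha+\tfrac{1-1/p}{2|\ln h|}$ balances the two factors and produces exactly $(1+|\ln h|^{1-1/p})\,h^{1+2(\alpha-1/p)}\|f\|$. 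Summing the two contributions gives \eqref{eq:conv2}, and the finite-time statements follow by restriction to $[0,T]$. The step I expect to be the main obstacle is precisely this final balancing --- quantifying the blow-up of the $L^{p'}$-norm of the singular semigroup kernel at the critical exponent and trading it against the inverse-inequality loss $h^{-2\delta}$ to extract the sharp rate together with the correct logarithmic power $1-1/p$ --- along with the (more routine, but not entirely trivial) density and stability argument needed to justify the error representation for merely $L^p$-integrable $f$ and to pass between the discrete and continuous negative-order norms.
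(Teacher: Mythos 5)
Your proposal is correct and its skeleton coincides with the paper's: both isolate $e_h:=y_h-P_hy$, observe that $e_h$ solves a \emph{deterministic} (pathwise) evolution equation driven by $\mathcal{A}_h(R_h-P_h)y$ (your Duhamel identity is exactly the mild form of the paper's differential identity in Step~1 of its proof), and then feed in the continuous stochastic maximal $L^p$-regularity \cref{eq:y-regu} together with the $O(h)$ error bounds for $P_h$ and $R_h$ in $\mathcal{L}(X_{1/2}^q,X_0^q)$; the treatment of \cref{eq:conv1} and of the $(I-P_h)y$ term in \cref{eq:conv2} is essentially identical to the paper's. Where you genuinely diverge is in how the logarithm in \cref{eq:conv2} is produced for the convolution term. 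The paper obtains, via Geissert's deterministic discrete maximal $L^p$-regularity and the trace embedding \cite[Corollary~1.14]{Lunardi2018}, the $\beta$-family of bounds \cref{eq:Phy-yh} in the real interpolation norm $(X_{0,h}^q,X_{1,h}^q)_{1-1/p,p}$, and then isolates the logarithmic loss in a single norm-comparison result on $X_h$ (\cref{lem:key}), proved by splitting a semigroup integral at $t=h^2$. You instead estimate the convolution kernel directly: writing $\mathcal{A}_h^{1-\alpha}e^{-r\mathcal{A}_h}=\mathcal{A}_h^{1-\alpha-\delta}e^{-r\mathcal{A}_h}\mathcal{A}_h^{\delta}$, paying $h^{-2\delta}$ through the inverse estimate (\cref{lem:etAh}(ii)), and optimizing $\delta\downarrow 1/p-\alpha$ against the blow-up of the $L^{p'}$-norm of $r^{-(1-\alpha-\delta)}e^{-cr}$; the choice $\delta-(1/p-\alpha)\sim|\ln h|^{-1}$ reproduces the factor $|\ln h|^{1-1/p}$ with the correct power. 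Both mechanisms are valid and yield the same rate; the paper's is more modular (the log lives in one reusable lemma, and a single parameter $\beta$ covers both \cref{eq:conv1,eq:conv2}), while yours avoids \cref{lem:key} and the trace embedding at the cost of the $\delta$-optimization, for which you should check that the constant in \cref{lem:etAh}(i) is uniform as the exponent $1-\alpha-\delta$ ranges over a compact subinterval of $(0,1)$ (it is), and handle $|\ln h|\to 0$ trivially. Finally, note that your kernel argument estimates $\|\mathcal{A}_h^{-\alpha}(\cdot)\|_{X_0^q}$, i.e.\ the \emph{discrete} negative norm, so the comparison $\|v_h\|_{X_{-\alpha}^q}\lesssim\|v_h\|_{X_{-\alpha,h}^q}$ (the paper's \cref{lem:lxy}, resting on \cref{lem:Aq-Ah-alpha}) is an indispensable ingredient rather than a routine afterthought; you correctly flag it, and with it supplied the argument closes.
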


For convergence results concerning spatial semidiscretizations of stochastic parabolic equations in the Hilbert space setting, see \cite[Section~3.4]{Kruse2014book}.
Notably, the case $p=2$, $q>2 $ lies beyond the scope of the present analysis.

\section{Proofs}
\label{sec:proof}

\subsection{Preliminary Lemmas}
\label{ssec:preliminary}

We collect in this section several standard estimates that are well established in the literature. 
The following fundamental properties of the negative Laplace operator $A_q$ are drawn from \cite[Theorem~2.12]{Yagi2010}, \cite[Theorem~2]{Duong1996}, and \cite[Proposition~9.10]{Kalton2006}.

\begin{lemma}
  \label{lem:Aq}
  Let $\theta \in (0, \pi/2)$ and $q \in (1, \infty)$. Then:
  \begin{enumerate}
    \item[\rm{(i)}] For all $z \in \mathbb{C} \setminus \Sigma_\theta$:
      \begin{align*}
   & \|(z - A_q)^{-1}\|_{\mathcal{L}(X_0^q)} \leqslant \frac{C_{\theta,q,\mathcal{O}}}{1 + |z|}, \\
   & \|A_q(z - A_q)^{-1}\|_{\mathcal{L}(X_0^q)} \leqslant C_{\theta,q,\mathcal{O}}.
      \end{align*}
    \item[\rm{(ii)}] For any $\phi \in \mathcal{H}_0^\infty(\Sigma_\theta)$:
      \[
        \|\phi(A_q)\|_{\mathcal{L}(X_0^q)} \leqslant C_{\theta,q,\mathcal{O}} \|\phi\|_{\mathcal{H}_0^\infty(\Sigma_\theta)}.
      \]
  \end{enumerate}
\end{lemma}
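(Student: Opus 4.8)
The plan is to treat the two parts separately: part~(i) follows from sectoriality combined with the invertibility of $A_q$, while part~(ii) rests on Gaussian heat-kernel bounds for the Dirichlet heat semigroup.

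For part~(i), I would first recall that $A_q$ is sectorial of angle strictly less than $\pi/2$. By \cite[Theorem~2.12]{Yagi2010}, the realization of $-\Delta$ under homogeneous Dirichlet conditions on the bounded domain $\mathcal{O}$ generates a bounded analytic semigroup on $X_0^q = L^q(\mathcal{O})$, its spectrum is contained in $(0,\infty)$, and for each $\theta \in (0,\pi/2)$ the sectorial resolvent bound $\|(z - A_q)^{-1}\|_{\mathcal{L}(X_0^q)} \leqslant C_{\theta,q,\mathcal{O}}/|z|$ holds for all $z \in \mathbb{C} \setminus \Sigma_\theta$. Since $\mathcal{O}$ is bounded, the Poincar\'e inequality forces a strictly positive spectral bound, so $0 \in \rho(A_q)$ and $A_q^{-1}$ is bounded; combining the boundedness of $(z - A_q)^{-1}$ on a neighbourhood of the origin with the decay bound for large $|z|$ upgrades the estimate to $\|(z - A_q)^{-1}\|_{\mathcal{L}(X_0^q)} \leqslant C_{\theta,q,\mathcal{O}}/(1 + |z|)$, which is the first displayed inequality. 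The second then follows from the algebraic identity $A_q(z - A_q)^{-1} = z(z - A_q)^{-1} - I$, since $\|A_q(z - A_q)^{-1}\|_{\mathcal{L}(X_0^q)} \leqslant |z| \, \|(z - A_q)^{-1}\|_{\mathcal{L}(X_0^q)} + 1 \leqslant C_{\theta,q,\mathcal{O}}$.

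For part~(ii), the key input is that the heat semigroup $(e^{-tA_q})_{t > 0}$ possesses a kernel satisfying a Gaussian upper bound. I would invoke \cite[Theorem~2]{Duong1996}, which establishes precisely such pointwise Gaussian bounds for the Dirichlet Laplacian on $\mathcal{O}$. Given these bounds, the passage to a bounded $H^\infty$-calculus is supplied by the Duong--McIntosh framework in the form recorded in \cite[Proposition~9.10]{Kalton2006}: an injective sectorial operator on $L^q(\mathcal{O})$ whose semigroup enjoys Gaussian kernel bounds admits a bounded $H^\infty(\Sigma_\theta)$-calculus for every $\theta \in (0,\pi/2)$, with $\|\phi(A_q)\|_{\mathcal{L}(X_0^q)} \leqslant C_{\theta,q,\mathcal{O}} \|\phi\|_{\mathcal{H}_0^\infty(\Sigma_\theta)}$ for all $\phi \in \mathcal{H}_0^\infty(\Sigma_\theta)$. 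This is exactly the claimed estimate.

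The genuinely substantive content lies entirely in the two cited external results, so the main obstacle is not in assembling the argument but in verifying that their hypotheses are met in our geometric setting. Concretely, the Gaussian bound of \cite{Duong1996} must be available for a merely convex $C^{2,0}$ domain, and the abstract transference principle of \cite{Kalton2006} requires $A_q$ to be injective and sectorial on $L^q$ across the full range $q \in (1,\infty)$; both hold here, the former by the regularity of $\partial\mathcal{O}$ and the latter by part~(i) together with the positive spectral bound. I therefore expect the proof to consist mainly of checking these hypotheses and invoking the two theorems, with no independent analytical estimates required.
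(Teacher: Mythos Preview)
Your proposal is correct and matches the paper's approach exactly: the paper does not give an independent proof but simply attributes the lemma to \cite[Theorem~2.12]{Yagi2010}, \cite[Theorem~2]{Duong1996}, and \cite[Proposition~9.10]{Kalton2006}, which are precisely the three references you invoke and combine. Your write-up in fact supplies more detail than the paper itself on how these pieces fit together.
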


The projection operator $ P_h $ has the following standard
stability and approximation property; see, e.g., \cite{Douglas1975}.
\begin{lemma}
  \label{lem:Ph}
  The operator $P_h$ satisfies:
  \begin{enumerate}
    \item[\rm{(i)}] For any $ q \in [1,\infty] $, $\|P_h\|_{\mathcal{L}(X_0^q,X_{0,h}^q)}$ is uniformly bounded with respect to $h$,
      though it may depend on $q$, $ \varrho $, and $ \mathcal O $.
    \item[\rm{(ii)}]
      For any $\alpha \in [0,1]$ and $q \in (1,\infty)$,
      $\|I - P_h\|_{\mathcal{L}(X_\alpha^q,X_0^q)} \leqslant C_{\alpha,q,\varrho,\mathcal O} h^{2\alpha}$.
    \item[\rm{(iii)}] For any $ q \in (1,\infty) $, $ \norm{\nabla(I-P_h)v}_{L^q(\mathcal O_h;\mathbb R^3)} \leqslant C_{q,\varrho,\mathcal O}h\norm{v}_{X_1^q} $,
      $ \forall v \in X_1^q $.
  \end{enumerate}
\end{lemma}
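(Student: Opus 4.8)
The plan is to prove the three assertions in the stated order, since (ii) and (iii) both rely on the $L^q$-stability established in (i); throughout, every constant must be tracked to confirm it is independent of $h$. For (i) I would begin with the endpoints. At $q=2$ the operator $P_h$ is an $L^2(\mathcal O)$-orthogonal projection, so $\norm{P_h}_{\mathcal L(X_0^2)}=1$. The essential input is the $L^\infty$-stability of the $L^2$-projection on quasi-uniform meshes, $\norm{P_h}_{\mathcal L(X_0^\infty)}\le C_{\varrho,\mathcal O}$, which is the classical result of \cite{Douglas1975} and is precisely where the quasi-uniformity parameter $\varrho$ enters. Since $P_h$ is self-adjoint with respect to the $L^2(\mathcal O)$ inner product, duality gives $\norm{P_h}_{\mathcal L(X_0^1)}=\norm{P_h}_{\mathcal L(X_0^\infty)}\le C_{\varrho,\mathcal O}$, and the Riesz--Thorin interpolation theorem between $q=1$ and $q=\infty$ then yields $\norm{P_h}_{\mathcal L(X_0^q)}\le C_{q,\varrho,\mathcal O}$ for all $q\in[1,\infty]$, proving (i).

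For (ii), fix a quasi-interpolant $I_h\colon X_1^q\to X_h$ preserving the homogeneous Dirichlet condition (e.g.\ a Scott--Zhang operator), with the standard estimates $\norm{v-I_hv}_{L^q(\mathcal O)}\le C_{q,\varrho,\mathcal O}h^2\norm{v}_{X_1^q}$ and $\norm{\nabla(v-I_hv)}_{L^q(\mathcal O_h)}\le C_{q,\varrho,\mathcal O}h\norm{v}_{X_1^q}$; here I use the equivalence $\norm{\cdot}_{X_1^q}\simeq\norm{\cdot}_{W^{2,q}(\mathcal O)}$ recorded in the preliminaries, and the $O(h^2)$ boundary skin $\mathcal O\setminus\mathcal O_h$ is absorbed using the convexity and $C^{2,0}$-regularity of $\partial\mathcal O$ together with $v|_{\partial\mathcal O}=0$. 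Because $P_h$ restricts to the identity on $X_h$, one has the key identity $(I-P_h)v=(I-P_h)(v-I_hv)$. Combined with (i), this gives at $\alpha=1$ the bound $\norm{(I-P_h)v}_{X_0^q}\le(1+\norm{P_h}_{\mathcal L(X_0^q)})\norm{v-I_hv}_{L^q(\mathcal O)}\le C_{q,\varrho,\mathcal O}h^2\norm{v}_{X_1^q}$, while at $\alpha=0$ the same stability gives $\norm{I-P_h}_{\mathcal L(X_0^q)}\le C_{q,\varrho,\mathcal O}$. Viewing $I-P_h$ as a single operator into the fixed target $X_0^q$, complex interpolation on the domain scale together with the $h$-independent equivalence $X_\alpha^q\cong[X_0^q,X_1^q]_\alpha$ from the preliminaries yields $\norm{I-P_h}_{\mathcal L(X_\alpha^q,X_0^q)}\le C_{q,\varrho,\mathcal O}^{1-\alpha}(C_{q,\varrho,\mathcal O}h^2)^\alpha\le C_{\alpha,q,\varrho,\mathcal O}h^{2\alpha}$ for all $\alpha\in[0,1]$, which is (ii).

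For (iii), I would again use $(I-P_h)v=(I-P_h)(v-I_hv)$ and split $\nabla(I-P_h)v=\nabla(v-I_hv)-\nabla P_h(v-I_hv)$ on $\mathcal O_h$. The first term is controlled directly by the interpolation estimate $\norm{\nabla(v-I_hv)}_{L^q(\mathcal O_h)}\le C_{q,\varrho,\mathcal O}h\norm{v}_{X_1^q}$. For the second, since $P_h(v-I_hv)\in X_h$, the inverse inequality $\norm{\nabla w_h}_{L^q(\mathcal O_h)}\le C_{q,\varrho,\mathcal O}h^{-1}\norm{w_h}_{L^q(\mathcal O)}$ (valid on quasi-uniform meshes, the constant absorbing $\varrho$) reduces matters to the $L^q$-stability of $P_h$ from (i) and the $L^q$ interpolation error, giving $\norm{\nabla P_h(v-I_hv)}_{L^q(\mathcal O_h)}\le C h^{-1}\norm{v-I_hv}_{L^q(\mathcal O)}\le C_{q,\varrho,\mathcal O}h\norm{v}_{X_1^q}$. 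Adding the two contributions proves (iii).

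The only genuinely nontrivial ingredient is the $L^\infty$-stability invoked in (i); once it is in hand, parts (ii) and (iii) are bookkeeping assembled from a quasi-interpolant, an inverse inequality, and interpolation of operators. Accordingly, the main point of care is not any single estimate but the uniform $h$-independence of every constant: the complex-interpolation bound carries no hidden interpolation constant, the interpolation-error and inverse-inequality constants depend on the mesh only through the shape-regularity and quasi-uniformity encoded in $\varrho$, and the equivalence $X_\alpha^q\cong[X_0^q,X_1^q]_\alpha$ is $h$-independent because the operator $A_q$ and the domain $\mathcal O$ are fixed.
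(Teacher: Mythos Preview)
Your proof is correct. The paper itself does not prove this lemma at all: it simply states the result as ``standard'' and cites \cite{Douglas1975}, so there is no proof in the paper to compare against. What you have written is precisely the standard argument that such a citation is meant to invoke --- $L^\infty$-stability from Douglas--Dupont--Wahlbin plus duality and Riesz--Thorin for (i), the idempotence identity $(I-P_h)v=(I-P_h)(v-I_hv)$ with a Scott--Zhang interpolant and complex interpolation along the scale $X_\alpha^q=[X_0^q,X_1^q]_\alpha$ for (ii), and the same identity combined with an inverse inequality for (iii). Your handling of the boundary skin $\mathcal O\setminus\mathcal O_h$ (width $O(h^2)$, Poincar\'e in the normal direction using $v|_{\partial\mathcal O}=0$) is the correct way to absorb the variational crime, and matches how the paper treats the skin later in the proof of \cref{lem:Rh}. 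Nothing needs to change.
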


The negative discrete Laplace operator $ \mathcal{A}_h $ has the following important properties. 
\begin{lemma} 
  \label{lem:z-Ah-inv}
  Suppose that $ \theta \in (0, \frac{\pi}{2}) $ and $ q \in (1, \infty) $.
  Then, for all $ z \in \mathbb{C} \setminus \Sigma_\theta $, the following inequalities hold:
  \begin{align}
    & \| (z - \mathcal{A}_h)^{-1} \|_{\mathcal{L}(X^q_{0,h})}
    \leqslant \frac{C_{\theta,q,\varrho,\mathcal{O}}}{1 + |z|},
    \label{eq:z-Ah-inv} \\
    & \| \mathcal{A}_h(z - \mathcal{A}_h)^{-1} \|_{\mathcal{L}(X^q_{0,h})}
    \leqslant C_{\theta,q,\varrho,\mathcal{O}}.
    \label{eq:Ah*z-Ah}
  \end{align}
\end{lemma}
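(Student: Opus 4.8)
The plan is to reduce both inequalities to the single resolvent bound
\[
  \norm{(z - \mathcal{A}_h)^{-1}}_{\mathcal{L}(X^q_{0,h})} \leqslant \frac{C_{\theta,q,\varrho,\mathcal{O}}}{1 + \abs{z}}, \qquad z \in \mathbb{C}\setminus\Sigma_\theta ,
\]
since \cref{eq:Ah*z-Ah} follows immediately from it via the identity $\mathcal{A}_h(z-\mathcal{A}_h)^{-1} = z(z-\mathcal{A}_h)^{-1} - I$ together with $\abs{z}/(1+\abs{z}) \leqslant 1$. To establish the displayed bound I would split $\mathbb{C}\setminus\Sigma_\theta$ into a neighbourhood of the origin, handled by a spectral-gap argument, and its exterior, handled by a cited $h$-uniform sectoriality estimate.

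First, $\mathcal{A}_h$ has an $h$-uniform spectral gap: extending functions of $X_h$ by zero to $\mathcal{O}\setminus\mathcal{O}_h$ we have $X_h \subset W^{1,2}_0(\mathcal{O})$, and since $\mathcal{A}_h$ is self-adjoint and positive definite on $(X_h,\norm{\cdot}_{L^2(\mathcal{O})})$, the Rayleigh quotient together with Poincaré's inequality on $\mathcal{O}$ gives
\[
  \min \sigma(\mathcal{A}_h) \;=\; \min_{0\ne v_h\in X_h}\frac{\norm{\nabla v_h}_{L^2(\mathcal{O})}^2}{\norm{v_h}_{L^2(\mathcal{O})}^2}\;\geqslant\;\lambda_1(\mathcal{O})\;>\;0 ,
\]
with $\lambda_1(\mathcal{O})$ independent of $h$. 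Since $X_h$ is finite dimensional, $\sigma(\mathcal{A}_h)\subset[\lambda_1(\mathcal{O}),\infty)$ does not depend on the norm carried by $X_h$, so $(z-\mathcal{A}_h)^{-1}$ is well defined for every $z\in\mathbb{C}\setminus\Sigma_\theta$. I would also use the $h$-uniform bound $\norm{\mathcal{A}_h^{-1}}_{\mathcal{L}(X_0^q)} \leqslant M_{q,\varrho,\mathcal{O}}$, which is the standard $L^q$-stability of the finite element solution operator for the Poisson problem on quasi-uniform meshes (\cite[Chapter~8]{Brenner2008}; alternatively it follows from the relation $\mathcal{A}_h^{-1}P_h = R_h A_q^{-1}$ with $R_h$ the Ritz projection, from $\norm{A_q^{-1}g}_{X_1^q} = \norm{g}_{X_0^q}$, and from $W^{1,q}$-stability of $R_h$). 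Then for $\abs{z} < 1/(2M_{q,\varrho,\mathcal{O}})$ the factorization $z-\mathcal{A}_h = -\mathcal{A}_h(I - z\mathcal{A}_h^{-1})$ and a Neumann series give $\norm{(z-\mathcal{A}_h)^{-1}}_{\mathcal{L}(X_0^q)} \leqslant 2M_{q,\varrho,\mathcal{O}}$, which is dominated by $C/(1+\abs{z})$ on that region.

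For $\abs{z}\geqslant 1/(2M_{q,\varrho,\mathcal{O}})$ with $z\in\mathbb{C}\setminus\Sigma_\theta$ I would invoke the $h$-uniform sectoriality of $\mathcal{A}_h$ on $X_0^q = L^q(\mathcal{O})$: for every $\theta\in(0,\pi/2)$ there is $C_{\theta,q,\varrho,\mathcal{O}}$, independent of $h$, such that $\norm{(z-\mathcal{A}_h)^{-1}}_{\mathcal{L}(X_0^q)} \leqslant C_{\theta,q,\varrho,\mathcal{O}}/\abs{z}$ for all $z\in\mathbb{C}\setminus\Sigma_\theta$ (if a reference only supplies this for a strictly smaller opening angle, one first passes to an intermediate angle, which is harmless since $\theta<\pi/2$ is arbitrary). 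This is precisely the discrete resolvent estimate available in the literature, obtained via Gaussian kernel bounds for the discrete semigroup in \cite{Geissert2006} (see also \cite[Chapter~8]{Brenner2008}) and via discrete $L^q$ resolvent estimates in \cite{Bakaev2002}; alternatively it can be proved directly by noting that, for $g\in X_h$, the functions $w_h := (z-\mathcal{A}_h)^{-1}g$ and $w := (z-A_q)^{-1}g$ satisfy the Galerkin orthogonality $z(w-w_h,v_h)_{L^2(\mathcal{O})} = (\nabla(w-w_h),\nabla v_h)_{L^2(\mathcal{O})}$ for all $v_h\in X_h$, and then estimating $\norm{w-w_h}_{L^q(\mathcal{O})}$ uniformly in $z$ by a duality argument together with \cref{lem:Aq} and \cref{lem:Ph}. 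Since $1/\abs{z}\leqslant(1+2M_{q,\varrho,\mathcal{O}})/(1+\abs{z})$ on this region, combining the two regimes yields the bound on all of $\mathbb{C}\setminus\Sigma_\theta$, and \cref{eq:Ah*z-Ah} follows as above.

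The main obstacle is the $h$-uniformity of the $L^q$-sectoriality for $q\ne 2$: the case $q=2$ is immediate from self-adjointness and the spectral gap, but for general $q$ this is where quasi-uniformity of $\mathcal{K}_h$ is genuinely needed, and it rests on either pointwise kernel bounds for $(e^{-t\mathcal{A}_h})_{t\geqslant0}$ or $h$-uniform $L^q$-regularity of the discrete elliptic resolvent problem — inputs the present paper takes from \cite{Geissert2006,Bakaev2002,Brenner2008} rather than re-deriving, consistent with its stated goal of building the $H^\infty$-calculus of $\mathcal{A}_h$ on top of these resolvent bounds. The remaining ingredients — the spectral-gap lower bound, the $L^q$-boundedness of $\mathcal{A}_h^{-1}$, and the Neumann-series and algebraic manipulations — are elementary and visibly $h$-independent.
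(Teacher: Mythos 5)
Your argument is correct and reaches the same conclusion, but it organizes the reduction differently from the paper. The paper's proof splits in the integrability exponent: it quotes Theorem~1.1 of \cite{Bakaev2002} (together with \cref{lem:Ph}(i)) for the maximum-norm resolvent bound with the full $1/(1+|z|)$ decay, obtains the $L^2$ case from self-adjointness and the $h$-uniform spectral gap, and then passes to all $q\in(1,\infty)$ by interpolation and duality; \cref{eq:Ah*z-Ah} then follows from the resolvent identity exactly as in your first paragraph. You instead split in the spectral parameter $z$: a Neumann-series patch near the origin (using the spectral gap and the $h$-uniform $L^q$-boundedness of $\mathcal{A}_h^{-1}$) upgrades a cited $C/|z|$ sectoriality bound to the $C/(1+|z|)$ form. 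Both routes outsource the genuinely hard input --- an $h$-uniform resolvent estimate in a norm other than $L^2$ --- to \cite{Bakaev2002,Geissert2006}, and you correctly identify this as the crux. Two cautions on your version. First, \cite{Bakaev2002} supplies only the maximum-norm estimate, so "discrete $L^q$ resolvent estimates in \cite{Bakaev2002}" is not quite what that reference provides; to extract the $L^q$ bound from it you would in effect have to run the paper's interpolation/duality step anyway, which makes the paper's organization the more economical one given the available references (your citation of the kernel bounds in \cite{Geissert2006} is a legitimate alternative source). Second, your sketched "direct" alternative --- bounding $\|w-w_h\|_{L^q}$ uniformly in $z$ by duality --- is essentially the content of \cref{lem:z-Ah}, whose proof in the paper uses \cref{eq:Ah*z-Ah}; if you pursued that route you would need to carry out the Schatz-type duality argument independently to avoid circularity. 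Neither point is a gap in your primary argument, which is sound.
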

\begin{proof}
  Theorem 1.1 of \cite{Bakaev2002}, combined with Lemma~\ref{lem:Ph}(i), establishes
  the following assertion for $ s = \infty $:
  \begin{equation}
    \label{eq:tmp}
    \| (z - \mathcal{A}_h)^{-1} P_h \|_{\mathcal{L}(L^s(\mathcal{O}))}
    \leqslant \frac{C_{\theta,\varrho,\mathcal{O}}}{1 + |z|},
    \quad \forall z \in \mathbb{C} \setminus \Sigma_{\theta}.
  \end{equation}
  The operator $ \mathcal{A}_h $ is symmetric and positive definite on $ X_{0,h}^2 $,
  with its smallest eigenvalue bounded below uniformly in $ h $.
  This immediately verifies \eqref{eq:tmp} for $ s = 2 $.
  Interpolation then extends the result to $ s \in (2, \infty) $,
  while a duality argument extends it to $ s \in (1, 2) $.
  Since $ P_h $ acts as the identity on $ X_{0,h}^q $, \eqref{eq:z-Ah-inv} follows.
  The inequality \eqref{eq:Ah*z-Ah} is then immediate from \eqref{eq:z-Ah-inv}
  via the resolvent identity.
\end{proof}

\begin{lemma}
  \label{lem:etAh}
  Let $ 0 \leqslant \alpha \leqslant 1 $ and $ q \in (1,\infty) $. The following estimates hold:
  \begin{enumerate}
    \item[\rm{(i)}] For all $ t >0 $, 
      \begin{equation}
        \|\mathcal{A}_h^\alpha e^{-t\mathcal{A}_h}\|_{\mathcal{L}(X_{0,h}^q)}
        \leqslant C_{\alpha,q,\varrho,\mathcal{O}}
        t^{-\alpha}e^{-\delta t}, \quad \forall t > 0,
      \end{equation}
      where $ \delta $ is a positive constant that depends only on $ q $, $ \varrho $, and $ \mathcal{O} $.
    \item[\rm{(ii)}] 
      $\|\mathcal{A}_h^\alpha\|_{\mathcal{L}(X_{0,h}^q)} \leqslant C_{\alpha,q,\varrho,\mathcal{O}} h^{-2\alpha}$.
  \end{enumerate}
\end{lemma}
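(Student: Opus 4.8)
The plan is to derive both estimates in \cref{lem:etAh} from the resolvent bounds in \cref{lem:z-Ah-inv} together with the Dunford/Cauchy integral representation of the analytic semigroup, mimicking the classical sectorial-operator argument but tracking that every constant depends on $\mathcal{A}_h$ only through the resolvent bound, hence is $h$-uniform. First I would fix $\theta' \in (0,\pi/2)$ slightly larger than needed and write
\[
  \mathcal{A}_h^\alpha e^{-t\mathcal{A}_h}
  = \frac{1}{2\pi i} \int_{\Gamma} z^\alpha e^{-tz} (z - \mathcal{A}_h)^{-1} \, \mathrm{d}z,
\]
where $\Gamma$ is a standard sectorial contour (two rays $\{re^{\pm i(\pi - \theta'')} : r \geqslant \delta_0\}$ joined by an arc of radius $\delta_0$ around the origin, with $\delta_0$ chosen so that $\mathrm{Re}(z) \leqslant -\delta$ on the two rays, using that the spectrum of $\mathcal{A}_h$ is bounded below uniformly in $h$). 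Because \cref{lem:z-Ah-inv} gives $\|(z-\mathcal{A}_h)^{-1}\|_{\mathcal{L}(X_{0,h}^q)} \leqslant C/(1+|z|)$ on the whole complement of $\Sigma_\theta$ with $C = C_{\theta,q,\varrho,\mathcal{O}}$, the integrand is bounded by $C|z|^{\alpha-1} e^{-t\,\mathrm{Re}(z)}$; the usual scaling substitution $z = s/t$ on the rays then produces the factor $t^{-\alpha}$, the exponential decay off the rays contributes the $e^{-\delta t}$, and the small arc is harmless after the same rescaling. This proves part~(i), and the constant inherits dependence only on $\alpha,q,\varrho,\mathcal{O}$.

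For part~(ii) I would not integrate but instead use the moment/interpolation trick together with part~(i). Writing $\mathcal{A}_h^\alpha = \mathcal{A}_h^\alpha e^{-t\mathcal{A}_h} \cdot e^{t\mathcal{A}_h}$ is circular, so instead I would use the representation of negative powers, $\mathcal{A}_h^{-\beta} = \frac{1}{\Gamma(\beta)} \int_0^\infty s^{\beta-1} e^{-s\mathcal{A}_h}\,\mathrm{d}s$ for $\beta>0$, combined with the bound $\|e^{-s\mathcal{A}_h}\|_{\mathcal{L}(X_{0,h}^q)} \leqslant C_{q,\varrho,\mathcal{O}}$ (the $\alpha=0$ case of part~(i), which in fact also follows directly from analyticity) and the two-sided spectral bound to get $\|\mathcal{A}_h^{-\beta}\|_{\mathcal{L}(X_{0,h}^q)} \leqslant C h^{0}$ — but that gives no $h$ factor. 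The cleaner route: combine part~(i) at a specific time $t = h^2$ with the elementary inverse estimate $\|e^{-t\mathcal{A}_h}\|^{-1}$ is not available, so I would instead invoke the inverse inequality for finite element spaces, $\|\mathcal{A}_h v_h\|_{X_{0,h}^2} \leqslant C_{\varrho,\mathcal{O}} h^{-2} \|v_h\|_{X_{0,h}^2}$, i.e. $\|\mathcal{A}_h\|_{\mathcal{L}(X_{0,h}^2)} \leqslant C h^{-2}$, extend it to $L^q$ by the same $P_h$-based interpolation/duality argument used in the proof of \cref{lem:z-Ah-inv} (the operator norm of $\mathcal{A}_h$ on $X_{0,h}^\infty$ being $O(h^{-2})$ follows from \cite{Bakaev2002}-type estimates or directly from \cref{lem:z-Ah-inv} by letting $|z|\to\infty$ along $\mathbb{R}_-$), and then use $\|\mathcal{A}_h^\alpha\|_{\mathcal{L}(X_{0,h}^q)} \leqslant \|\mathcal{A}_h\|_{\mathcal{L}(X_{0,h}^q)}^\alpha \cdot \|I\|^{1-\alpha}$ via the moment inequality $\|\mathcal{A}_h^\alpha v\| \leqslant C\|\mathcal{A}_h v\|^\alpha \|v\|^{1-\alpha}$ valid for any sectorial operator with $h$-uniform sectoriality constant (\cref{lem:z-Ah-inv}). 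This yields $\|\mathcal{A}_h^\alpha\|_{\mathcal{L}(X_{0,h}^q)} \leqslant C_{\alpha,q,\varrho,\mathcal{O}} h^{-2\alpha}$.

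The main obstacle I anticipate is purely bookkeeping rather than conceptual: one must make sure the contour $\Gamma$ can be chosen to pass to the left of zero at a uniform distance $\delta_0$ independent of $h$ — this needs the uniform lower bound on the smallest eigenvalue of $\mathcal{A}_h$ (a Poincaré-type estimate, uniform because $X_h \subset W_0^{1,2}(\mathcal{O})$ and $\mathcal{O}$ is fixed), which is exactly the ingredient already used in the proof of \cref{lem:z-Ah-inv} — and that the extension of the $L^\infty$/$L^2$ resolvent and boundedness estimates to general $q \in (1,\infty)$ by interpolation and duality does not secretly introduce $h$-dependence. Since \cref{lem:z-Ah-inv} already packages precisely the $h$-uniform resolvent bound on all of $\mathbb{C}\setminus\Sigma_\theta$ for every $q$, both steps go through with constants of the claimed form, and the proof is short.
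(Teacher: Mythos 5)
Your approach is essentially the paper's: the paper simply cites Pazy, Chapter~2, Theorems~6.13 and 6.10 together with the resolvent estimate of \cref{lem:z-Ah-inv} and the inverse estimate $\|\mathcal{A}_h\|_{\mathcal{L}(X_{0,h}^q)}\leqslant C h^{-2}$, and your proposal just unfolds those two classical theorems while checking $h$-uniformity of the constants; part~(ii) in particular (moment inequality plus inverse estimate) is exactly the paper's argument. One slip to fix in part~(i): with the kernel $z^{\alpha}e^{-tz}(z-\mathcal{A}_h)^{-1}$ the contour you describe (rays at angles $\pm(\pi-\theta'')$ with $\operatorname{Re}z\leqslant-\delta$) makes $|e^{-tz}|=e^{-t\operatorname{Re}z}$ blow up along the rays, so the integral diverges as written; you need either the right-opening contour $\delta+\partial\Sigma_{\theta'}$ with $\operatorname{Re}z\geqslant\delta>0$ (uniform in $h$ by the spectral lower bound you already invoke), or the kernel $e^{tz}(z+\mathcal{A}_h)^{-1}$ with your left-pointing contour. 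After that standard correction the rescaling argument gives $t^{-\alpha}e^{-\delta t}$ as claimed.
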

\begin{proof}
  Assertion (i) follows from Theorem~6.13 in \cite[Chapter 2]{Pazy1983}, combined with the resolvent estimate \eqref{eq:z-Ah-inv} in Lemma~\ref{lem:z-Ah-inv}.
  For (ii), Theorem~6.10 in \cite[Chapter 2]{Pazy1983} and the standard inverse estimate
  $$
    \|\mathcal{A}_h v_h\|_{X_{0,h}^q} \leqslant C_{q,\varrho,\mathcal{O}} h^{-2} \|v_h\|_{X_{0,h}^q}, \quad \forall v_h \in X_h,
  $$
  yield the required inequality.
\end{proof}

For any $ q \in (1,\infty) $, let $ R_h: X_{1/2}^q \to X_h $ denote the standard Ritz projection, defined by
\[
  \int_{\mathcal{O}_h} \nabla (u - R_h u) \cdot \nabla v_h \, \mathrm{d}x = 0, \quad \forall u \in X_{1/2}^q, \, v_h \in X_h.
\]
It is clear that $R_hv = \mathcal{A}_h^{-1}P_hA_qv $ holds for all $ v \in X_{1/2}^q $.
\begin{lemma}
  \label{lem:Rh}
  The operator $ R_h $ satisfies the following properties:
  \begin{enumerate}
    \item[\rm{(i)}] For any $ q \in (1,\infty) $,
      \begin{equation} 
        \norm{I - R_h}_{\mathcal{L}(X_1^q, X_0^q)} \leqslant C_{q,\varrho,\mathcal{O}} h^{2}.
        \label{eq:Rh-conv-1} 
      \end{equation}
    \item[\rm{(ii)}] For any $ q \in [2,\infty) $,
      \begin{equation} 
        \norm{I - R_h}_{\mathcal{L}(X_{1/2}^q, X_0^q)} \leqslant C_{q,\varrho,\mathcal{O}} h.
        \label{eq:Rh-conv-2} 
      \end{equation}
  \end{enumerate}
\end{lemma}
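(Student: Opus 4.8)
The plan is to reduce both estimates to the already-established approximation properties of the $L^2$-projection $P_h$ (Lemma~\ref{lem:Ph}) together with a duality/Aubin--Nitsche argument, exploiting the identity $R_h v = \mathcal{A}_h^{-1} P_h A_q v$. For part~(i), I would first observe that for $u \in X_1^q$ and any $v_h \in X_h$ the Galerkin orthogonality $\int_{\mathcal{O}_h} \nabla(u - R_h u)\cdot\nabla v_h = 0$ combined with the identity \eqref{eq:Aq-W12} for $A_q$ gives, for arbitrary $w \in X_{1/2}^{q'}$,
\[
  \langle u - R_h u, A_{q'} w\rangle = \int_{\mathcal{O}_h} \nabla(u - R_h u)\cdot\nabla(w - v_h)\,\mathrm{d}x,
\]
after also handling the discrepancy between $\mathcal{O}$ and $\mathcal{O}_h$ via the vertex-compatibility hypothesis (so that the boundary strip contributes only higher-order terms; this is where $\partial\mathcal{O} \in C^{2,0}$ and quasi-uniformity are used). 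Choosing $v_h = R_h w$ (the Ritz projection of $w$) makes the right-hand side $\int \nabla(u - R_h u)\cdot\nabla(w - R_h w)$, so by Cauchy--Schwarz/Hölder it is bounded by $\norm{\nabla(u - R_h u)}_{L^q(\mathcal{O}_h)}\norm{\nabla(w - R_h w)}_{L^{q'}(\mathcal{O}_h)}$. The first-order Ritz error in the gradient, $\norm{\nabla(I - R_h)u}_{L^q} \leqslant C h \norm{u}_{X_1^q}$, follows from Lemma~\ref{lem:Ph}(iii) plus the best-approximation property of $R_h$ in the $H^1$-seminorm and an $h$-uniform $W^{1,q}$-stability of $R_h$ (or, more cheaply, by writing $R_h u = \mathcal{A}_h^{-1}P_h A_q u$ and invoking the inverse inequality together with Lemma~\ref{lem:Ph}(ii)); similarly $\norm{\nabla(I - R_h)w}_{L^{q'}} \leqslant C h \norm{w}_{X_1^{q'}}$. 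Since $X_1^{q'}$ is dense in $X_{1/2}^{q'}$... actually it is cleaner to take $w = A_{q'}^{-1}g$ with $g$ in a dense subset, so that $\norm{w}_{X_1^{q'}} = \norm{g}_{X_0^{q'}}$, and conclude $\norm{u - R_h u}_{X_0^q} = \sup_{\norm{g}_{X_0^{q'}}=1}\langle u - R_h u, g\rangle \leqslant C h^2 \norm{u}_{X_1^q}$, which is \eqref{eq:Rh-conv-1}.

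For part~(ii), the same duality identity applies, but now $u \in X_{1/2}^q$ only, so the best I can say for the primal gradient error is the first-order estimate $\norm{\nabla(I - R_h)u}_{L^q(\mathcal{O}_h)} \leqslant C\norm{u}_{X_{1/2}^q}$ (no extra power of $h$), which follows from $H^1$-stability of $R_h$ in $L^q$; testing against $w = A_{q'}^{-1}g$ still yields the full power $h$ from the dual factor $\norm{\nabla(I - R_h)w}_{L^{q'}} \leqslant C h\norm{g}_{X_0^{q'}}$. Thus $\norm{u - R_h u}_{X_0^q} \leqslant C h\norm{u}_{X_{1/2}^q}$, which is \eqref{eq:Rh-conv-2}. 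The restriction $q \geqslant 2$ enters precisely here: the dual exponent $q' \leqslant 2$, and $W^{1,q'}$-stability of the Ritz projection on a quasi-uniform mesh is available for $q' \in (1,2]$ (via, e.g., weighted-norm or maximal-regularity arguments, or directly from Lemma~\ref{lem:z-Ah-inv} and $R_h = \mathcal{A}_h^{-1}P_h A_q$), whereas for $q < 2$ one would need $W^{1,q'}$-stability with $q' > 2$, which fails in general on non-quasi-uniform meshes and would require the stronger structural assumptions we are avoiding.

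The main obstacle I anticipate is the geometric discrepancy between $\mathcal{O}$ and the polyhedral approximation $\mathcal{O}_h$: the Ritz projection is defined via an integral over $\mathcal{O}_h$ while the norms $X_\alpha^q$ live on $\mathcal{O}$, so the duality identity above is not exact and acquires a boundary-layer remainder supported in $\mathcal{O}\setminus\mathcal{O}_h$ (a strip of width $O(h^2)$ by the $C^{2,0}$-regularity and vertex compatibility). Controlling this term requires a trace/Hardy-type estimate showing it is of order $h^2\norm{u}_{X_1^q}$ for (i) and order $h\norm{u}_{X_{1/2}^q}$ for (ii) — dominated by the main terms in both cases. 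An alternative that sidesteps most of this is to work entirely through the operator identity $R_h = \mathcal{A}_h^{-1}P_h A_q$: then \eqref{eq:Rh-conv-1} reads $\norm{(I - \mathcal{A}_h^{-1}P_h A_q)u}_{X_0^q} \leqslant C h^2\norm{A_q u}_{X_0^q}$, which one can attack by inserting $P_h$, writing $u - \mathcal{A}_h^{-1}P_h A_q u = (u - P_h u) + (P_h u - \mathcal{A}_h^{-1}P_h A_q u)$ and bounding the second difference via $\mathcal{A}_h^{-1}P_h A_q u - P_h u = \mathcal{A}_h^{-1}(P_h A_q u - \mathcal{A}_h P_h u)$ together with the $h$-uniform resolvent bounds of Lemma~\ref{lem:z-Ah-inv}; but this still ultimately needs the consistency estimate $\norm{\mathcal{A}_h^{-1}(P_h A_q - \mathcal{A}_h P_h)u}_{X_0^q} = O(h^2\norm{u}_{X_1^q})$, which again is where the real work lies. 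I would present the duality version as the cleaner exposition and relegate the $\mathcal{O}\setminus\mathcal{O}_h$ bookkeeping to a short remark or cite a standard reference such as \cite{Brenner2008}.
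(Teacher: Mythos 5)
Your proposal follows essentially the same route as the paper: a duality (Aubin--Nitsche) argument built on the Galerkin orthogonality of $R_h$, an $h$-uniform $W^{1,q}$-stability of $R_h$, the first-order gradient error estimate coming from Lemma~\ref{lem:Ph}(iii), and a separate treatment of the boundary strip $\mathcal{O}\setminus\mathcal{O}_h$. (The paper tests against $w-P_hw$ rather than $w-R_hw$; that difference is immaterial.) Two points in your sketch need correction, though.

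First, your ``cheaper'' route to the gradient error estimate --- inverse inequality plus Lemma~\ref{lem:Ph}(ii) applied to $R_h u = \mathcal{A}_h^{-1}P_hA_qu$ --- is circular: after inserting $P_hu$ and applying the inverse inequality to $\nabla(P_hu-R_hu)$, you would need $\|P_hu-R_hu\|_{L^q}\leqslant Ch^2\|u\|_{X_1^q}$, which is equivalent to the very estimate \eqref{eq:Rh-conv-1} you are proving. The viable route is the one you list first: $W^{1,q}$-stability of $R_h$, which the paper derives from the $W^{1,\infty}$-gradient stability of \cite{Bakaev2002}, the trivial $L^2$ case, complex interpolation, and duality --- and, importantly, this stability holds for \emph{all} exponents in $(1,\infty)$ on a quasi-uniform mesh. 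Consequently, your diagnosis of why part~(ii) requires $q\geqslant 2$ is wrong: it is not a failure of $W^{1,q'}$-stability for $q'>2$. The restriction actually originates in the boundary-strip term you deferred to ``bookkeeping'': after integrating by parts over $\mathcal{O}\setminus\mathcal{O}_h$, one needs a trace estimate of the form $\|v\|_{L^{2q/3}(\partial\mathcal{O}_h)}\leqslant C h^{2-2/q}\|v\|_{X_{1/2}^q}$, and the requirement $h^{2-2/q}\leqslant h$ forces $q\geqslant 2$. Since that strip estimate is simultaneously the source of the exponent restriction and the most delicate part of the argument (it uses the $C^{2,0}$-regularity of $\partial\mathcal{O}$, the $O(h^2)$ width of the strip guaranteed by vertex compatibility, and a trace inequality), it cannot be relegated to a short remark; it is where the real work of the lemma lies.
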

\begin{proof}
  These properties are well-known in the literature.
  They follow directly from the theory in \cite{Bakaev2002} combined with standard finite element error analysis
  for second-order elliptic equations (see, e.g., \cite[Chapter 8]{Brenner2008}).
  For completeness, we sketch the argument, which comprises four steps.

\textbf{Step 1.}
We begin by establishing the stability estimate
\begin{equation}
    \label{eq:Rhv-W1q-stab}
    \norm{\nabla R_h v}_{L^q(\mathcal{O}_h; \mathbb{R}^3)}
    \leqslant C_{q,\varrho,\mathcal O} \norm{\nabla v}_{L^q(\mathcal{O}_h; \mathbb{R}^3)},
    \quad \forall v \in X_{1/2}^q, \, \forall q \in (1, \infty).
\end{equation}
By \cite[Equation~(2.5)]{Bakaev2002}, we have the uniform bound
$$
    \norm{\nabla R_h v}_{L^\infty(\mathcal{O}_h; \mathbb{R}^3)} \leqslant C_{\varrho,\mathcal{O}} \norm{\nabla v}_{L^\infty(\mathcal{O}; \mathbb{R}^3)}, \quad \forall v \in W_0^{1,\infty}(\mathcal{O}).
$$
Moreover, from the definition of $ R_h $, it follows that
\begin{equation}
    \label{eq:Rhv-W12}
    \norm{\nabla R_h v}_{L^2(\mathcal{O}_h; \mathbb{R}^3)} \leqslant
    \norm{\nabla v}_{L^2(\mathcal{O}_h; \mathbb{R}^3)}
    \leqslant \norm{\nabla v}_{L^2(\mathcal{O}; \mathbb{R}^3)},
    \quad \forall v \in W_0^{1,2}(\mathcal O).
\end{equation}
Using complex interpolation, we deduce that
\begin{equation}
  \label{eq:Rhv-1}
  \| \nabla R_h v \|_{L^q(\mathcal{O}_h; \mathbb{R}^3)}
  \leqslant C_{q,\varrho,\mathcal{O}} \norm{\nabla v}_{L^q(\mathcal O;\mathbb R^3)},
  \quad \forall v \in W_0^{1,q}, \, \forall q \in [2, \infty).
\end{equation}
Next, consider $q \in (1, 2)$. For $v \in X_{1/2}^q$, a duality argument gives
\begin{align*}
  & \|\nabla R_h v\|_{L^q(\mathcal{O}_h; \mathbb{R}^3)}
  \leqslant C_{\mathcal{O}} \|R_h v\|_{X_{1/2}^q}
     = C_{\mathcal{O}} \sup_{0 \neq g \in X_{-1/2}^{q'} } 
    \frac{\dual{R_h v, g}}{\|g\|_{X_{-1/2}^{q'}}} \\
    & = C_{\mathcal{O}} \sup_{0 \neq g \in X_{-1/2}^{q'}} 
    \frac{\int_{\mathcal O}\nabla R_h v \cdot  \nabla\big(A_{q'}^{-1} g\big) \,  \mathrm{d}x}{\|g\|_{X_{-1/2}^{q'}}} \quad \text{(by \cref{eq:Aq-W12})} \\
    & = C_{\mathcal{O}} \sup_{0 \neq g \in X_{-1/2}^{q'}} 
    \frac{\int_{\mathcal O}\nabla v \cdot \nabla \big( R_h A_{q'}^{-1} g \big) \,  \mathrm{d}x}{\|g\|_{X_{-1/2}^{q'}}} \quad \text{(by the definition of $R_h$)},
\end{align*}
where $ \dual{\cdot,\cdot} $ denotes the duality pairing between $ X_{1/2}^q $ and $ X_{-1/2}^{q'} $.
Since $ \operatorname{supp}(R_h A_{q'}^{-1} g) \subset \overline{\mathcal{O}_h} $, we can restrict the integration to $ \mathcal{O}_h $, yielding
\begin{align*}
    \norm{\nabla R_h v}_{L^q(\mathcal{O}_h; \mathbb{R}^3)}
    & \leqslant C_{\mathcal{O}} \sup_{0 \neq g \in X_{-1/2}^{q'}} 
    \frac{\int_{\mathcal O_h}\nabla v \cdot \nabla \big( R_h A_{q'}^{-1} g\big) \, \mathrm{d}x}{\norm{g}_{X_{-1/2}^{q'}}}.
\end{align*}
Applying Hölder's inequality, estimate \eqref{eq:Rhv-1}, and the equality $ \norm{A_{q'}^{-1}g}_{X_{1/2}^{q'}} = \norm{g}_{X_{-1/2}^{q'}} $, we obtain
\begin{equation}
    \label{eq:Rhv-2}
    \norm{\nabla R_h v}_{L^q(\mathcal{O}_h; \mathbb{R}^3)}
    \leqslant C_{q,\varrho,\mathcal{O}} \norm{\nabla v}_{L^q(\mathcal{O}_h; \mathbb{R}^3)},
    \quad \forall v \in X_{1/2}^q, \, \forall q \in (1, 2).
\end{equation}
A similar duality argument using \eqref{eq:Rhv-2} yields
\begin{equation}
    \label{eq:Rhv-3}
    \norm{\nabla R_h v}_{L^q(\mathcal{O}_h; \mathbb{R}^3)}
    \leqslant C_{q,\varrho,\mathcal{O}} \norm{\nabla v}_{L^q(\mathcal{O}_h; \mathbb{R}^3)},
    \quad \forall v \in X_{1/2}^q, \, \forall q \in (2, \infty).
\end{equation}
Combining estimates \eqref{eq:Rhv-W12}, \eqref{eq:Rhv-2}, and \eqref{eq:Rhv-3}, we conclude the desired stability estimate \eqref{eq:Rhv-W1q-stab}.

\textbf{Step 2.} 
Using the stability estimate \eqref{eq:Rhv-W1q-stab} and the identity $ R_h v_h = v_h $ for all $ v_h \in X_h $, we obtain, for any $ v \in X_1^q $ and $ q \in (1,\infty) $,
\begin{align*}
    \norm{\nabla (I - R_h) v}_{L^q(\mathcal{O}_h; \mathbb{R}^3)}
    &= \norm{\nabla (I - R_h) (v - P_h v)}_{L^q(\mathcal{O}_h; \mathbb{R}^3)} \\
    &\leqslant C_{q,\varrho,\mathcal{O}} \norm{\nabla (v - P_h v)}_{L^q(\mathcal{O}_h; \mathbb{R}^3)}.
\end{align*}
An application of Lemma~\ref{lem:Ph}(iii) then yields
\begin{equation}
    \label{eq:Rh-conv-W1q}
    \| \nabla (v - R_h v) \|_{L^q(\mathcal{O}_h; \mathbb{R}^3)}
    \leqslant C_{q,\varrho,\mathcal{O}} h \norm{v}_{X_1^q},
    \quad \forall v \in X_1^q, \quad \forall q \in (1, \infty).
\end{equation}

\textbf{Step 3.}
We now establish property (i). Recall that $\mathcal{O} \subset \mathbb{R}^3$ is a bounded convex domain with a $C^{2,0}$ boundary $\partial\mathcal{O}$, and that each vertex of $\partial\mathcal{O}_h$ lies on $\partial\mathcal{O}$. Consequently, $\mathcal{O}_h$ is a convex polyhedral approximation of $\mathcal{O}$ satisfying
\begin{equation}
  \label{eq:cond-Kh}
  \max_{x \in \partial\mathcal{O}_h} \mathrm{dist}(x, \partial\mathcal{O}) \leqslant C_{\varrho,\mathcal{O}} h^2.
\end{equation}
Fix $ q \in (3, \infty) $, and let $ v \in X_1^q $, $ g \in X_0^{q'} $.
Define $ w := A_{q'}^{-1} g $.
By \cref{eq:Aq-W12}, the definition of $ R_h $, and the fact $ R_h v = P_hw = 0 $ in $\mathcal O\setminus\mathcal O_h $, we have
\begin{align*}
\int_{\mathcal{O}} (v - R_h v) g \, \mathrm{d}x 
& = \int_{\mathcal O} \nabla(v-R_hv) \cdot \nabla w \, \mathrm{d}x \\
& = \int_{\mathcal O} \nabla(v-R_hv) \cdot \nabla(w-P_hw) \, \mathrm{d}x \\
&= \int_{\mathcal{O} \setminus \mathcal{O}_h} \nabla v \cdot \nabla w \, \mathrm{d}x + \int_{\mathcal{O}_h} \nabla(v - R_h v) \cdot \nabla(w - P_h w) \, \mathrm{d}x \\
&=: I_1 + I_2.
\end{align*}
Applying integration by parts to $I_1$, we obtain
\begin{align*}
  I_1 & = \int_{\mathcal{O} \setminus \mathcal{O}_h} v(-\Delta w) \, \mathrm{d}x
  - \int_{\partial\mathcal{O}_h} v \frac{\partial w}{\partial\nu} \, \mathrm{d}\sigma,
\end{align*}
where $\frac{\partial w}{\partial\nu}$ denotes the outward normal derivative and $\mathrm{d}\sigma$ the surface measure on $\partial\mathcal{O}_h$.
By Hölder’s inequality and the trace inequality (see, e.g., \cite[Theorem III.2.36]{Boyer2012}), it follows that
\begin{align*}
  I_1 & \leqslant C_{\mathcal{O}} \left(  \| v \|_{L^q(\mathcal{O} \setminus \mathcal{O}_h)}\| \Delta w \|_{L^{q'}(\mathcal{O} \setminus \mathcal{O}_h)}  + \| v \|_{L^{2q/3}(\partial\mathcal{O}_h)} \| \nabla w \|_{L^{2q/(2q-3)}(\partial\mathcal{O}_h; \mathbb{R}^3)}  \right) \\
      & \leqslant C_{\mathcal{O}} \left( \| v \|_{L^q(\mathcal{O} \setminus \mathcal{O}_h)} + \| v \|_{L^{2q/3}(\partial\mathcal{O}_h)} \right) \| w \|_{X_1^{q'}}.
\end{align*}
Using \cref{eq:cond-Kh}, the boundary condition $ v = 0 $ on $ \partial\mathcal{O} $, and Sobolev's embedding theorem,
an elementary calculation yields
\begin{align*}
  \| v \|_{L^q(\mathcal{O} \setminus \mathcal{O}_h)}
  &\leqslant C_{\varrho,\mathcal{O}} h^2 \| \nabla v \|_{L^q(\mathcal{O} \setminus \mathcal{O}_h; \mathbb{R}^3)} \leqslant C_{\varrho,\mathcal{O}} h^2 \| v \|_{X_{1/2}^q}, \\
  \| v \|_{L^{2q/3}(\partial\mathcal{O}_h)}
  &\leqslant C_{\varrho,\mathcal O} h^2 \| \nabla v \|_{C(\overline{\mathcal{O}}; \mathbb{R}^3)}
  \leqslant C_{\varrho,\mathcal{O}} h^2 \| v \|_{X_1^q}.
\end{align*}
Combining these estimates, we conclude
$$
I_1 \leqslant C_{\varrho,\mathcal{O}} h^2 \| v \|_{X_1^q} \| w \|_{X_1^{q'}}.
$$
For $ I_2 $, applying \cref{eq:Rh-conv-W1q} and Lemma~\ref{lem:Ph}(iii), we deduce
$$
I_2 \leqslant \| \nabla(v - R_h v) \|_{L^q(\mathcal{O}_h; \mathbb{R}^3)} \| \nabla(w - P_h w) \|_{L^{q'}(\mathcal{O}_h; \mathbb{R}^3)} \leqslant C_{q,\varrho,\mathcal{O}} h^2 \| v \|_{X_1^q} \| w \|_{X_1^{q'}}.
$$
Therefore, combining both terms, we obtain
$$
\int_{\mathcal{O}} (v - R_h v) g \, \mathrm{d}x \leqslant C_{q,\varrho,\mathcal{O}} h^2 \| v \|_{X_1^q} \| w \|_{X_1^{q'}} \leqslant C_{q,\varrho,\mathcal{O}} h^2 \| v \|_{X_1^q} \| g \|_{X_0^{q'}},
$$
which holds for all $ v \in X_1^q $ and $ g \in X_0^{q'} $. This establishes \cref{eq:Rh-conv-1} for $ q \in (3, \infty) $.
Next, observe that the equality $ \| I - R_h \|_{\mathcal{L}(X_1^q, X_0^q)} = \| I - R_h \|_{\mathcal{L}(X_1^{q'}, X_0^{q'})} $ implies that \cref{eq:Rh-conv-1} also holds for $ q \in (1, 3/2) $. Interpolation then extends the result to $ q \in [3/2, 3] $, completing the proof of property (i).

\textbf{Step 4.}  
We now prove property (ii). Using \cref{eq:cond-Kh} and the boundary condition \( v = 0 \) on \( \partial\mathcal{O} \), one can show
\[
\| v \|_{L^{2q/3}(\partial\mathcal{O}_h)} \leqslant C_{q,\varrho,\mathcal{O}} h^{2 - 2/q} \| \nabla v \|_{L^q(\mathcal{O}; \mathbb{R}^3)}
\leqslant C_{q,\varrho,\mathcal{O}} h^{2 - 2/q} \| v \|_{X_{1/2}^q}, \quad \forall v \in X_{1/2}^q, \; \forall q \in (3/2, \infty).
\]
From \cref{eq:Rhv-W1q-stab}, we also have
\[
\| \nabla(v - R_h v) \|_{L^q(\mathcal{O}_h; \mathbb{R}^3)} \leqslant C_{q,\varrho,\mathcal{O}} \| v \|_{X_{1/2}^q}, \quad \forall v \in X_{1/2}^q, \; \forall q \in (1, \infty).
\]
With these estimates, a modification of the argument in Step 3 yields
\[
\int_{\mathcal{O}} (v - R_h v) g \, \mathrm{d}x \leqslant C_{q,\varrho,\mathcal{O}} h \| v \|_{X_{1/2}^q} \| g \|_{X_0^{q'}}, \quad \forall v \in X_{1/2}^q, \; \forall g \in X_0^{q'}, \; \forall q \in [2, \infty).
\]
Since this holds for all \( v \in X_{1/2}^q \) and \( g \in X_0^{q'} \), the desired estimate \eqref{eq:Rh-conv-2} follows. This completes the proof of (ii).

\end{proof}

Finally, we present a standard technical estimate.
\begin{lemma}
  \label{lem:z-Ah}
  Let $\theta \in (0, \tfrac{\pi}{2})$ and $q \in (1, \infty)$.
  Then, for every $z \in \mathbb{C} \setminus \Sigma_{\theta}$, we have
  \[
    \norm{(z - \mathcal{A}_h)^{-1} P_h - (z - A_q)^{-1}}_{\mathcal{L}(X_0^q)}
    \leqslant C_{\theta, q, \varrho, \mathcal{O}} h^2.
  \]
\end{lemma}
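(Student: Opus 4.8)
The plan is to rewrite the difference $(z-\mathcal A_h)^{-1}P_h - (z-A_q)^{-1}$ as a composition of operators each of which is either uniformly bounded (in the relevant $L^q$ operator norms) or carries an $h^2$ factor, and then to exploit the Ritz projection error estimate from Lemma~\ref{lem:Rh}(i). Set $u := (z-A_q)^{-1}v$ for $v \in X_0^q$ and $u_h := (z-\mathcal A_h)^{-1}P_h v$, so that $u$ solves $zu - A_q u = v$ and $u_h$ solves $z u_h - \mathcal A_h u_h = P_h v$ in $X_h$. First I would observe the Galerkin-type identity: testing both equations against $v_h \in X_h$ and using the definitions of $\mathcal A_h$, $P_h$, and \eqref{eq:Aq-W12}, one finds $z\int_{\mathcal O}(u-u_h)v_h\,\mathrm dx + \int_{\mathcal O_h}\nabla(u_h - u)\cdot\nabla v_h\,\mathrm dx = 0$ (modulo the discrepancy between $\mathcal O$ and $\mathcal O_h$, which is $O(h^2)$ and handled exactly as in the proof of Lemma~\ref{lem:Rh}). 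This means $u_h$ is, up to the domain-perturbation term, the ``$z$-shifted Ritz projection'' of $u$; equivalently $u_h = R_h u - z(z-\mathcal A_h)^{-1}P_h(u - R_h u)$ after rearranging, since $\mathcal A_h R_h u = P_h A_q u$.

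From this representation I would estimate
\[
  \|u - u_h\|_{X_0^q} \leqslant \|u - R_h u\|_{X_0^q} + |z|\,\big\|(z-\mathcal A_h)^{-1}P_h\big\|_{\mathcal L(X_0^q)}\,\|u - R_h u\|_{X_0^q}.
\]
The key inputs are: Lemma~\ref{lem:Rh}(i), which gives $\|u - R_h u\|_{X_0^q} \leqslant C h^2 \|u\|_{X_1^q}$; the resolvent bound \eqref{eq:z-Ah-inv} (with $P_h$ inserted, which is legitimate since $P_h$ acts as the identity on $X_h$ and is $X_0^q$-stable by Lemma~\ref{lem:Ph}(i)), giving $|z|\,\|(z-\mathcal A_h)^{-1}P_h\|_{\mathcal L(X_0^q)} \leqslant C_{\theta,q,\varrho,\mathcal O}$ uniformly for $z \in \mathbb C\setminus\Sigma_\theta$; and finally the continuous-level resolvent estimate from Lemma~\ref{lem:Aq}(i): $\|u\|_{X_1^q} = \|A_q u\|_{X_0^q} = \|A_q(z-A_q)^{-1}v\|_{X_0^q} \leqslant C_{\theta,q,\mathcal O}\|v\|_{X_0^q}$. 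Chaining these yields $\|u - u_h\|_{X_0^q} \leqslant C_{\theta,q,\varrho,\mathcal O}\, h^2\,\|v\|_{X_0^q}$, which is exactly the claim.

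The main obstacle, and the only place real care is needed, is the geometric discrepancy between $\mathcal O$ and $\mathcal O_h$: the Galerkin orthogonality holds with respect to $\int_{\mathcal O_h}\nabla\cdot\nabla$ while $u$ solves the PDE on $\mathcal O$, so the clean identity $u_h = R_h u - z(\cdots)$ picks up a boundary-layer correction supported on $\mathcal O\setminus\mathcal O_h$ and on $\partial\mathcal O_h$. This term is controlled by precisely the estimates already assembled in Step~3 of the proof of Lemma~\ref{lem:Rh} — namely \eqref{eq:cond-Kh}, the trace inequality on $\partial\mathcal O_h$, and the vanishing of $u$ on $\partial\mathcal O$ — all of which contribute at order $h^2\|u\|_{X_1^q}$, hence do not degrade the rate. (Alternatively, one can absorb the domain perturbation directly into the definition of the ``discrete solution operator'' as in \cite{Bakaev2002}.) Everything else is a routine triangle-inequality argument, and the $\theta$-dependence of the constants enters only through Lemmas~\ref{lem:Aq} and~\ref{lem:z-Ah-inv}.
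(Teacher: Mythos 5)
Your proof is correct, and it rests on the same key ingredients as the paper's (\cref{lem:Rh}(i), the uniform resolvent bounds of \cref{lem:Aq}(i) and \cref{lem:z-Ah-inv}, and the $L^q$-stability of $P_h$ from \cref{lem:Ph}(i)), but it uses a different decomposition. You argue via the error equation: with $u=(z-A_q)^{-1}v$, $u_h=(z-\mathcal{A}_h)^{-1}P_hv$ and the identity $\mathcal{A}_hR_h=P_hA_q$, one obtains exactly $u_h-u=(R_h-I)u+z(z-\mathcal{A}_h)^{-1}P_h(I-R_h)u$ (your version carries a flipped sign on the second term, which is immaterial for the norm estimate), so everything reduces to $\norm{(I-R_h)u}_{X_0^q}\leqslant C h^2\norm{u}_{X_1^q}$ together with $|z|\,\norm{(z-\mathcal{A}_h)^{-1}P_h}_{\mathcal{L}(X_0^q)}\leqslant C$ and $\norm{u}_{X_1^q}\leqslant C\norm{v}_{X_0^q}$. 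The paper instead first replaces $(z-A_q)^{-1}$ by $P_h(z-A_q)^{-1}$ at cost $O(h^2)$ and then uses the commutator-type identity $(z-\mathcal{A}_h)^{-1}P_h-P_h(z-A_q)^{-1}=(z-\mathcal{A}_h)^{-1}\mathcal{A}_h\,(P_hA_q^{-1}-\mathcal{A}_h^{-1}P_h)\,A_q(z-A_q)^{-1}$, sandwiching the $O(h^2)$ factor $(P_h-R_h)A_q^{-1}$ between two uniformly bounded resolvent operators. The two routes are equivalent in substance; yours is marginally more economical, since it never needs to compare $(z-A_q)^{-1}$ with $P_h(z-A_q)^{-1}$ and uses only the $R_h$-error, not the $P_h$-error on $X_1^q$. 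One correction of emphasis: the geometric discrepancy between $\mathcal{O}$ and $\mathcal{O}_h$ that you single out as ``the main obstacle'' is actually a non-issue at this stage. Since every $v_h\in X_h$ vanishes on $\overline{\mathcal{O}}\setminus\mathcal{O}_h$, the relation $\mathcal{A}_hR_hu=P_hA_qu$ holds exactly, so your representation of $u_h$ is an exact operator identity with no boundary-layer correction; all of the domain-perturbation difficulty is already encapsulated in the proof of \cref{lem:Rh}(i), which both arguments invoke as a black box.
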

\begin{proof}
  We begin by noting that
  \begin{align*}
    \norm{(P_h-I)(z-A_q)^{-1}}_{\mathcal L(X_0^q)}
    & \leqslant
    \norm{P_h-I}_{\mathcal L(X_1^q,X_0^q)} 
    \norm{(z-A_q)^{-1}}_{\mathcal L(X_0^q,X_1^q)} \\
    & \leqslant
    C_{\theta,q,\varrho,\mathcal O} h^2,
  \end{align*}
  where the last inequality is justified by \cref{lem:Aq}(i) and \cref{lem:Ph}(ii).
  It suffices, therefore, to prove that
  \[
    \norm{(z-\mathcal{A}_h)^{-1}P_h - P_h(z-A_q)^{-1}}_{\mathcal L(X_0^q)}
    \leqslant C_{\theta,q,\varrho,\mathcal O} h^2.
  \]

  To this end, consider the following algebraic manipulation:
  \begin{align*}
    & (z-\mathcal{A}_h)^{-1}P_h - P_h(z-A_q)^{-1} \\
    ={}&
    (z-\mathcal{A}_h)^{-1}(P_h(z-A_q) - (z-\mathcal{A}_h)P_h)(z-A_q)^{-1} \\
    ={}&
    (z-\mathcal{A}_h)^{-1}(\mathcal{A}_hP_h - P_hA_q)(z-A_q)^{-1} \\
    ={}&
    (z-\mathcal{A}_h)^{-1}\mathcal{A}_h(P_hA_q^{-1} - \mathcal{A}_h^{-1}P_h)A_q(z-A_q)^{-1}.
  \end{align*}
  Subsequently,
  \begin{align*}
    & \norm{
      (z-\mathcal{A}_h)^{-1}P_h - P_h(z-A_q)^{-1}
    }_{\mathcal L(X_0^q)} \\
    \leqslant{}&
    \norm{(z-\mathcal{A}_h)^{-1}\mathcal{A}_h}_{\mathcal L(X_{0,h}^q)}
    \norm{P_hA_q^{-1} - \mathcal{A}_h^{-1}P_h}_{\mathcal L(X_0^q)}
    \norm{A_q(z-A_q)^{-1}}_{\mathcal L(X_0^q)} \\
    \stackrel{\text{(i)}}{\leqslant}{} &
    C_{\theta,q,\varrho,\mathcal O}
    \norm{P_hA_q^{-1} - \mathcal{A}_h^{-1}P_h}_{\mathcal L(X_0^q)} \\
    \stackrel{\text{(ii)}}{=}{} &
    C_{q,\varrho,\mathcal O}
    \norm{(P_h - R_h)A_q^{-1}}_{\mathcal L(X_0^q)} \\
    \leqslant{} & 
    C_{\theta,q,\varrho,\mathcal O} \Big(
    \norm{I-P_h}_{\mathcal L(X_1^q,X_0^q)}
    + \norm{I - R_h}_{\mathcal L(X_1^q,X_0^q)}
    \Big) \norm{A_q^{-1}}_{\mathcal L(X_0^q,X_1^q)} \\
    \stackrel{\text{(iii)}}{\leqslant}{} &
    C_{\theta,q,\varrho,\mathcal O} h^2,
  \end{align*}
  where step (i) follows from \cref{lem:Aq}(i) and
  inequality \cref{eq:Ah*z-Ah}, step (ii) exploits the identity $ R_h = \mathcal{A}_h^{-1}P_hA_q $,
  and step (iii) is a direct consequence of \cref{lem:Ph}(ii) and
  \cref{lem:Rh}(i),
  combined with the fact $ \norm{A_q^{-1}}_{\mathcal L(X_0^q,X_1^q)} = 1 $.
  This completes the proof.
\end{proof}

\subsection{Proof of \texorpdfstring{\cref{thm:Ah-Hinf}}{}}
\label{ssec:1}
The proof proceeds in seven steps: assertions (i) and (ii) follow from
Steps 1–5 and Step 6 respectively, while assertion (iii) is established in Step 7.
The argument relies on \cref{lem:Aq,lem:etAh,lem:z-Ah,lem:Ph,lem:etAh}.

\textbf{Step 1.} We prove that
\begin{equation} 
  \label{eq:wtAq-Hinf}
  \norm{\phi(\widetilde A_q)}_{\mathcal L(X_0^q)} \leqslant
  C_{\theta,q,\mathcal O} \norm{\phi}_{\mathcal H_0^\infty(\Sigma_\theta)}
  \quad\text{for all } \phi \in \mathcal H_0^\infty(\Sigma_\theta),
\end{equation}
where the operator \( \widetilde A_q \) is defined as
\begin{equation} 
  \label{eq:wtAq}
  \widetilde A_q := (h^2 + A_q) (1 + h^2 A_q)^{-1}.
\end{equation}
The introduction of \( \widetilde A_q \) is motivated by \cite[Section~3.1]{Pruss2016}.
Furthermore, according to Proposition~3.1.4 from \cite{Pruss2016},
$ \widetilde A_q $ is bounded, has a bounded inverse, and is a sectorial operator.
For any $ z \in \partial\Sigma_\theta $, a direct calculation gives
\begin{align*}
(z - \widetilde A_q)^{-1}
&= (1 + h^2 A_q) \left( z + h^2z A_q - h^2 - A_q \right)^{-1} \\
&= \frac{1}{1 - h^2z} (1 + h^2 A_q) \left( \frac{z - h^2}{1 - h^2z} - A_q \right)^{-1} \\
&=  \frac{h^2}{1 - h^2z} \left[
h^{-2} + \frac{z-h^2}{1-h^2z} - \left(\frac{z - h^2}{1 - h^2z} - A_q\right) 
\right] \left( \frac{z - h^2}{1 - h^2z} - A_q \right)^{-1} \\
&= \frac{h^2}{1 - h^2z} \left[
\frac{h^{-2} - h^2}{1 - h^2z} - \left( \frac{z - h^2}{1 - h^2z} - A_q \right) 
\right] \left( \frac{z - h^2}{1 - h^2z} - A_q \right)^{-1} \\
&= \frac{1 - h^4}{(1 - h^2z)^2} \left( \frac{z - h^2}{1 - h^2z} - A_q \right)^{-1} - \frac{h^2}{1 - h^2z}.
\end{align*}
  Choose any $ \phi \in \mathcal H_0^\infty(\Sigma_\theta) $.
  Invoking the integral identity
  \[
    \phi(\widetilde A_q) = \frac{1}{2\pi i}
    \int_{\partial\Sigma_\theta} \phi(z)(z - \widetilde A_q)^{-1} \, \mathrm{d}z,
  \]
  and using Cauchy's formula, which implies
  \[
    \frac1{2\pi i}\int_{\partial\Sigma_\theta} \phi(z) \frac{h^2}{1 - h^2z}
    \, \mathrm{d}z = -\phi(h^{-2}),
  \]
  we deduce that
  \begin{equation}
    \label{eq:731}
    \phi(\widetilde A_q) = \phi(h^{-2}) + \frac{1}{2\pi i}
    \int_{\partial\Sigma_\theta} \phi(z) \frac{1 - h^4}{(1 - h^2z)^2} \left( \frac{z - h^2}{1 - h^2z} - A_q \right)^{-1} \,
    \mathrm{d}z. 
  \end{equation}
  Since $ |\operatorname{Arg}((z-h^2)/(1-h^2z)) | > \theta $ for all $ z \in \partial\Sigma_{\theta} $,
  it is straightforward to verify that the integral in the above equation is indeed convergent
  by part (i) of \cref{lem:Aq}. Fix any $ 0 < \theta' < \theta $ and define
  \begin{equation*}
    \varphi(\lambda) := \phi\Big(\frac{{h^2}+\lambda}{1+{h^2}\lambda}\Big) - \phi\Big(\frac1{h^2}\Big),
    \quad \forall \lambda \in \overline{\Sigma_{\theta'}}.
  \end{equation*}
Given that \( \phi \) is analytic in \( \Sigma_\theta \) and uniformly
bounded therein, an elementary computation confirms that \( \varphi \) is analytic
on \( \overline{\Sigma_{\theta'}} \), and satisfies
\[
    \limsup_{r \to \infty} \sup_{\omega \in [-\theta', \theta']} r |\varphi(re^{i\omega})| < \infty.
\]
  For the second term in the right-hand side of \cref{eq:731}, 
  we perform the following analysis:
  \begin{align*} 
  & \frac1{2\pi i}\int_{\partial\Sigma_\theta} \phi(z)
  \frac{1-{h^4}}{(1-{h^2} z)^2}
  \left(
    \frac{z - {h^2}}{1 - {h^2} z} - A_q
  \right)^{-1} \, \mathrm{d}z \\
  \stackrel{\text{(i)}}{=} &
  \frac1{2\pi i}\int_{\partial\Sigma_\theta} \phi(z)
  \frac{1-{h^4}}{(1-{h^2} z)^2} \frac1{2\pi i}
  \int_{\partial\Sigma_{\theta'}}
  \left(
    \frac{z - {h^2}}{1 - {h^2} z} - \lambda
  \right)^{-1} (\lambda - A_q)^{-1} \, \mathrm{d}\lambda \, \mathrm{d}z \\
  \stackrel{\text{(ii)}}{=} &
  \frac1{2\pi i}\int_{\partial\Sigma_{\theta'}}  \frac1{2\pi i}
  \int_{\partial\Sigma_{\theta}}
  \phi(z)
  \frac{1-{h^4}}{(1-{h^2} z)^2}
  \left(
    \frac{z - {h^2}}{1 - {h^2} z} - \lambda
  \right)^{-1}  \, \mathrm{d}z \, (\lambda - A_q)^{-1} \, \mathrm{d}\lambda \\
  ={}&
  \frac1{2\pi i}\int_{\partial\Sigma_{\theta'}}  \frac1{2\pi i}
  \int_{\partial\Sigma_{\theta}}
  \phi(z) \left[
    \frac1{z - \frac{h^2+\lambda}{1+h^2\lambda}} -
    \frac1{z - \frac1{h^2}} 
  \right] \, \mathrm{d}z \, (\lambda - A_q)^{-1} \, \mathrm{d}\lambda,
\end{align*}
where in (i) we use the fact that $ (z-h^2)/(1-h^2z) \not\in \overline{\Sigma_{\theta'}} $,
and in (ii) we apply Fubini's theorem to interchange the order of integration.
Since $ (h^2+\lambda)/(1 + h^2\lambda) $ belongs to $ \Sigma_{\theta} $
for all $ \lambda \in \partial\Sigma_{\theta'} $,
using Cauchy's formula and the definition of $ \varphi $, we can further deduce that
\begin{align*} 
  & \frac1{2\pi i}\int_{\partial\Sigma_\theta} \phi(z)
  \frac{1-{h^4}}{(1-{h^2} z)^2}
  \left(
    \frac{z - {h^2}}{1 - {h^2} z} - A_q
  \right)^{-1} \, \mathrm{d}z \\
  ={} &
  \frac1{2\pi i} \int_{\partial\Sigma_{\theta'}}  \left[
    \phi\Big(\frac{h^2+\lambda}{1+h^2\lambda}\Big) -
    \phi\Big(\frac1{h^2}\Big)
  \right] \, (\lambda - A_q)^{-1} \, \mathrm{d}\lambda \\
  ={}&
  \frac1{2\pi i}\int_{\partial\Sigma_{\theta'}}
  \varphi(\lambda) (\lambda - A_q)^{-1} 
  \, \mathrm{d}\lambda \\
  ={} &
  \varphi(A_q).
\end{align*}
By combining the above result with the equality \cref{eq:731}, we obtain
\[
  \phi(\widetilde A_q) = \phi(h^{-2}) + \varphi(A_q).
\]
Using Corollary~3.1.11 from \cite{Pruss2016}, we further arrive at
\[
  \phi(\widetilde A_q) = \phi(h^{-2}) + \varphi(0)(1+A_q)^{-1} + \Psi(A_q),
\]
where the function $ \Psi $ is defined as
\[
  \Psi(\lambda) := \varphi(\lambda) - \frac{\varphi(0)}{1+\lambda},
  \quad \forall \lambda \in \Sigma_{\theta'}.
\]
This leads to the inequality
\[
  \norm{\phi(\widetilde A_q)}_{\mathcal L(X_0^q)}
  \leqslant \abs{\phi(h^{-2})} + \abs{\varphi(0)} \norm{(1+A_q)^{-1}}_{\mathcal L(X_0^q)} +
  \norm{\Psi(A_q)}_{\mathcal L(X_0^q)}.
\]
The desired inequality \cref{eq:wtAq-Hinf} then follows from \cref{lem:Aq},
alongside the observation that $ \Psi \in \mathcal H_0^\infty(\Sigma_{\theta'}) $ and
\[
  \abs{\phi(h^{-2})} + \abs{\varphi(0)} +
  \norm{\Psi}_{\mathcal H_0^\infty(\Sigma_{\theta'})}
  \leqslant C_\theta \norm{\phi}_{\mathcal H_0^\infty(\Sigma_{\theta})}.
\]

\textbf{Step 2.} For any $ \phi \in \mathcal H_0^\infty(\Sigma_\theta) $,
we aim to establish the decomposition
\begin{equation}
  \label{eq:918}
  \phi(\mathcal{A}_h) P_h - \phi(\widetilde A_q) =
  I_1(\phi) + I_2(\phi) + I_3(\phi),
\end{equation}
where 
\begin{align}
  I_1(\phi) &:= \frac1{2\pi i} \int_{\Gamma_1}
  \phi(z)\left[ (z-\mathcal{A}_h)^{-1}P_h - (z-\widetilde A_q)^{-1} \right] \, \mathrm{d}z,
  \label{eq:I1} \\
  I_2(\phi) &:= \frac1{2\pi i} \int_{\Gamma_2}
  \phi(z)\left[ (z-\mathcal{A}_h)^{-1}P_h - (z-\widetilde A_q)^{-1} \right] \, \mathrm{d}z,
  \label{eq:I2} \\
  I_3(\phi) &:= \frac1{2\pi i} \int_{\Gamma_3} \phi(z)\left[ (z-\mathcal{A}_h)^{-1}P_h - (z-\widetilde A_q)^{-1} \right] \, \mathrm{d}z.
  \label{eq:I3}
\end{align}
The contours $\Gamma_1$, $\Gamma_2$, and $\Gamma_3$ are parametrized by
\begin{align*}
  \Gamma_1 &: z = re^{i\theta}, \quad 0 \leqslant r \leqslant \frac{c^*}{h^2 \cos \theta},  \\
  \Gamma_2 &: z = re^{-i\theta}, \quad 0 \leqslant r \leqslant \frac{c^*}{h^2 \cos \theta}, \\
  \Gamma_3 &: z = \frac{c^*}{h^{2}} + i\xi, \quad -\frac{c^*}{h^{2}} \tan \theta \leqslant \xi \leqslant \frac{c^*}{h^{2}} \tan \theta,
\end{align*}
with orientations as illustrated in \Cref{fig:I1I2I3}. 
\begin{figure}[htbp]
    \centering
    \includegraphics[width=0.5\textwidth]{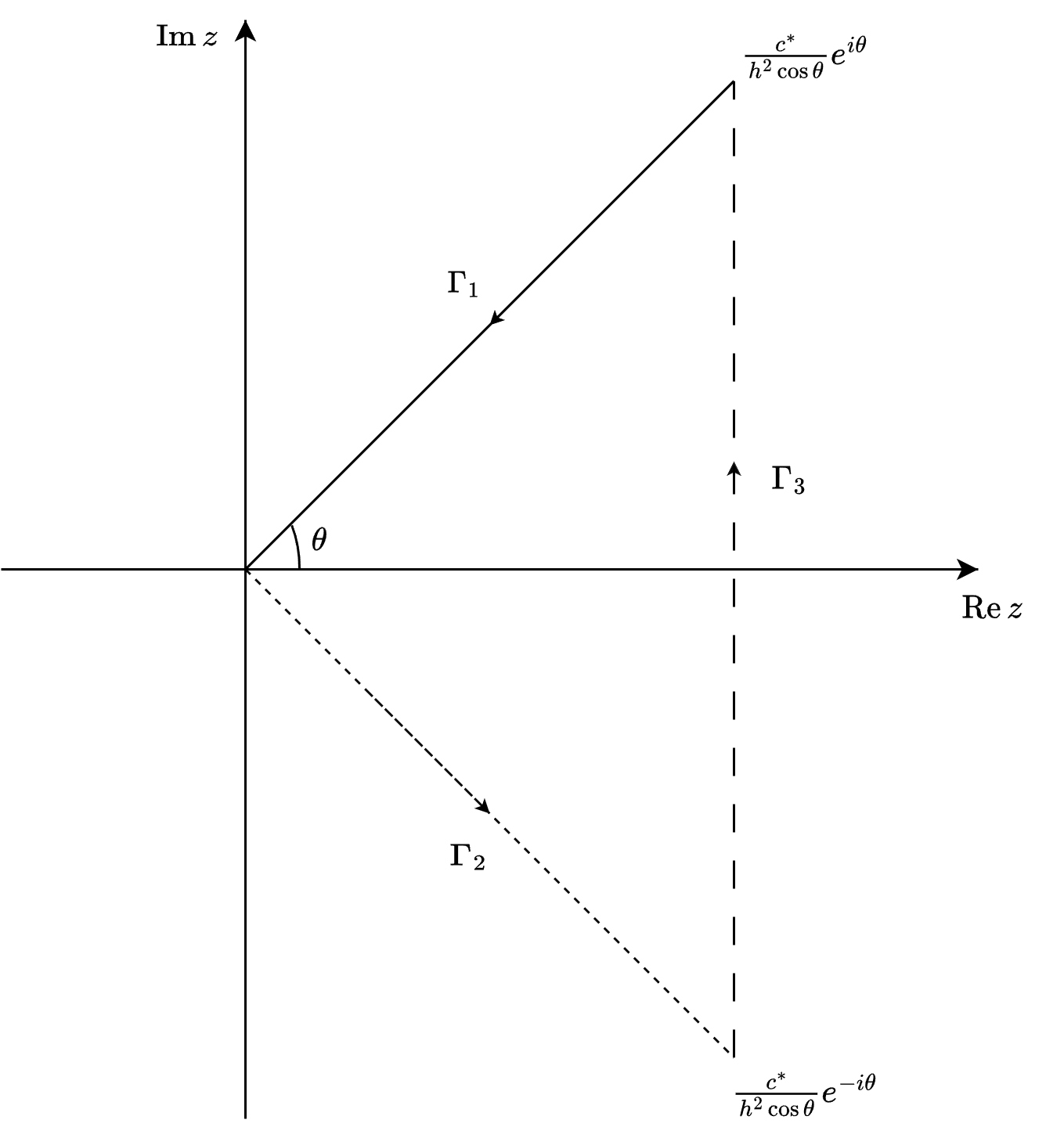}
    \caption{$\Gamma_1$, $\Gamma_2$, and $\Gamma_3$.}
    \label{fig:I1I2I3}
\end{figure}
Here, $ c^* $ is a positive constant depending only on $ q $, $ \varrho $, and the
geometric characteristics of $ \mathcal O $.
From the definition of $ \widetilde A_q $ in \cref{eq:wtAq}
and the properties of $ A_q $ stated in part (i) of \cref{lem:Aq}, we deduce that
\begin{align*}
  \norm{\widetilde A_q}_{\mathcal{L}(X_0^q)}
  &\leqslant h^2 \norm{(1 + h^2 A_q)^{-1}}_{\mathcal{L}(X_0^q)} +
  \norm{A_q(1+h^2A_q)^{-1}}_{\mathcal{L}(X_0^q)} \\
  &= \norm{(h^{-2} + A_q)^{-1}}_{\mathcal{L}(X_0^q)} +
  h^{-2}\norm{A_q(h^{-2}+A_q)^{-1}}_{\mathcal{L}(X_0^q)} \\
  &\leqslant C_{q,\mathcal{O}} h^{-2}.
\end{align*}
Moreover, \cref{lem:etAh}(ii) with $ \alpha = 1 $ gives
$ \norm{\mathcal{A}_h}_{\mathcal{L}(X_{0,h}^q)} \leqslant C_{q,\varrho,\mathcal{O}} h^{-2} $.
Consequently, there exists a positive constant \(c^*\), solely determined
by \(q\), \(\varrho\), and the geometric characteristics of \(\mathcal{O}\),
such that
\begin{equation}
  \label{eq:7000}
  c^* h^{-2} \geqslant 2 \max\left\{
    \norm{\mathcal{A}_h}_{\mathcal{L}(X_{0,h}^q)}, \,
    \norm{\widetilde A_q}_{\mathcal{L}(X_0^q)}
  \right\},
\end{equation}
yielding the resolvent set inclusion
\[
  \left\{z \in \mathbb{C} \, \middle| \,
  \operatorname{Re} z \geqslant c^* h^{-2} \right\}
  \subseteq \rho(\widetilde A_q) \cap \rho(\mathcal{A}_h).
\]
Given that 
\[
  \mathbb{C} \setminus (0,\infty) \subseteq \rho(\widetilde A_q) \cap \rho(\mathcal{A}_h),
\]
we can refine this inclusion to yield
\begin{equation}
  \label{eq:786}
  \mathbb{C} \setminus (0,c^*h^{-2})
  \subseteq \rho(\widetilde A_q) \cap \rho(\mathcal{A}_h).
\end{equation}
Therefore, for any \( \phi \in \mathcal{H}_0^\infty(\Sigma_\theta) \), utilizing the identity
\begin{align*}
  \phi(\mathcal{A}_h) P_h - \phi(\widetilde A_q) =
  \frac1{2\pi i} \int_{\partial\Sigma_\theta}
  \phi(z)\left[ (z-\mathcal{A}_h)^{-1}P_h - (z-\widetilde A_q)^{-1} \right]
  \, \mathrm{d}z,
\end{align*}
we can invoke Cauchy's integral theorem to deform the contour of integration,
thereby achieving the desired decomposition \cref{eq:918}.

\textbf{Step 3.} We proceed to prove that
\begin{equation}
  \label{eq:I1+I2}
  \norm{I_1(\phi) + I_2(\phi)}_{\mathcal L(X_0^q)}
  \leqslant C_{\theta,q,\varrho,\mathcal O}
  \norm{\phi}_{\mathcal H_0^\infty(\Sigma_\theta)}
  \quad\text{for all } \phi \in \mathcal H_0^\infty(\Sigma_\theta).
\end{equation}
To this end, consider an arbitrary element $\phi$ within $\mathcal{H}_0^\infty(\Sigma_\theta)$.
For any $z$ on the boundary $\partial\Sigma_{\theta}$, the following identity holds:
\begin{align*}
(z-A_q)^{-1} - (z-\widetilde A_q)^{-1} &= (A_q - \widetilde A_q)(z-\widetilde A_q)^{-1}(z-A_q)^{-1} \\
&= \frac{h^2}{1-h^2z} (A_q^2 - 1) \left( \frac{z-h^2}{1-h^2z} - A_q \right)^{-1} (z-A_q)^{-1},
\end{align*}
where the last step uses the definition of $ \widetilde A_q $ in \cref{eq:wtAq}.
Given that $\frac{z-h^2}{1-h^2z}$ lies exterior to $\Sigma_\theta$ for
all $z \in \partial\Sigma_\theta$, using the properties of $ A_q $ in part (i) of \cref{lem:Aq},
we confirm the uniform boundedness in $ \mathcal L(X_0^q) $ of the operators
\[
A_q^2\left( \frac{z-h^2}{1-h^2z} - A_q \right)^{-1} (z-A_q)^{-1}
\]
and
\[
\left( \frac{z-h^2}{1-h^2z} - A_q \right)^{-1} (z-A_q)^{-1}
\]
with respect to both $h$ and $z \in \partial\Sigma_\theta $.
This leads to the conclusion that, for any $z \in \partial\Sigma_\theta$,
\[
\norm{(z-A_q)^{-1} - (z-\widetilde A_q)^{-1}}_{\mathcal{L}(X_0^q)} \leqslant C_{\theta,q,\varrho,\mathcal{O}} \frac{h^2}{|1-h^2z|},
\]
which, combined with the inequality
\[
\frac{1}{|1-h^2z|} \leqslant \frac{1}{\sqrt{(1-\cos\theta)(1+h^4|z|^2)}}
\leqslant \frac{1}{\sqrt{1-\cos\theta}}, \quad \forall z \in \partial\Sigma_\theta,
\]
yields
\[
\norm{(z-A_q)^{-1} - (z-\widetilde A_q)^{-1}}_{\mathcal{L}(X_0^q)} \leqslant C_{\theta,q,\varrho,\mathcal{O}} h^2.
\]
Combining this result with the estimate from \cref{lem:z-Ah}, we attain
\[
  \norm{(z-\mathcal{A}_h)^{-1} P_h - (z-\widetilde A_q)^{-1}}_{\mathcal{L}(X_0^q)} 
  \leqslant{}
  C_{\theta,q,\varrho,\mathcal{O}} h^2, \quad \forall z \in \partial\Sigma_{\theta}.
\]
Using this inequality, together with \cref{eq:I1,eq:I2}, we deduce that
\begin{align*}
 &\norm{I_1(\phi) + I_2(\phi)}_{\mathcal{L}(X_0^q)} \\
 \leqslant{}
 & C_{\theta,q,\varrho,\mathcal{O}}
 \norm{\phi}_{\mathcal{H}_0^\infty(\Sigma_\theta)}
 h^{-2} \max_{z \in \partial\Sigma_\theta}
 \norm{(z-\mathcal{A}_h)^{-1}P_h - (z-\widetilde A_q)^{-1}}_{\mathcal{L}(X_0^q)} \\
 \leqslant{}
 & C_{\theta,q,\varrho,\mathcal O} \norm{\phi}_{\mathcal H_0^\infty(\Sigma_\theta)}.
\end{align*}
This confirms the claim \cref{eq:I1+I2}.

\textbf{Step 4.} Let us prove that 
\begin{equation}
  \label{eq:I3-bound}
  \norm{I_3(\phi)}_{\mathcal L(X_0^q)}
  \leqslant C_{\theta,q,\varrho,\mathcal O}
  \norm{\phi}_{\mathcal H_0^\infty(\Sigma_\theta)}
  \quad\text{for all } \phi \in \mathcal H_0^\infty(\Sigma_\theta).
\end{equation}
For any $z$ in the complex segment $c^*h^{-2} + i(-c^*h^{-2} \tan\theta, c^*h^{-2} \tan\theta)$,
we begin by bounding the norm as follows:
\begin{align*}
    & \norm{(z-\mathcal{A}_h)^{-1}P_h - (z-\widetilde A_q)^{-1}}_{\mathcal{L}(X_0^q)} \\
    \leqslant{} 
    & \norm{(z-\mathcal{A}_h)^{-1}P_h}_{\mathcal{L}(X_0^q)} + 
    \norm{(z-\widetilde A_q)^{-1}}_{\mathcal{L}(X_0^q)} \\
    \leqslant{}
    & C_{q,\varrho} \norm{(z-\mathcal{A}_h)^{-1}}_{\mathcal{L}(X_{0,h}^q)} + 
    \norm{(z-\widetilde A_q)^{-1}}_{\mathcal{L}(X_0^q)}
    \quad\text{(by \cref{lem:Ph}(i))} \\
    \leqslant{}
    & \frac{C_{q,\varrho}}{|z| - \norm{\mathcal{A}_h}_{\mathcal{L}(X_{0,h}^q)}} + 
    \frac{1}{|z| - \norm{\widetilde A_q}_{\mathcal{L}(X_0^q)}} \\
    \leqslant{} & C_{\theta,q,\varrho,\mathcal{O}} h^2,
\end{align*}
where the third and fourth inequalities follow from \eqref{eq:7000}, the fact that $|z| \geqslant c^*h^{-2}$,
and \cite[Equation~(1.11)]{Yagi2010}.
Here, we recall that $ c^* $ is a positive constant that depends solely on $ q $, $ \varrho $, and the geometric characteristics of $ \mathcal O $.
Consequently, given any $\phi \in \mathcal{H}_0^\infty(\Sigma_\theta)$,
from \cref{eq:I3} we deduce that
\begin{align*}
  \norm{I_3(\phi)}_{\mathcal{L}(X_0^q)} 
    &\leqslant 
    C_{\theta,q,\varrho,\mathcal{O}} \norm{\phi}_{\mathcal{H}_0^\infty(\Sigma_\theta)}
    h^{-2} \times \\
    & \qquad \max_{z \in \frac{c^*}{h^2} + i\left(-\frac{c^*}{h^2}\tan\theta,\frac{c^*}{h^2}\tan\theta\right)} 
    \norm{(z-\mathcal{A}_h)^{-1}P_h - (z - \widetilde A_q)^{-1}}_{\mathcal{L}(X_0^q)} \\
    &\leqslant
    C_{\theta,q,\varrho,\mathcal{O}} \norm{\phi}_{\mathcal{H}_0^\infty(\Sigma_\theta)}.
\end{align*}
This completes the proof of \eqref{eq:I3-bound}.

\textbf{Step 5.}
For \(\phi \in \mathcal{H}_0^\infty(\Sigma_\theta)\),
we combine the estimates from \cref{eq:918,eq:I1+I2,eq:I3-bound} to establish  
\[
  \|\phi(\mathcal{A}_h)P_h - \phi(\widetilde{A}_q)\|_{\mathcal{L}(X_0^q)} 
  \leqslant C_{\theta,q,\varrho,\mathcal{O}} \|\phi\|_{\mathcal{H}_0^\infty(\Sigma_\theta)}.
\]  
In conjunction with the bound \eqref{eq:wtAq-Hinf}, this implies  
\[
  \|\phi(\mathcal{A}_h)P_h\|_{\mathcal{L}(X_0^q)} 
  \leqslant C_{\theta,q,\varrho,\mathcal{O}} \|\phi\|_{\mathcal{H}_0^\infty(\Sigma_\theta)}.
\]  
The identity \(P_hu_h = u_h\) for \(u_h \in X_{0,h}^q\) then yields the required bound \cref{eq:Ah-Hinf}.
This completes the proof of assertion (i).

  \textbf{Step 6.}
To prove assertion (ii), let $ t \in \mathbb{R} $ be fixed. For $ \epsilon \in (0,1) $, consider the holomorphic function defined on $ \Sigma_\theta $ as  
$$
\phi_{\epsilon,t}(z) := z^{it} \frac{z^\epsilon}{(1 + z)^{2\epsilon}},
$$  
where $ z^{it} $ uses the principal branch of the logarithm with arguments in $ (-\pi, \pi) $.  
By \cref{eq:786} and Cauchy's theorem, we deform the integration contour to obtain  
$$
\mathcal{A}_h^{it} = \frac{1}{2\pi i} \int_{\Gamma} z^{it}(z - \mathcal{A}_h)^{-1} \, \mathrm{d}z \quad \text{and} \quad \phi_{\epsilon,t}(\mathcal{A}_h) = \frac{1}{2\pi i} \int_{\Gamma} \phi_{\epsilon,t}(z)(z - \mathcal{A}_h)^{-1} \, \mathrm{d}z,
$$  
where $ \Gamma := \Gamma_1 \cup \Gamma_2 \cup \Gamma_3 $ and the orientation of $ \Gamma $ is consistent with those of $ \Gamma_1 $, $ \Gamma_2 $, and $ \Gamma_3 $; see \cref{fig:I1I2I3}. The uniform bound  
$$
\sup_{\epsilon \in (0,1)} \norm{\phi_{\epsilon,t}(\cdot)(\cdot - \mathcal{A}_h)^{-1}}_{L^\infty(\Gamma; \mathcal{L}(X_{0,h}^q))} < \infty,
$$  
together with pointwise convergence $ \lim_{\epsilon\to 0} \phi_{\epsilon,t}(z)(z - \mathcal{A}_h)^{-1} = z^{it}(z - \mathcal{A}_h)^{-1} $ on $ \Gamma \setminus \{0\} $, allows the application of Lebesgue's dominated convergence theorem to conclude  
$$
\lim_{\epsilon \to 0^+} \phi_{\epsilon,t}(\mathcal{A}_h) = \mathcal{A}_h^{it} \quad \text{in } \mathcal{L}(X_{0,h}^q).
$$  
Since $ \phi_{\epsilon,t} \in \mathcal{H}_0^\infty(\Sigma_\theta) $, estimate \eqref{eq:Ah-Hinf} yields  
$$
\norm{\phi_{\epsilon,t}(\mathcal{A}_h)}_{\mathcal{L}(X_{0,h}^q)} \leqslant C_{\theta,q,\varrho,\mathcal{O}} \sup_{z \in \Sigma_{\theta}} \Big| z^{it} \frac{z^{\epsilon}}{(1+z)^{2\epsilon}} \Big|
\leqslant C_{\theta,q,\varrho,\mathcal{O}} e^{\theta |t|}.
$$  
Thus, the desired inequality \cref{eq:Ah-BIP} follows. This completes the proof of assertion (ii).

  \textbf{Step 7.} Let us establish assertion (iii). Fix any \( \theta' \in (0,\theta) \).  
For any \( z \in \mathbb{C} \setminus \overline{\Sigma_\theta} \), define  
\[
\phi_z(\lambda) := \frac{z}{z-\lambda}, \quad \lambda \in \Sigma_{\theta'}.
\]  
It is straightforward to verify that \( \phi_z(\cdot): \Sigma_{\theta'} \to \mathbb{C} \) is analytic for all \( z \in \mathbb{C} \setminus \overline{\Sigma_\theta} \), and that  
\[
\sup_{z \in \mathbb{C} \setminus \overline{\Sigma_\theta}} \sup_{\lambda \in \Sigma_{\theta'}} |\phi_z(\lambda)| < \infty.
\]  
By assertion (i) of this theorem, we have  
\[
\norm{\phi(\mathcal{A}_h)}_{\mathcal{L}(X^q_{0,h})} \leqslant C_{\theta',q,\varrho,\mathcal{O}} \norm{\phi}_{\mathcal{H}_0^\infty(\Sigma_{\theta'})}, \quad \forall \phi \in \mathcal{H}_0^\infty(\Sigma_{\theta'}).
\]  
Furthermore, since \( X_{0,h}^q \) is a closed subspace of \( X_0^q \), it follows from Propositions 7.5.3 and 7.5.4 in \cite{HytonenWeis2017} that \( X_{0,h}^q \) inherits Pisier's contraction property, with the associated constants being independent of \( h \).  
Consequently, by Theorem 10.3.4 from \cite{HytonenWeis2017}, we deduce that the family  
\[
\bigl\{ \phi_z(\mathcal{A}_h) \mid z \in \mathbb{C} \setminus \overline{\Sigma_\theta} \bigr\}
\]  
is \(\mathcal{R}\)-bounded on \( X_{0,h}^q \), and its \(\mathcal{R}\)-boundedness constant is uniformly bounded with respect to \( h \).  
This establishes assertion (iii) of \cref{thm:Ah-Hinf}, thereby completing the proof of \cref{thm:Ah-Hinf}.

  \hfill$\blacksquare$\par

\subsection{Proof of \texorpdfstring{\cref{thm:DSMLP}}{}}
Having established that the boundedness of the $ H^\infty $-calculus for $ \mathcal{A}_h $ is independent of the spatial mesh size $ h $, as shown in \cref{thm:Ah-Hinf}, the discrete stochastic maximal $ L^p $-regularity estimate \cref{eq:DSMLP} follows directly from \cite[Theorem~3.5]{Neerven2012b}. For completeness, we provide a concise outline.

For any $ r > 0 $ and $ g_h \in L_{\mathbb{F}}^p(\Omega \times \mathbb{R}_{+}; \gamma(H, X_{0,h}^q)) $, define the operator
$$
(\mathcal{J}_h(r)g_h)(t) := \frac{1}{\sqrt{r}} \int_{\max(0,t-r)}^t g_h(s) \, \mathrm{d}W_H(s), \quad t \geqslant 0.
$$
Using \cite[Equation~(2.5)]{Neerven2012b} and Minkowski’s inequality, one verifies that
$$
\mathcal{J}_h(r) \in \mathcal{L}\big( L_{\mathbb{F}}^p(\Omega \times \mathbb{R}_{+}; \gamma(H, X_{0,h}^q)), L^p(\Omega \times \mathbb{R}_{+}; X_{0,h}^q) \big) \quad \text{for all } r > 0.
$$
Let $ \mathscr{I} := \{ \mathcal{J}_h(r) \mid r > 0 \} $ denote this family.
Since $ X_{0,h}^q $ is a closed subspace of $ L^q(\mathcal{O}) $, and the $ \gamma $-Fubini isomorphism theorem \cite[Theorem~9.4.8]{HytonenWeis2017} ensures the isomorphism $ \gamma(H,L^q(\mathcal{O})) \simeq L^q(\mathcal{O}; H) $, it follows from \cite[Theorem~3.1]{Neerven2012} that $ \mathscr{I} $ is $ \mathcal{R} $-bounded, with its $ \mathcal{R} $-boundedness constant independent of $ h $.

Next, recall from \cite[Proposition~7.1.4]{HytonenWeis2017} that the space $ X_0^q $ has type $ 2 $. Since $ X_{0,h}^q $ is a closed subspace of $ X_0^q $, it inherits this property, and the associated type $ 2 $ constant for $ X_{0,h}^q $ remains uniformly bounded with respect to $ h $; see \cite[p.~54]{HytonenWeis2017} for definitions.

Furthermore, by \cref{thm:Ah-Hinf}, for every $ \theta \in (0, \pi/2) $, the boundedness constant of the $ H^\infty(\Sigma_\theta) $-calculus for $ \mathcal{A}_h $ is independent of $ h $. It is standard that there exists $ r_0 > 0 $, independent of $ h $, such that $ \{ \lambda \in \mathbb{C} \mid |\lambda| \leqslant r_0 \} $ lies in the resolvent set of $ \mathcal{A}_h $.

Combining these results, the discrete stochastic maximal $ L^p $-regularity estimate \cref{eq:DSMLP} follows from \cite[Theorem~3.5]{Neerven2012b}. This completes the proof.

\subsection{Proof of \texorpdfstring{\cref{thm:conv}}{}}
\label{ssec:2} 
We begin with several technical estimates, which are stated in Lemmas~\ref{lem:Aq-Ah-alpha}--\ref{lem:key} below.
\begin{lemma}
  \label{lem:Aq-Ah-alpha}
  For any $ 0 < \alpha \leqslant 1 $ and $ q \in (1,\infty) $, we have
  \begin{equation}
    \label{eq:Aq-Ah-alpha}
    \norm{\mathcal{A}_h^{-\alpha}P_h - A_q^{-\alpha}}_{\mathcal L(X_0^q)} \leqslant
    C_{\alpha,q,\varrho,\mathcal O} h^{2\alpha}.
  \end{equation}
\end{lemma}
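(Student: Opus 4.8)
The plan is to reduce the fractional-power estimate \cref{eq:Aq-Ah-alpha} to the resolvent estimate of \cref{lem:z-Ah}, using the Dunford (Balakrishnan) representation of negative fractional powers as a contour integral of the resolvent. For $0<\alpha<1$ one writes, for a sector $\Sigma_\theta$ with $\theta\in(0,\pi/2)$ oriented as in the conventions,
\[
  \mathcal{A}_h^{-\alpha}P_h - A_q^{-\alpha}
  = \frac{1}{2\pi i}\int_{\partial\Sigma_\theta} z^{-\alpha}
  \bigl[(z-\mathcal{A}_h)^{-1}P_h - (z-A_q)^{-1}\bigr]\,\mathrm{d}z,
\]
where $z^{-\alpha}$ uses the principal branch; both operators are sectorial with $h$-uniform constants (by \cref{lem:Aq}(i) and \cref{lem:z-Ah-inv}), so the integrals converge and represent the stated fractional powers. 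Taking $\mathcal L(X_0^q)$-norms and inserting \cref{lem:z-Ah} gives the pointwise bound $\|(z-\mathcal{A}_h)^{-1}P_h-(z-A_q)^{-1}\|_{\mathcal L(X_0^q)}\le C_{\theta,q,\varrho,\mathcal O}h^2$ on the whole contour. This alone is not integrable against $|z|^{-\alpha}|\mathrm dz|$ near infinity, so the integral must be split.

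The key step is a two-regime estimate of the integrand. For $|z|\le h^{-2}$ one uses the crude bound $\le C h^2$ from \cref{lem:z-Ah}, and $\int_{|z|\le h^{-2}}|z|^{-\alpha}\,|\mathrm dz|\le C_\alpha h^{-2(1-\alpha)}$, contributing $C_{\alpha,q,\varrho,\mathcal O}h^{2\alpha}$. For $|z|\ge h^{-2}$ one instead uses the trivial bound coming from each resolvent separately together with \cref{lem:Ph}(i): $\|(z-\mathcal{A}_h)^{-1}P_h-(z-A_q)^{-1}\|_{\mathcal L(X_0^q)}\le C_{q,\varrho,\mathcal O}/(1+|z|)$ by \cref{lem:z-Ah-inv} and \cref{lem:Aq}(i); then $\int_{|z|\ge h^{-2}}|z|^{-\alpha}(1+|z|)^{-1}\,|\mathrm dz|\le C_\alpha h^{2\alpha}$, since $\alpha>0$. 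Summing the two regimes yields \cref{eq:Aq-Ah-alpha} for $\alpha\in(0,1)$. The endpoint case $\alpha=1$ is exactly \cref{lem:z-Ah} specialized to $z=0$ (equivalently, $\mathcal{A}_h^{-1}P_h-A_q^{-1}=\lim_{z\to0}[(z-\mathcal{A}_h)^{-1}P_h-(z-A_q)^{-1}]$), giving the $h^2$ bound directly; alternatively it follows from the identity $R_h=\mathcal{A}_h^{-1}P_hA_q$ as used in the proof of \cref{lem:z-Ah}.

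The main obstacle is purely a matter of bookkeeping rather than of ideas: one must check that the contour integral representation of $\mathcal{A}_h^{-\alpha}P_h$ is legitimate — i.e., that $\mathcal{A}_h$ (hence $\mathcal{A}_h$ restricted and post-composed with $P_h=I$ on $X_h$) is sectorial on $X_0^q$ with angle $<\theta$ and $h$-uniform bound, which is \cref{lem:z-Ah-inv} — and that $P_h$ commutes with the scalar integral so that $(z-\mathcal{A}_h)^{-1}P_h$ is genuinely the resolvent of the part of $\mathcal{A}_h$ acting on the closed subspace $X_h\subset X_0^q$ composed with the projection. Once these are in place, the split-contour estimate is routine. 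One should also note that the constant is uniform over $\alpha$ bounded away from $0$; the $\alpha\to0$ degeneration is harmless here since the statement fixes $\alpha>0$.
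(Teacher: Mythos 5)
Your proposal is correct and follows essentially the same route as the paper: the same Dunford-integral representation of $\mathcal{A}_h^{-\alpha}P_h - A_q^{-\alpha}$ over $\partial\Sigma_\theta$, the same two-regime bound $\min\{h^2,\,C/(1+|z|)\}$ on the integrand (via \cref{lem:z-Ah} for $|z|\leqslant h^{-2}$ and \cref{lem:z-Ah-inv}, \cref{lem:Aq}(i), \cref{lem:Ph}(i) for $|z|\geqslant h^{-2}$), and the same resulting integrals producing $h^{2\alpha}$. For $\alpha=1$ the paper uses the identity $\mathcal{A}_h^{-1}P_h - A_q^{-1} = (R_h - I)A_q^{-1}$ together with \cref{eq:Rh-conv-1}, which is equivalent to your observation that \cref{lem:z-Ah} at $z=0$ already gives the bound.
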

\begin{proof}
  The proof relies on \cref{lem:Ph,lem:z-Ah-inv,lem:Aq,lem:z-Ah}. 
  The case $\alpha = 1$ follows directly from the identity
  $\mathcal{A}_h^{-1} P_h - A_q^{-1} = (R_h-I)A_q^{-1}$, inequality \eqref{eq:Rh-conv-1}, and the equality
  $\norm{A_q^{-1}}_{\mathcal L(X_0^q,X_1^q)} = 1$.
  For $0 < \alpha < 1$, set $\theta = \pi/4$.
  Using the Dunford integral representation 
  \begin{align*}
    \mathcal{A}_h^{-\alpha}P_h - A_q^{-\alpha} &= \frac{1}{2\pi i}
    \int_{\partial\Sigma_\theta} \lambda^{-\alpha}
    \bigl[ (\lambda - \mathcal{A}_h)^{-1}P_h - (\lambda - A_q)^{-1} \bigr]
    \, \mathrm{d}\lambda,
  \end{align*}
  we estimate the norm as follows:
  \begin{align*}
    & \norm{\mathcal{A}_h^{-\alpha}P_h - A_q^{-\alpha}}_{\mathcal L(X_0^q)} \\
    \leqslant{} &
    \frac1{2\pi} \int_0^\infty 
    r^{-\alpha} \norm{(re^{i\theta}-\mathcal{A}_h)^{-1}P_h - (re^{i\theta}-A_q)^{-1}}_{\mathcal L(X_0^q)}
    \, \mathrm{d}r \\
    {} &
    \quad {}+ \frac1{2\pi} \int_0^\infty 
    r^{-\alpha} \norm{(re^{-i\theta}-\mathcal{A}_h)^{-1}P_h - (re^{-i\theta}-A_q)^{-1}}_{\mathcal L(X_0^q)}
    \, \mathrm{d}r.
  \end{align*}
  For any \(r \in (0, \infty)\), we have 
  \begin{align*}
  & \|(re^{\pm i\theta} - \mathcal{A}_h)^{-1} P_h - (re^{\pm i\theta} - A_q)^{-1}\|_{\mathcal{L}(X_0^q)} \\
  \leqslant{}
  & \|(re^{\pm i\theta} - \mathcal{A}_h)^{-1} P_h \|_{\mathcal{L}(X_0^q)} 
  + \|(re^{\pm i\theta} - A_q)^{-1}\|_{\mathcal{L}(X_0^q)} \\
  \leqslant{}
  &\frac{C_{q,\varrho,\mathcal{O}}}{1 + r},
  \end{align*}
  by Lemma \ref{lem:Ph}(i), inequality \eqref{eq:z-Ah-inv}, and Lemma \ref{lem:Aq}(i).
  Additionally, by \cref{lem:z-Ah}, we obtain
  \[
  \|(re^{\pm i\theta} - \mathcal{A}_h)^{-1} P_h - (re^{\pm i\theta} - A_q)^{-1}\|_{\mathcal{L}(X_0^q)} \leqslant C_{q,\varrho,\mathcal{O}} h^2.
  \]
  Combining these estimates, we get
  \begin{align*}
  \|\mathcal{A}_h^{-\alpha} P_h - A_q^{-\alpha}\|_{\mathcal{L}(X_0^q)}
  & \leqslant C_{q,\varrho,\mathcal{O}} \int_0^\infty r^{-\alpha}
  \min\{h^2, r^{-1}\} \, \mathrm{d}r  \\
  &= C_{q,\varrho,\mathcal O} \Big(
    \int_0^{h^{-2}} h^2r^{-\alpha} \, \mathrm{d}r
    + \int_{h^{-2}}^\infty r^{-\alpha-1} \, \mathrm{d}r
    \Big) \\
  & \leqslant C_{\alpha,q,\varrho,\mathcal{O}} h^{2\alpha}.
  \end{align*}
  This completes the proof of \eqref{eq:Aq-Ah-alpha} for \(0 < \alpha < 1\).
\end{proof}

  \begin{lemma}
    \label{lem:lxy}
    Let $ \alpha \in (0,1] $ and $ q \in (1,\infty) $. Then,
    for any $ v_h \in X_h $, the following inequality holds:
    \[
      \norm{v_h}_{X_{-\alpha}^q} \leqslant
      C_{\alpha,q,\varrho,\mathcal O}
      \norm{v_h}_{X_{-\alpha,h}^q}.
    \]
  \end{lemma}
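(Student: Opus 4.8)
The plan is to pass to the dual characterization $\|v_h\|_{X_{-\alpha}^q}=\sup\bigl\{|\langle v_h,w\rangle|:\ w\in X_\alpha^{q'},\ \|w\|_{X_\alpha^{q'}}=1\bigr\}$ and to exploit the self-adjointness of $\mathcal{A}_h$ on $X_h$. Fix $v_h\in X_h$ and $w\in X_\alpha^{q'}$. Since $X_\alpha^{q'}\hookrightarrow X_0^{q'}=L^{q'}(\mathcal{O})$, the pairing is simply $\langle v_h,w\rangle=\int_{\mathcal{O}}v_h w\,\mathrm{d}x$, and because $P_h$ is the $L^2(\mathcal{O})$-orthogonal projection onto $X_h$ (extended to $X_0^{q'}$ as in Section~\ref{sec:pre}) this equals $\int_{\mathcal{O}}v_h\,(P_hw)\,\mathrm{d}x$. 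As $\mathcal{A}_h$ is symmetric and positive definite on $X_h$ with respect to the $L^2(\mathcal{O})$-inner product, so are its fractional powers $\mathcal{A}_h^{\pm\alpha}$, whence
\[
\langle v_h,w\rangle=\int_{\mathcal{O}}(\mathcal{A}_h^{-\alpha}v_h)(\mathcal{A}_h^{\alpha}P_hw)\,\mathrm{d}x ,
\]
and Hölder's inequality gives $|\langle v_h,w\rangle|\leqslant\|\mathcal{A}_h^{-\alpha}v_h\|_{X_0^q}\,\|\mathcal{A}_h^{\alpha}P_hw\|_{X_0^{q'}}=\|v_h\|_{X_{-\alpha,h}^q}\,\|\mathcal{A}_h^{\alpha}P_hw\|_{X_0^{q'}}$. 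Taking the supremum over $w$, the lemma is reduced to the $h$-uniform operator bound $\|\mathcal{A}_h^{\alpha}P_h\|_{\mathcal{L}(X_\alpha^{q'},X_0^{q'})}\leqslant C_{\alpha,q,\varrho,\mathcal{O}}$.

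To prove this bound I would write $w=A_{q'}^{-\alpha}g$ with $g:=A_{q'}^{\alpha}w$, so that $\|g\|_{X_0^{q'}}=\|w\|_{X_\alpha^{q'}}$, and decompose, using $\mathcal{A}_h^{\alpha}\mathcal{A}_h^{-\alpha}P_h=P_h$ on $X_h$,
\[
\mathcal{A}_h^{\alpha}P_hw=\mathcal{A}_h^{\alpha}\bigl(P_hA_{q'}^{-\alpha}-\mathcal{A}_h^{-\alpha}P_h\bigr)g+P_hg .
\]
The term $P_hg$ is bounded by $C_{q,\varrho,\mathcal{O}}\|g\|_{X_0^{q'}}$ by the uniform $L^{q'}$-stability of $P_h$ (Lemma~\ref{lem:Ph}(i)). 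For the first term, note that $P_hA_{q'}^{-\alpha}-\mathcal{A}_h^{-\alpha}P_h=(P_h-I)A_{q'}^{-\alpha}-(\mathcal{A}_h^{-\alpha}P_h-A_{q'}^{-\alpha})$ takes values in the finite-dimensional space $X_h$, and
\[
\|P_hA_{q'}^{-\alpha}-\mathcal{A}_h^{-\alpha}P_h\|_{\mathcal{L}(X_0^{q'})}\leqslant\|I-P_h\|_{\mathcal{L}(X_\alpha^{q'},X_0^{q'})}\|A_{q'}^{-\alpha}\|_{\mathcal{L}(X_0^{q'},X_\alpha^{q'})}+\|\mathcal{A}_h^{-\alpha}P_h-A_{q'}^{-\alpha}\|_{\mathcal{L}(X_0^{q'})}\leqslant C_{\alpha,q,\varrho,\mathcal{O}}\,h^{2\alpha},
\]
by Lemma~\ref{lem:Ph}(ii), the identity $\|A_{q'}^{-\alpha}\|_{\mathcal{L}(X_0^{q'},X_\alpha^{q'})}=1$, and Lemma~\ref{lem:Aq-Ah-alpha} applied with exponent $q'$. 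Since this operator maps into $X_h$, applying $\mathcal{A}_h^{\alpha}$ and using the inverse-type bound $\|\mathcal{A}_h^{\alpha}\|_{\mathcal{L}(X_{0,h}^{q'})}\leqslant C_{\alpha,q,\varrho,\mathcal{O}}\,h^{-2\alpha}$ from Lemma~\ref{lem:etAh}(ii), the two powers of $h$ cancel, giving $\|\mathcal{A}_h^{\alpha}(P_hA_{q'}^{-\alpha}-\mathcal{A}_h^{-\alpha}P_h)g\|_{X_0^{q'}}\leqslant C_{\alpha,q,\varrho,\mathcal{O}}\|g\|_{X_0^{q'}}$. Adding the two contributions yields $\|\mathcal{A}_h^{\alpha}P_hw\|_{X_0^{q'}}\leqslant C_{\alpha,q,\varrho,\mathcal{O}}\|w\|_{X_\alpha^{q'}}$, which is what remained to be shown.

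The mechanism is elementary once the right quantities are paired up; the step requiring the most care is the cancellation of the $h$-powers in the final estimate, which hinges on the exact matching of the exponent $h^{-2\alpha}$ coming from the inverse inequality for $\mathcal{A}_h^{\alpha}$ against the exponent $h^{2\alpha}$ coming from the fractional-power approximation estimate of Lemma~\ref{lem:Aq-Ah-alpha}, together with the (routine but essential) observation that $P_hA_{q'}^{-\alpha}-\mathcal{A}_h^{-\alpha}P_h$ has range in $X_h$, so that $\mathcal{A}_h^{\alpha}$ may legitimately be applied to it and is controlled by its $\mathcal{L}(X_{0,h}^{q'})$-norm.
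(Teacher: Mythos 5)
Your proof is correct, but it takes a genuinely different route from the paper's. The paper argues directly on the primal side: it writes $\norm{v_h}_{X_{-\alpha}^q}=\norm{A_q^{-\alpha}v_h}_{X_0^q}$, inserts $\mathcal{A}_h^{-\alpha}v_h$ by the triangle inequality, bounds the difference by $C_{\alpha,q,\varrho,\mathcal O}h^{2\alpha}\norm{v_h}_{X_0^q}$ via Lemma~\ref{lem:Aq-Ah-alpha} (using $P_hv_h=v_h$), and then absorbs this term with the inverse-type estimate $\norm{v_h}_{X_0^q}\leqslant C h^{-2\alpha}\norm{\mathcal{A}_h^{-\alpha}v_h}_{X_{0,h}^q}$ from Lemma~\ref{lem:etAh}(ii) --- three lines in total. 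You instead dualize, use the $L^2$-symmetry of $\mathcal{A}_h^{\pm\alpha}$ to shift the fractional power onto the test function, and reduce the lemma to the $h$-uniform bound $\norm{\mathcal{A}_h^{\alpha}P_h}_{\mathcal{L}(X_\alpha^{q'},X_0^{q'})}\leqslant C_{\alpha,q,\varrho,\mathcal O}$, which you then prove by a commutator decomposition. The two arguments are essentially dual to one another and rest on exactly the same cancellation: the $h^{2\alpha}$ from Lemma~\ref{lem:Aq-Ah-alpha} (you apply it with exponent $q'$, the paper with $q$) against the $h^{-2\alpha}$ from Lemma~\ref{lem:etAh}(ii). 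Your detour is longer but yields the reusable intermediate estimate on $\mathcal{A}_h^{\alpha}P_h$; the paper's version is the minimal one. All your steps check out --- in particular, the observation that $P_hA_{q'}^{-\alpha}-\mathcal{A}_h^{-\alpha}P_h$ has range in $X_h$, which legitimizes applying $\mathcal{A}_h^{\alpha}$ and controlling it by its $\mathcal{L}(X_{0,h}^{q'})$-norm, is precisely the point that needs care, and you handle it correctly.
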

  \begin{proof}
    We proceed as follows:
    \begin{align*}
      \norm{v_h}_{X_{-\alpha}^q} 
      &= \norm{A_q^{-\alpha}v_h}_{X_0^q} \\
      &\leqslant
      \norm{A_q^{-\alpha}v_h - \mathcal{A}_h^{-\alpha}v_h}_{X_0^q} +
      \norm{\mathcal{A}_h^{-\alpha}v_h}_{X_0^q} \\
      &\overset{\text{(i)}}{\leqslant}
      C_{\alpha,q,\varrho,\mathcal{O}} h^{2\alpha} \norm{v_h}_{X_0^q} +
      \norm{\mathcal{A}_h^{-\alpha}v_h}_{X_{0,h}^q} \\
      &\overset{\text{(ii)}}{\leqslant}
      C_{\alpha,q,\varrho,\mathcal{O}} \norm{\mathcal{A}_h^{-\alpha}v_h}_{X_{0,h}^q} \\
      &=
      C_{\alpha,q,\varrho,\mathcal{O}} \norm{v_h}_{X_{-\alpha,h}^q},
    \end{align*}
    where step (i) applies Lemma \ref{lem:Aq-Ah-alpha},
    and step (ii) utilizes Lemma \ref{lem:etAh}(ii).
    This establishes the desired inequality, thereby concluding the proof.
  \end{proof}

  \begin{lemma}
    \label{lem:key}
    Let $ p,q \in (1,\infty) $. Then, for any $ v_h \in X_h $, the following inequality holds:
    \[
      \norm{v_h}_{X_{1-1/p,h}^q} \leqslant
      C_{p,q,\varrho,\mathcal O}
      (1 + \abs{\ln h}^{1-1/p}) \norm{v_h}_{(X_{0,h}^q,X_{1,h}^q)_{1-1/p,p}}.
    \]
  \end{lemma}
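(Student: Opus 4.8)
The plan is to run the Balakrishnan representation of $\mathcal{A}_h^\theta$ with $\theta:=1-1/p\in(0,1)$. Since $\mathcal{A}_h$ is bounded and invertible on $X_h$ and the semigroup $(e^{-t\mathcal{A}_h})$ is exponentially stable (\cref{lem:etAh}), for every $v_h\in X_h$ one has
\[
\mathcal{A}_h^\theta v_h=\frac{1}{\Gamma(1-\theta)}\int_0^\infty t^{-\theta}\,\mathcal{A}_h e^{-t\mathcal{A}_h}v_h\,\mathrm{d}t .
\]
Writing $g(t):=t^{1-\theta}\norm{\mathcal{A}_h e^{-t\mathcal{A}_h}v_h}_{X_0^q}$, this yields $\norm{v_h}_{X_{1-1/p,h}^q}=\norm{\mathcal{A}_h^\theta v_h}_{X_0^q}\leqslant\Gamma(1-\theta)^{-1}\int_0^\infty g(t)\,\mathrm{d}t/t$, so the whole problem reduces to estimating this one integral. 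The two background facts I would invoke are: (i) the $h$-uniform resolvent and semigroup bounds of \cref{lem:z-Ah-inv,lem:etAh}, which make $\mathcal{A}_h$ sectorial with data independent of $h$; and (ii) the standard characterization of the real interpolation norm via the analytic semigroup (see, e.g., \cite[Chapter~4]{Lunardi2018}), which—because of (i) and because $0\in\rho(\mathcal{A}_h)$ uniformly—holds with an $h$-independent constant: $\norm{g}_{L^p((0,\infty),\,\mathrm{d}t/t)}\leqslant C_{p,q,\varrho,\mathcal{O}}\,\norm{v_h}_{(X_{0,h}^q,X_{1,h}^q)_{\theta,p}}$.

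I would then split $\int_0^\infty g(t)\,\mathrm{d}t/t=\int_0^{h^2}+\int_{h^2}^1+\int_1^\infty$ (one may assume $0<h\leqslant 1$). For the tail $\int_1^\infty$, \cref{lem:etAh}(i) with $\alpha=1$ gives $g(t)\leqslant C\,t^{-\theta}e^{-\delta t}\norm{v_h}_{X_0^q}$, so this piece is $\leqslant C_{p,q,\varrho,\mathcal{O}}\norm{v_h}_{X_{0,h}^q}$, and $\norm{v_h}_{X_{0,h}^q}\leqslant C_{p,q,\varrho,\mathcal{O}}\norm{v_h}_{(X_{0,h}^q,X_{1,h}^q)_{\theta,p}}$ by the $h$-uniform embedding $(X_{0,h}^q,X_{1,h}^q)_{\theta,p}\hookrightarrow X_{0,h}^q$, itself a consequence of $X_{1,h}^q\hookrightarrow X_{0,h}^q$ with norm $\norm{\mathcal{A}_h^{-1}}_{\mathcal{L}(X_{0,h}^q)}\leqslant C_{q,\varrho,\mathcal{O}}$ (take $z=0$ in \eqref{eq:z-Ah-inv}). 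For the middle piece, Hölder's inequality on $((h^2,1),\mathrm{d}t/t)$ with exponents $p$ and $p/(p-1)$ gives
\[
\int_{h^2}^1 g(t)\,\frac{\mathrm{d}t}{t}\leqslant\norm{g}_{L^p((0,\infty),\,\mathrm{d}t/t)}\,(2\abs{\ln h})^{1-1/p}\leqslant C_{p,q,\varrho,\mathcal{O}}\,\abs{\ln h}^{1-1/p}\,\norm{v_h}_{(X_{0,h}^q,X_{1,h}^q)_{\theta,p}},
\]
and this is the only place the logarithm is produced.

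The delicate piece is $\int_0^{h^2}g(t)\,\mathrm{d}t/t$: estimating $\norm{\mathcal{A}_h e^{-t\mathcal{A}_h}v_h}_{X_0^q}$ through $\norm{v_h}_{X_0^q}$ costs a factor $h^{-2\theta}$ and fails. Instead I would use $\norm{\mathcal{A}_h e^{-t\mathcal{A}_h}v_h}_{X_0^q}\leqslant\norm{e^{-t\mathcal{A}_h}}_{\mathcal{L}(X_{0,h}^q)}\norm{v_h}_{X_{1,h}^q}\leqslant C_{q,\varrho,\mathcal{O}}\norm{v_h}_{X_{1,h}^q}$, so that $\int_0^{h^2}g(t)\,\mathrm{d}t/t\leqslant C_{q,\varrho,\mathcal{O}}\,\tfrac{1}{1-\theta}\,h^{2(1-\theta)}\norm{v_h}_{X_{1,h}^q}=C_{p,q,\varrho,\mathcal{O}}\,h^{2/p}\norm{v_h}_{X_{1,h}^q}$, and then absorb the prefactor via the inverse-type interpolation inequality $\norm{v_h}_{X_{1,h}^q}\leqslant C_{q,\varrho,\mathcal{O}}\,h^{-2/p}\norm{v_h}_{(X_{0,h}^q,X_{1,h}^q)_{\theta,p}}$. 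The latter follows by interpolating, by exactness of the $K$-method, the two bounds $\norm{\mathrm{id}}_{\mathcal{L}(X_{0,h}^q,X_{1,h}^q)}=\norm{\mathcal{A}_h}_{\mathcal{L}(X_{0,h}^q)}\leqslant C_{q,\varrho,\mathcal{O}}h^{-2}$ (\cref{lem:etAh}(ii)) and $\norm{\mathrm{id}}_{\mathcal{L}(X_{1,h}^q)}=1$. Adding the three contributions gives the claim with constant $C_{p,q,\varrho,\mathcal{O}}$ and factor $1+\abs{\ln h}^{1-1/p}$. The main obstacle is exactly this small-time regime: one has to pivot from the $X_0^q$-estimate to the $X_{1,h}^q$-estimate and then recover control by the interpolation norm through the inverse inequality; everything else reduces to Hölder's inequality once the $h$-uniformity of the semigroup characterization of $(X_{0,h}^q,X_{1,h}^q)_{\theta,p}$ is established.
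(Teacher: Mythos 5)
Your proof is correct and follows essentially the same route as the paper's: both start from a semigroup integral representation of $\mathcal{A}_h^{1-1/p}v_h$, handle the regime $t<h^2$ with the inverse estimate $\norm{\mathcal{A}_h}_{\mathcal{L}(X_{0,h}^q)}\lesssim h^{-2}$, control the bulk via H\"older's inequality combined with the $h$-uniform trace-method characterization of $(X_{0,h}^q,X_{1,h}^q)_{1-1/p,p}$ (the factor $\abs{\ln h}^{1-1/p}$ arising in both cases from $\bigl(\int_{h^2}^{O(1)}t^{-1}\,\mathrm{d}t\bigr)^{1-1/p}$), and absorb the tail via the uniform embedding into $X_{0,h}^q$. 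The only difference is bookkeeping: the paper uses a single H\"older split on $(0,2)$ and places the inverse estimate inside the operator factor $\norm{\mathcal{A}_h^{1-1/p}e^{-\frac{t}{2}\mathcal{A}_h}}_{\mathcal{L}(X_{0,h}^q)}$, whereas you treat $(0,h^2)$ separately and recover the interpolation norm there through the exact-interpolation inequality $\norm{v_h}_{X_{1,h}^q}\lesssim h^{-2/p}\norm{v_h}_{(X_{0,h}^q,X_{1,h}^q)_{1-1/p,p}}$ --- equivalent devices with the same effect.
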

  \begin{proof}
    Starting from the identity  $ v_h = \int_{\mathbb{R}_+} \mathcal{A}_h e^{-t\mathcal{A}_h} v_h \, \mathrm{d}t $,  
    we deduce  
    $$
    \begin{aligned}
      \|v_h\|_{X_{1-1/p,h}^q}
      &\leqslant \int_{\mathbb{R}_+} \|\mathcal{A}_h^{2-1/p} e^{-t\mathcal{A}_h} v_h\|_{X_{0, h}^q} \, \mathrm{d}t \\
      &\leqslant \int_{0}^{2} \|\mathcal{A}_h^{2-1/p} e^{-t\mathcal{A}_h} v_h\|_{X_{0, h}^q} \, \mathrm{d}t
      + \int_{2}^{\infty} \|\mathcal{A}_h^{2-1/p} e^{-t\mathcal{A}_h}\|_{\mathcal{L}(X_{0,h}^q)}
      \|v_h\|_{X_{0, h}^q} \, \mathrm{d}t.
    \end{aligned}
    $$  
    By \cref{lem:etAh}(i), for all $ t > 0 $,  
    $$
    \begin{aligned}
      \|\mathcal{A}_h^{2-1/p}e^{-t\mathcal{A}_h}\|_{\mathcal{L}(X_{0,h}^q)}
      &= \|\mathcal{A}_h^{1-1/(2p)}e^{-\frac{t}{2}\mathcal{A}_h} \mathcal{A}_h^{1-1/(2p)}e^{-\frac{t}{2}\mathcal{A}_h}\|_{\mathcal{L}(X_{0,h}^q)} \\
      &\leqslant \|\mathcal{A}_h^{1-1/(2p)}e^{-\frac{t}{2}\mathcal{A}_h}\|_{\mathcal{L}(X_{0,h}^q)}^2 \\
      &\leqslant C_{p,q,\varrho,\mathcal{O}} \, t^{\frac{1}{p}-2},
    \end{aligned}
    $$  
    yielding  
    $$
    \int_{2}^{\infty} \|\mathcal{A}_h^{2-1/p}e^{-t\mathcal{A}_h}\|_{\mathcal{L}(X_{0,h}^q)}
    \|v_h\|_{X_{0,h}^q} \, \mathrm{d}t
    \leqslant C_{p,q,\varrho,\mathcal{O}} \|v_h\|_{X_{0,h}^q}.
    $$  
    Thus,  
    $$
    \|v_h\|_{X_{1-1/p,h}^q}
    \leqslant \int_{0}^{2} \|\mathcal{A}_h^{2-1/p} e^{-t\mathcal{A}_h} v_h\|_{X_{0, h}^q} \, \mathrm{d}t
    + C_{p,q,\varrho,\mathcal{O}} \|v_h\|_{X_{0,h}^q}.
    $$  
    The $h$-independent embedding $ (X_{0,h}^q, X_{1,h}^q)_{1-1/p,p} \hookrightarrow X_{0,h}^q $ (see \cref{rem:X0hq-X1hq})
    implies  
    $$
    \|v_h\|_{X_{1-1/p,h}^q}
    \leqslant \int_{0}^{2} \|\mathcal{A}_h^{2-1/p} e^{-t\mathcal{A}_h} v_h\|_{X_{0, h}^q} \, \mathrm{d}t
    + C_{p,q,\varrho,\mathcal{O}} \|v_h\|_{(X_{0,h}^q,X_{1,h}^q)_{1-1/p,p}}.
    $$  
    Therefore, it remains to prove  
    \begin{equation}
    \label{eq:tmp-task}
    \int_{0}^{2} \|\mathcal{A}_h^{2-1/p} e^{-t\mathcal{A}_h} v_h\|_{X_{0, h}^q} \, \mathrm{d}t
    \leqslant C_{p,q,\varrho,\mathcal{O}} \big(1 + |\ln h|^{1-1/p}\big)
    \|v_h\|_{(X_{0,h}^q,X_{1,h}^q)_{1-1/p,p}}.
    \end{equation}

    To this end, we proceed as follows:
    \begin{align*}
      & \int_0^{2} \normb{\mathcal{A}_h^{2-1/p}e^{-t\mathcal{A}_h}v_h}_{X_{0,h}^q} \, \mathrm{d}t \leqslant
      \int_{0}^{2} \normb{\mathcal{A}_h^{1-1/p} e^{-\frac{t}2\mathcal{A}_h}}_{\mathcal{L}(X_{0, h}^q)}
      \normb{\mathcal{A}_h e^{-\frac{t}2\mathcal{A}_h} v_h}_{X_{0, h}^q} \, \mathrm{d}t \\
      \leqslant{} 
      & \left(
        \int_{0}^{2} \normb{\mathcal{A}_h^{1-1/p} e^{-\frac{t}2\mathcal{A}_h}}_{\mathcal{L}(X_{0, h}^q)}^{\frac{p}{p-1}} \, \mathrm{d}t
      \right)^{1-\frac{1}{p}} \left(
        \int_{0}^{2} \normb{\mathcal{A}_h e^{-\frac{t}2\mathcal{A}_h} v_h}_{X_{0, h}^q}^p \, \mathrm{d}t
      \right)^{\frac{1}{p}},
    \end{align*}
    where the last inequality follows from Hölder's inequality.  
    For the first factor, we deduce that
    \begin{align*}
       \left( \int_0^{2} \norm{\mathcal{A}_h^{1-1/p} e^{-\frac{t}2\mathcal{A}_h}}_{\mathcal{L}(X_{0,h}^q)}^{\frac{p}{p-1}} \, \mathrm{d}t \right)^{1-\frac{1}{p}} 
      &\leqslant
       \left( \int_0^{h^2} \norm{\mathcal{A}_h^{1-1/p}}_{\mathcal L(X_{0,h}^q)}^{\frac{p}{p-1}}
      \norm{e^{-\frac{t}2\mathcal{A}_h}}_{\mathcal{L}(X_{0,h}^q)}^{\frac{p}{p-1}} \, \mathrm{d}t \right)^{1-\frac{1}{p}} \\
      & \quad {}+\left( \int_{\min(h^2,2)}^{2} \norm{\mathcal{A}_h^{1-1/p}e^{-\frac{t}2\mathcal{A}_h}}_{\mathcal L(X_{0,h}^q)}^{\frac{p}{p-1}} \, \mathrm{d}t \right)^{1-\frac{1}{p}} \\
      &\leqslant C_{p,q,\varrho,\mathcal{O}} \left[  \int_0^{h^2} h^{-2} \, \mathrm{d}t  +
      \int_{\min(h^2,2)}^{2} t^{-1} \mathrm{d}t  \right]^{1-\frac1p} \\
      &\leqslant C_{p,q,\varrho,\mathcal{O}} \big(1 + |\ln h|^{1-1/p}\big),
    \end{align*}
    where the second inequality uses \cref{lem:etAh}(ii) with $ \alpha = 1-1/p $ and \cref{lem:etAh}(i) with $ \alpha = 1-1/p $.
    For the second factor, by \cref{lem:etAh}(i), we may apply Proposition 6.2 of \cite{Lunardi2018} to obtain
    \begin{align*}
      \left( \int_0^{2} \normb{\mathcal{A}_h e^{-\frac{t}2\mathcal{A}_h} v_h}_{X_{0, h}^q}^p \, \mathrm{d}t \right)^{\frac{1}{p}}
      &= \left( 2\int_0^{1} \normb{\mathcal{A}_h e^{-t\mathcal{A}_h} v_h}_{X_{0, h}^q}^p \, \mathrm{d}t \right)^{\frac{1}{p}} \\
      & \leqslant C_{p, q, \varrho, \mathcal{O}} \norm{v_h}_{(X_{0, h}^q, X_{1, h}^q)_{1-1/p, p}}.
    \end{align*}
    Combining these estimates establishes \eqref{eq:tmp-task}. This completes the proof.
  \end{proof}

\begin{remark}
  \label{rem:X0hq-X1hq}
  Let $\alpha \in (0,1)$ and $p, q \in (1, \infty)$.
  For any $v_h \in X_h$, define $v := A_q^{-1} \mathcal{A}_h v_h$.
  Applying inequality~\eqref{eq:Aq-Ah-alpha} with $\alpha = 1$, we obtain
  \[
    \|v - v_h\|_{X_0^q} \leqslant C_{q,\varrho,\mathcal{O}} h^2 \|\mathcal{A}_h v_h\|_{X_0^q}.
  \]
  By the triangle inequality, the norm of $v_h$ satisfies
  \[
    \|v_h\|_{X_{0,h}^q} \leqslant \|v\|_{X_0^q} + C_{q,\varrho,\mathcal{O}} h^2 \|\mathcal{A}_h v_h\|_{X_0^q}.
  \]
  Using the relation $\|v\|_{X_1^q} = \|\mathcal{A}_h v_h\|_{X_0^q}$ and the continuous embedding $X_1^q \hookrightarrow X_0^q$, we deduce that
  \[
    \|v_h\|_{X_{0,h}^q} \leqslant C_{q,\varrho,\mathcal{O}} \|\mathcal{A}_h v_h\|_{X_0^q} = C_{q,\varrho,\mathcal{O}} \|v_h\|_{X_{1,h}^q}, \quad \forall v_h \in X_h.
  \]
  Consequently, invoking \cite[Proposition~1.3]{Lunardi2018}, we establish the inequality
  \[
    \|v_h\|_{X_{0,h}^q} \leqslant C_{\alpha,p,q,\varrho,\mathcal{O}} \|v_h\|_{(X_{0,h}^q, X_{1,h}^q)_{\alpha,p}}, \quad \forall v_h \in X_h.
  \]
\end{remark}

\begin{remark}
  We observe that the operator $ \mathcal{A}_h $ is symmetric and positive definite on the space $ X_{0,h}^2 $,
  and its smallest eigenvalue is uniformly bounded away from zero for all $h$.
  By \cite[Theorem~4.36]{Lunardi2018}, we are able to eliminate the logarithmic factor that is present in \cref{lem:key}
  for the specific case where $ p = q = 2 $. This result is a well-established one in the existing literature.
\end{remark}

Now we are ready to prove \cref{thm:conv}. The argument proceeds in three steps.

\textbf{Step 1 (Estimate for $y_h - P_hy$).}
From \cref{eq:y}, we have almost surely that
$$
y(t) + \int_0^t A_q y(s) \, \mathrm{d}s = \int_0^t f(s) \, \mathrm{d}W_H(s) \quad \text{in } X_{-1/2}^q \text{ for all } t \in [0,\infty).
$$
Applying the projection $ P_h $ to both sides yields
$$
\mathrm{d}P_h y(t) + P_h A_q y(t) \, \mathrm{d}t = P_h f(t) \, \mathrm{d}W_H(t), \quad t \geqslant 0.
$$
Subtracting this from \cref{eq:yh}, we obtain, almost surely,
$$
\mathrm{d}e_h(t) + \mathcal{A}_h e_h(t) \, \mathrm{d}t = (P_h A_q y - \mathcal{A}_h P_h y)(t) \, \mathrm{d}t, \quad t \geqslant 0,
$$
where $ e_h(t) := y_h(t) - P_h y(t) $ for all $ t \geqslant 0 $.
By Theorem 3.2 in \cite{Geissert2006}, for any $ \beta \in [0,1] $, it holds almost surely that
$$
\norm{\mathcal{A}_h^{-\beta} e_h'}_{L^p(\mathbb{R}_+; X_{0,h}^q)} + \norm{\mathcal{A}_h \mathcal{A}_h^{-\beta} e_h}_{L^p(\mathbb{R}_+; X_{0,h}^q)} \leqslant C_{p,q,\varrho,\mathcal{O}} \norm{\mathcal{A}_h^{-\beta}(P_h A_q y - \mathcal{A}_h P_h y)}_{L^p(\mathbb{R}_+; X_{0,h}^q)}.
$$
By \cite[Corollary~1.14]{Lunardi2018}, this implies that, almost surely, 
\begin{align*}
&\sup_{t \in \mathbb{R}_+} \norm{\mathcal{A}_h^{-\beta}e_h(t)}_{(X_{0,h}^q,X_{1,h}^q)_{1-1/p,p}}
+ \norm{\mathcal{A}_h^{1-\beta}e_h}_{L^p(\mathbb{R}_+; X_{0,h}^q)} \\
  \leqslant{} & C_{p,q,\varrho,\mathcal{O}} \norm{\mathcal{A}_h^{-\beta}(P_hA_q y - \mathcal{A}_h P_h y)}_{L^p(\mathbb{R}_+; X_{0,h}^q)}.
\end{align*}
For the right-hand side, we estimate as follows:
\[
  \begin{aligned}
& \norm{\mathcal{A}_h^{-\beta}(P_hA_q y - \mathcal{A}_h P_h y)}_{L^p(\mathbb{R}_+; X_{0,h}^q)} \\
    ={}& \norm{\mathcal{A}_h^{1-\beta} (R_h y - P_h y)}_{L^p(\mathbb{R}_+; X_{0,h}^q)}
       && \text{(since $\mathcal{A}_h^{-1} P_h A_q y = R_h y$)} \\
    ={}& C_{\beta,p,q,\varrho,\mathcal{O}} h^{2(\beta-1)} \norm{R_h y - P_h y}_{L^p(\mathbb{R}_+; X_{0,h}^q)}
       && \text{(by \cref{lem:etAh}(ii))}.
  \end{aligned}
\]
Using \cref{lem:Ph}(ii) with $ \alpha = 1/2 $ and \cref{lem:Rh}(ii), we further estimate
$$
\norm{\mathcal{A}_h^{-\beta}(P_h A_q y - \mathcal{A}_h P_h y)}_{L^p(\mathbb{R}_+; X_{0,h}^q)} \leqslant C_{\beta,p,q,\varrho,\mathcal{O}} h^{2\beta - 1} \norm{y}_{L^p(\mathbb{R}_+; X_{1/2}^q)}.
$$
Therefore, for any $ \beta \in [0,1] $, we obtain almost surely that
$$
\sup_{t \in \mathbb{R}_+} \norm{\mathcal{A}_h^{-\beta} e_h(t)}_{(X_{0,h}^q,X_{1,h}^q)_{1-1/p,p}} + \norm{\mathcal{A}_h^{1-\beta} e_h}_{L^p(\mathbb{R}_+; X_{0,h}^q)} \leqslant C_{\beta,p,q,\varrho,\mathcal{O}} h^{2\beta - 1} \norm{y}_{L^p(\mathbb{R}_+; X_{1/2}^q)}.
$$
It follows that, for any $\beta \in [0,1]$,
\begin{equation}
  \label{eq:Phy-yh}
  \begin{aligned}
   & \biggl[
     \mathbb E\sup_{t \in \mathbb R_{+}}
     \norm{\mathcal{A}_h^{-\beta}(y_h - P_hy)(t)}_{(X_{0,h}^q,X_{1,h}^q)_{1-1/p,p}}^p
   \biggr]^{1/p} 
   + \norm{\mathcal{A}_h^{1-\beta}(y_h - P_hy)}_{L^p(\Omega\times\mathbb R_{+};X_{0,h}^q)} \\
    \leqslant{} &
    C_{\beta,p,q,\varrho,\mathcal O}
    h^{2\beta-1} \norm{y}_{L^p(\Omega\times\mathbb R_{+};X_{1/2}^q)}.
  \end{aligned}
\end{equation}

\medskip\textbf{Step 2. Proof of the error estimate \eqref{eq:conv1}.}
By the triangle inequality, we have
\[
\norm{y - y_h}_{L^p(\Omega \times \mathbb{R}_+; X_0^q)}
\leqslant \norm{y - P_h y}_{L^p(\Omega \times \mathbb{R}_+; X_0^q)} + \norm{y_h - P_h y}_{L^p(\Omega \times \mathbb{R}_+; X_0^q)}.
\]
For the first term, \Cref{lem:Ph}(ii) with \( \alpha = 1/2 \) implies
\[
\norm{y - P_h y}_{L^p(\Omega \times \mathbb{R}_+; X_0^q)} \leqslant C_{q,\varrho,\mathcal{O}} \cdot h \norm{y}_{L^p(\Omega \times \mathbb{R}_+; X_{1/2}^q)}.
\]
For the second term, we apply estimate \eqref{eq:Phy-yh} with \( \beta = 1 \), yielding
\[
\norm{y_h - P_h y}_{L^p(\Omega \times \mathbb{R}_+; X_0^q)} \leqslant C_{p,q,\varrho,\mathcal{O}} \cdot h \norm{y}_{L^p(\Omega \times \mathbb{R}_+; X_{1/2}^q)}.
\]
Combining these two estimates gives
\[
\norm{y - y_h}_{L^p(\Omega \times \mathbb{R}_+; X_0^q)} \leqslant C_{p,q,\varrho,\mathcal{O}} \cdot h \norm{y}_{L^p(\Omega \times \mathbb{R}_+; X_{1/2}^q)}.
\]
To conclude the proof of \eqref{eq:conv1}, we invoke the regularity result
\begin{equation}
\label{eq:y-regu}
\begin{aligned}
\left[ \mathbb{E} \sup_{t \geqslant 0} \norm{y(t)}_{(X_0^q, X_1^q)_{1/2 - 1/p, p}}^p \right]^{1/p} + \norm{y}_{L^p(\Omega \times \mathbb{R}_+; X_{1/2}^q)} 
\leqslant C_{p,q,\mathcal{O}} \norm{f}_{L^p(\Omega \times \mathbb{R}_+; \gamma(H, X_0^q))},
\end{aligned}
\end{equation}
which follows from the fundamental work of van Neerven, Veraar, and Weis~\cite[Theorems 1.1 and 1.2]{Neerven2012}.

\medskip\textbf{Step 3 (Proof of Error Estimate \cref{eq:conv2}).}  
To establish the error estimate \cref{eq:conv2} for \( \alpha \in [0,1/p] \), we proceed as follows:
\[
\begin{aligned}
&\biggl[ \mathbb{E} \sup_{t \geqslant 0} \norm{(y_h - P_h y)(t)}_{X_{-\alpha}^q}^p \biggr]^{1/p} \\
  \stackrel{\text{(i)}}{\leqslant}{}& C_{\alpha,q,\varrho,\mathcal{O}} \biggl[ \mathbb{E} \sup_{t \geqslant 0} \norm{(y_h - P_h y)(t)}_{X_{-\alpha,h}^q}^p \biggr]^{1/p} \\
  ={} & C_{\alpha,q,\varrho,\mathcal{O}} \biggl[ \mathbb{E} \sup_{t \geqslant 0} \norm{\mathcal{A}_h^{-(1-1/p+\alpha)}(y_h - P_h y)(t)}_{X_{1-1/p,h}^q}^p \biggr]^{1/p} \\
  \stackrel{\text{(ii)}}{\leqslant}{}& C_{\alpha,p,q,\varrho,\mathcal{O}} (1 + |\ln h|^{1-1/p}) \biggl[ \mathbb{E} \sup_{t \geqslant 0} \norm{\mathcal{A}_h^{-(1-1/p+\alpha)}(y_h - P_h y)(t)}_{(X_{0,h}^q, X_{1,h}^q)_{1-1/p,p}}^p \biggr]^{1/p} \\
  \stackrel{\text{(iii)}}{\leqslant}{} & C_{\alpha,p,q,\varrho,\mathcal{O}} (1 + |\ln h|^{1-1/p}) h^{1+2(\alpha-1/p)} \norm{y}_{L^p(\Omega \times \mathbb{R}_+; X_{1/2}^q)}.
\end{aligned}
\]
Here, step (i) follows from Lemma \ref{lem:lxy}, step (ii) uses Lemma \ref{lem:key}, and step (iii) is derived from \cref{eq:Phy-yh} with \( \beta = 1-1/p+\alpha \).
Additionally, we have
\[
\biggl[ \mathbb{E} \sup_{t \geqslant 0} \norm{(y - P_h y)(t)}_{X_{-\alpha}^q}^p \biggr]^{1/p} \leqslant C_{\alpha,p,q,\varrho} h^{1+2(\alpha-1/p)} \biggl[ \mathbb{E} \sup_{t \geqslant 0} \norm{y(t)}_{(X_0^q, X_1^q)_{1/2-1/p,p}}^p \biggr]^{1/p},
\]
where we apply \cite[Theorem~1.6]{Lunardi2018} along with the standard bounds:
\[
\norm{I - P_h}_{\mathcal{L}(X_0^q, X_{-\alpha}^q)} \leqslant C_{\alpha,q,\varrho} h^{2\alpha}, \quad \norm{I - P_h}_{\mathcal{L}(X_1^q, X_{-\alpha}^q)} \leqslant C_{\alpha,q,\varrho} h^{2+2\alpha}.
\]
Combining these results with the regularity of \( y \) as stated in \cref{eq:y-regu}, we establish the desired pathwise uniform convergence estimate \cref{eq:conv2}. This completes the proof of Theorem \ref{thm:conv}.

\section{Concluding Remarks}
\label{sec:conclusion}
In this work, we have established a key property: the boundedness of the \( H^\infty \)-calculus for the negative discrete Laplacian is independent of the spatial mesh size.
This fundamental result has allowed us to prove a discrete stochastic maximal \( L^p \)-regularity estimate for semidiscrete finite element approximations
of a linear stochastic heat equation. Moreover, we have obtained a nearly optimal estimate for pathwise uniform convergence in the setting of general spatial \( L^q \)-norms.

The numerical analysis of nonlinear stochastic partial differential equations
within the context of general spatial \(L^q\)-norms is a theoretically significant
yet underexplored field. The theoretical advancements presented in this paper will
be useful for future numerical investigations in this area.

\end{document}